\documentclass[reqno]{amsart}

\usepackage[utf8]{inputenc} 
\usepackage[T1]{fontenc}    
\usepackage{url}            
\usepackage{booktabs}       
\usepackage{amsfonts}       
\usepackage{nicefrac}       
\usepackage{microtype}      
\usepackage[]{xcolor}         

\usepackage{amsmath,amssymb,amsfonts,mathrsfs}
\usepackage{nicefrac}
\usepackage{algorithmic}
\usepackage{algorithm}
\usepackage{graphicx}

\usepackage{multirow}

\usepackage{wrapfig}

\usepackage{booktabs}

\usepackage{tikz}
\usetikzlibrary{positioning, arrows.meta, calc, matrix}

\usepackage{xcolor}

\definecolor{strblue}{HTML}{0F1ED2}
\definecolor{strred}{HTML}{E61E8C}

\usepackage{natbib}

\usepackage{comment}

\usepackage{aliascnt}
\usepackage{hyperref}
\usepackage{cleveref}

\theoremstyle{definition}

\newtheorem{theorem}{Theorem}[section]
\crefname{theorem}{Theorem}{Theorems}
\Crefname{theorem}{Theorem}{Theorems}

\newaliascnt{definition}{theorem}
\newtheorem{definition}[definition]{Definition}
\aliascntresetthe{definition}
\crefname{definition}{Definition}{Definitions}
\Crefname{definition}{Definition}{Definitions}

\newaliascnt{proposition}{theorem}
\newtheorem{proposition}[proposition]{Proposition}
\aliascntresetthe{proposition}
\crefname{proposition}{Proposition}{Propositions}
\Crefname{proposition}{Proposition}{Propositions}

\newaliascnt{lemma}{theorem}
\newtheorem{lemma}[lemma]{Lemma}
\aliascntresetthe{lemma}
\crefname{lemma}{Lemma}{Lemmas}
\Crefname{lemma}{Lemma}{Lemmas}

\newaliascnt{corollary}{theorem}
\newtheorem{corollary}[corollary]{Corollary}
\aliascntresetthe{corollary}
\crefname{corollary}{Corollary}{Corollaries}
\Crefname{corollary}{Corollary}{Corollaries}

\newaliascnt{remark}{theorem}
\newtheorem{remark}[remark]{Remark}
\aliascntresetthe{remark}
\crefname{remark}{Remark}{Remarks}
\Crefname{remark}{Remark}{Remarks}

\newaliascnt{example}{theorem}
\newtheorem{example}[example]{Example}
\aliascntresetthe{example}
\crefname{example}{Example}{Examples}
\Crefname{example}{Example}{Examples}

\newaliascnt{assumption}{theorem}
\newtheorem{assumption}[assumption]{Assumption}
\aliascntresetthe{assumption}
\crefname{assumption}{Assumption}{Assumptions}
\Crefname{assumption}{Assumption}{Assumptions}

\numberwithin{equation}{section}

\crefname{equation}{Equation}{Equations}
\crefname{figure}{Figure}{Figures}
\crefname{table}{Table}{Tables}
\crefname{algorithm}{Algorithm}{Algorithms}

\crefname{section}{}{}
\creflabelformat{section}{\S #2#1#3}
\crefname{subsection}{}{}
\creflabelformat{subsection}{\S #2#1#3}
\crefname{appendix}{Appendix}{Appendixes}

\DeclareMathOperator*{\argmin}{arg\,min} 
\DeclareMathOperator*{\argmax}{arg\,max} 
\DeclareMathOperator{\Conv}{Conv}

\DeclareMathOperator{\Proj}{Proj}

\title[Explicit Iteration Complexity of Exact inverse optimization for ILPs]{Explicit Iteration Complexity of Exact Data-Driven Inverse Optimization for Integer Linear Programs}

\author{%
  Akira Kitaoka	
}
\address{NEC Corporation, 1753 Shimonumabe, Nakahara-ku, Kawasaki, Kanagawa, Japan }
\email{akira-kitaoka@nec.com}

\keywords{
    inverse optimization problem, mathematical programming, integer linear programming, projected subgradient method, Graver basis, M-convexity, total unimodularity
}

\begin{document}

\begin{abstract}

A data-driven inverse optimization problem (DDIOP) is the problem of estimating the objective-function parameters (weights) that explain observed optimal-solution data, and it arises in many applications, including integer linear programming (ILP). It is known that, by applying gradient-based optimization methods to the suboptimality loss, the inverse optimization of ILPs can be solved exactly within finitely many oracle iterations, and that the required number of iterations is bounded as $T=O(1/\gamma(\ell_{\mathrm{sub}})^2)$ in terms of a problem-dependent geometric constant $\gamma(\ell_{\mathrm{sub}})$. However, no means of bounding $\gamma(\ell_{\mathrm{sub}})$ from below as a function of the problem size has been available, and hence the number of iterations could not be given as an explicit function of the problem size. We therefore give, when the forward problem is an integer linear program (ILP), the number of iterations sufficient for projected subgradient descent applied to the suboptimality loss to achieve exact consistency with the observed data, as a fully explicit function of the number of samples, the dimension of the features, the ranges of the features, and the structure of the constraint coefficient matrix, up to polynomial factors in the basic constants (the diameter of the weight set, the step-size parameter, and the Lipschitz constant of the suboptimality loss).

\end{abstract}

\maketitle

\textit{AMS subject classifications 2020}: 90C90(primary), 90C25, 90C52, 90C11, 90C05, 68Q25 (secondary)

\section{Introduction}
Inverse optimization is the problem of estimating an objective function or its parameters from observed optimal solutions~\citep{ahuja2001inverse,heuberger2004inverse,chan2019inverse,chan2023inverse}, with a wide range of applications including transportation~\citep{bertsimas2015data}, power systems~\citep{birge2017inverse}, healthcare~\citep{chan2022inverse}, advertisement scheduling~\citep{Suzuki-2019-TV}, and inverse reinforcement learning~\citep{ng2000algorithms}.

In this paper, we treat the case where the objective function of the forward problem is given as a linear combination of known features: for each state $s\in\mathcal{S}$, given a feasible region $X(s)\subset\mathcal{X}$, a feature map $f=(f_1,\ldots,f_d)\colon\mathcal{X}\times\mathcal{S}\to\mathbb{R}^d$, and weights $\theta\in\Theta\subset\mathbb{R}^d$, the forward problem is
$x^*(\theta,s)\in \argmax_{x\in X(s)} \theta^\top f(x, s)$.
Given observed data $\mathcal{D}=\{(s^{(n)},x^{(n)})\}_{n=1}^N$, $x^{(n)}=x^*(\theta^*,s^{(n)})$, generated by unknown true weights $\theta^*$, the data-driven inverse optimization problem (DDIOP) asks for weights $\theta$ that explain every observed solution as an optimal solution (see \Cref{sec:problem_setup_and_algorithm} for the precise formulation).
In particular, setting $f(x, s)=x$, this framework includes mixed integer linear programs (MILPs).

To measure consistency with the observed data, we compare the observed features $a^{(n)}:=f(x^{(n)}, s^{(n)})$ with the predicted features $a^*(\theta,s^{(n)}):=f(x^*(\theta,s^{(n)}), s^{(n)})$.
A representative loss, the prediction loss of features (PLF)
$\ell_{\mathrm{plf}}(\theta):=\frac{1}{N}\sum_{n=1}^N \lVert a^*(\theta,s^{(n)})-a^{(n)} \rVert_2^2$
\citep{aswani2018inverse,chan2019inverse,babier2021ensemble,chan2023inverse,ferber2023surco,liang2023data},
is in general discontinuous in inverse optimization for MILPs because $a^*(\theta,s^{(n)})$ can vary discontinuously with respect to $\theta$, which makes direct gradient-based minimization difficult~\citep[cf.][]{Beck-2017-First,hazan2022introduction,garrigos2023handbook}.
In contrast, the suboptimality loss
$\ell_{\mathrm{sub}}(\theta):=\frac{1}{N}\sum_{n=1}^N ( \theta^\top a^*(\theta,s^{(n)}) - \theta^\top a^{(n)} )$,
introduced by \citet{Mohajerin-2018-Data}, is convex and Lipschitz continuous in our setting
\citep{Mohajerin-2018-Data,Barmann-2018-online,Kitaoka-2023-convergence-IRL},
and $\ell_{\mathrm{sub}}(\theta)=0$ is equivalent to every observed solution $x^{(n)}$ being optimal under the weights $\theta$, that is, to exact consistency with the observed data.
Consequently, first-order and online optimization methods---projected subgradient descent, multiplicative weights update, online Newton step, MetaGrad, and others---have been applied to the suboptimality loss, and regret and best-iterate analyses provide asymptotic guarantees that the loss decreases as the number of iterations $T$ grows
\citep{Barmann-2018-online,besbes2021online,besbes2025contextual,gollapudi2021contextual,sakaue2025online}.

Moreover, \citet{kitaoka2024exact} strengthened these asymptotic guarantees, using the geometric structure that the suboptimality loss is convex and piecewise linear and that its minimizer set has a relative interior point: gradient-based optimization methods reach $\ell_{\mathrm{sub}}=0$---that is, exact consistency with the observed data---within finitely many iterations, and the number of iterations is bounded from above via a problem-dependent geometric constant $\gamma(\ell_{\mathrm{sub}})>0$ (e.g., $T=O(1/\gamma(\ell_{\mathrm{sub}})^2)$ for projected subgradient descent).
However, $\gamma(\ell_{\mathrm{sub}})$ is an abstract constant determined by the geometry of the problem, and no means of bounding it from below as a function of the problem size was known. Consequently, the number of iterations sufficient for exact consistency could not be given as an explicit function of the problem size, such as the sample size $N$ and the feature dimension $d$.%

In this paper, when the forward problem is an integer linear program (ILP), we derive explicit lower bounds on $\gamma(\ell_{\mathrm{sub}})$ from the structure of integer programming (total unimodularity, Graver bases, and M-convexity/M$^\natural$-convexity).
Substituting them into the iteration upper bounds of \citet{kitaoka2024exact}, we give the number of iterations sufficient to achieve exact consistency with the observed data as a fully explicit function of the sample size $N$, the feature dimension $d$, the ranges of the features, and the structure of the constraint coefficient matrix, up to polynomial factors in the basic constants (the diameter of the weight set $\mathrm{diam}(\Theta)$, the step-size parameter $\beta$, and the Lipschitz constant of the suboptimality loss $L(\ell_{\mathrm{sub}})$).
The obtained lower bounds are polynomial in the dimension and independent of the ranges of the features for M-convex and M$^\natural$-convex sets, whereas for general ILPs they can become exponentially small in the dimension $d$. The latter is consistent with the NP-hardness of inverse optimization~\citep{aswani2018inverse}, and shows that, in our bounds, the exponential dependence is confined to the dimension $d$ (\Cref{rem:gamma_hardness_main}).%

\subsection*{Contributions}
The contributions of this paper are as follows.

\begin{itemize}
\item For data-driven inverse optimization whose forward problem is an integer linear program (ILP), under $\Theta=\Delta^{d-1}$ (\Cref{theo:gamma_lowerbound_on_simplex}), we give an explicit lower bound $\gamma(\ell_{\mathrm{sub}})\ge \frac{1}{N^{d}\max(d-1,\sqrt2)\,\lVert m\rVert_2^{d-1}}$ on the constant $\gamma(\ell_{\mathrm{sub}})$, where $m=(m_i)_i$ collects the ranges of the features and $N$ is the sample size.

\item When the constraints form a linear inequality system $A x\le b$, we give a lower bound $\gamma(\ell_{\mathrm{sub}})=\Omega\big(1/(N\,d^{(d+1)/2}(2C)^{d-1})\big)$ that is independent of the ranges of the features and determined solely by the $\ell_\infty$ norm $C=g_\infty(\widetilde A)$ of the Graver basis of the slack-augmented matrix $\widetilde A=[A\mid I]$ (in particular $C=1$ for totally unimodular matrices).

\item When the feasible region is an M-convex or M$^\natural$-convex set, we give a polynomial-in-dimension lower bound $\Omega\big(1/(N d^{2})\big)$ that is independent of the ranges of the features.

\item Substituting these lower bounds into the iteration upper bounds for gradient-based optimization methods~\citep{kitaoka2024exact}, we give the number of iterations sufficient to solve the data-driven inverse optimization of ILPs exactly as an explicit function of the problem size. The same substitution also yields an explicit bound on the number of iterations required to attain the minimum of the prediction loss of features (PLF).

\item We show that the general-ILP lower bounds above are essentially tight: we construct an explicit family of ILP instances on which $\gamma(\ell_{\mathrm{sub}})=\|m\|_\infty^{-\Omega(d)}$ (with $\|m\|_\infty=\max_i m_i$) is exponentially small in the dimension $d$ (\Cref{prop:tightness}). Hence the exponential dependence of the resulting iteration bound on $d$ is genuine rather than an artifact of a loose lower bound.
\end{itemize}

The remainder of this paper is organized as follows.
In \cref{sec:related_work}, we review related work.
In \cref{sec:problem_setup_and_algorithm}, we describe the DDIOP setting and the algorithm, which applies gradient-based optimization to the suboptimality loss.
In \cref{sec:main_results}, we state the main results.
In \cref{sec:comparison_tables}, we compare our bounds with existing methods; in \cref{sec:gamma_lowerbound_ILP_appx}, we give the lower bounds on the constant $\gamma(\ell_{\mathrm{sub}})$ together with their proofs; and in \cref{sec:iteration_bound_from_gamma}, we derive the iteration upper bounds.

\section{Related Work}\label{sec:related_work}

Research on inverse optimization has classically developed for combinatorial and network optimization~\citep{ahuja2001inverse,heuberger2004inverse}, and more recently the data-driven framework---which estimates an objective function or its parameters from observed data---has been actively studied from both theoretical and applied perspectives~\citep{chan2019inverse,chan2023inverse} (the applications and loss functions were reviewed in the introduction; for systematic surveys see \citet{chan2019inverse,chan2023inverse}, and for a detailed account of related work in the same setting as ours see \citet{kitaoka2024exact}). In this section, we focus on the work directly related to the contributions of this paper: explicit lower bounds on $\gamma(\ell_{\mathrm{sub}})$ and explicit evaluations of the number of iterations.

\paragraph{Existing guarantees for the suboptimality loss.}
For the convex suboptimality loss, regret and best-iterate analyses based on first-order and online optimization methods---projected subgradient descent, multiplicative weights update, online Newton step, and MetaGrad---have been carried out~\citep{Barmann-2018-online,besbes2021online,besbes2025contextual,gollapudi2021contextual,Kitaoka-2023-convergence-IRL,sakaue2025online} (for details of existing regret analyses see \Cref{sec:known_regret_analysis}, and for the offline implications see \Cref{tab:pro_con_SL_detail}); however, these guarantees remain asymptotic, of the form $\ell_{\mathrm{sub}}(\theta^t)\to 0$ or $\min_{1\leq t\leq T}\ell_{\mathrm{sub}}(\theta^t)\to 0$. \citet{kitaoka2024exact} strengthened these asymptotic guarantees into finite-step exact attainment, with an iteration upper bound via the constant $\gamma(\ell_{\mathrm{sub}})$, under the structure that the suboptimality loss is convex and piecewise linear and its minimizer set has a relative interior point.

\paragraph{Inverse optimization with discrete structure.}
The work most closely sharing our motivation is~\citet{oki2026finite}, which treats online inverse linear optimization where the feasible region is an \emph{M-convex set} (a fundamental class in discrete convex analysis that includes matroids~\citep[cf.][]{murota1996convexity,murota1998discrete,murota2003discrete}) and gives a regret bound $O(d\log d)$ independent of the number of iterations $T$; it lies on the same complexity axis as ours in that it obtains a polynomial-in-dimension complexity by exploiting discrete structure. This paper extends this axis: by deriving lower bounds on $\gamma(\ell_{\mathrm{sub}})$ from the structure of integer programming, such as total unimodularity and Graver bases~\citep{sturmfels1996grobner,onn2010nonlinear}, we broaden the scope to general ILPs, which contain M-convex and M$^\natural$-convex sets as special cases, and we guarantee finite-step attainment of exact consistency with the observed data ($\ell_{\mathrm{sub}}=0$), together with explicit iteration upper bounds, rather than the finiteness of the cumulative regret.
To avoid the discontinuity of the PLF or of the discrete optimal-solution map, approaches that smooth the problem by adding noise~\citep{berthet2020learning} or regularization~\citep{wilder2019melding}, or by using cutting planes and LP relaxations for MILPs~\citep{ferber2020mipaal,ferber2023surco}, have also been proposed; however, these generally solve an approximate problem and do not guarantee exact consistency for the original discrete problem in finitely many iterations (for an examination of the approximation errors and solution uniqueness of these methods, see \citealp{kitaoka2024exact}). Our framework does not use smoothing and is based on the geometric structure of the suboptimality loss itself.

\paragraph{Computational complexity and evaluation of the number of iterations.}
This paper sharpens the qualitative guarantee of attaining exact consistency in finitely many iterations into an explicit evaluation that quantifies the required number of iterations in accordance with the problem structure. Our iteration upper bound is described via the problem-dependent geometric constant $\gamma(\ell_{\mathrm{sub}})$, and for PSGD (SRSS, SRSL; defined in \Cref{exa:PSGD}) it takes the form $T=O(1/\gamma(\ell_{\mathrm{sub}})^2)$ up to polynomial factors. Whereas existing first-order/online studies rarely give the number of iterations required for exact consistency as an explicit function of the problem size, this paper, when the forward problem is an integer linear program (ILP), derives an explicit lower bound on $\gamma(\ell_{\mathrm{sub}})$ from the structure of integer programming (total unimodularity, the Graver basis~\citep{sturmfels1996grobner,onn2010nonlinear}, and M-convexity / M$^\natural$-convexity), and thereby evaluates the required number of iterations as an explicit function of the problem size. As a result, it is shown quantitatively that for a broad range of cases in which the structure can be exploited, or for a fixed dimension $d$ (including the totally unimodular case), the number of iterations is bounded by a polynomial in the problem size. In particular, for M-convex and M$^\natural$-convex sets, the lower bound on $\gamma(\ell_{\mathrm{sub}})$ is a polynomial in the dimension, $\Omega(1/(Nd^{2}))$, independent of the ranges of the features, and the number of iterations is bounded by $O(N^{2}d^{4})$ times $\mathrm{poly}(\mathrm{diam}(\Theta),\beta,L(\ell_{\mathrm{sub}}))$ (\Cref{sec:iteration_bound_from_gamma}). We also note that $\gamma(\ell_{\mathrm{sub}})$ can be upper-bounded via the SPO loss~\citep{Mohajerin-2018-Data,elmachtoub2022smart} (\citealp[Section~13.4]{kitaoka2024exact}); however, what is needed to evaluate the number of iterations is a lower bound on $\gamma(\ell_{\mathrm{sub}})$, which this paper provides, and the two are complementary.

Moreover, our explicit evaluation makes the possible worst-case behavior of the resulting bounds visible. In the worst case for general ILPs without structural assumptions, or for linear inequality constraints including the totally unimodular case, the lower bound on $\gamma(\ell_{\mathrm{sub}})$ that we give can become exponentially small in the dimension $d$, so the corresponding iteration upper bounds can grow exponentially in $d$. This dependence is consistent with the computational hardness of inverse optimization---\citet[Theorem 1]{aswani2018inverse} show NP-hardness of inverse optimization with noisy data. In this way, rather than claiming polynomial-time solvability in the general sense, this paper quantitatively separates the two regimes: it guarantees polynomial iteration bounds when the structure can be exploited, while in the general case our bounds leave room for exponential growth in the dimension $d$ (\Cref{rem:gamma_hardness_main}).

\paragraph{\texorpdfstring{Implications enabled by the explicit lower bounds on $\gamma(\ell_{\mathrm{sub}})$.}{Implications enabled by the explicit lower bounds.}}
Previously, $\gamma(\ell_{\mathrm{sub}})$ was an abstract constant guaranteed only to be positive (\Cref{prop:grad_based_opt_achieve_min_SL_before}), so the iteration upper bounds of \citet{kitaoka2024exact} remained qualitative statements of finite termination. Having lower bounds on $\gamma(\ell_{\mathrm{sub}})$ computable from the problem size newly enables the following considerations.
First, an iteration budget can be determined before running the algorithm. For instance, when the coefficient matrix is totally unimodular, the coefficient $C=g_\infty(\widetilde{A})$ determined by the linear-inequality constraint matrix (the $\ell_\infty$ norm of the Graver basis of the slack-augmented matrix $\widetilde{A}=[A\mid I]$) equals $1$, so under the assumptions of \Cref{sec:iteration_bound_from_gamma} the upper bounds in \Cref{tab:iteration_upperbound} are determined solely by the sample size $N$, the dimension $d$, and the basic constants $\mathrm{diam}(\Theta)$, $\beta$, and $L(\ell_{\mathrm{sub}})$; they thus serve as an a priori stopping criterion---running this many iterations guarantees exact consistency---and as an estimate of the computational budget.
Second, comparisons with asymptotic guarantees become comparisons between explicit functions of the problem size. The finite-step guarantee ``$0$ if $T\geq\cdots$'' in \Cref{tab:pro_con_SL_detail} could not previously be compared with regret-type guarantees on the same footing because $\gamma(\ell_{\mathrm{sub}})$ was unknown; substituting the lower bounds places both on the same problem-size scale.
Third, the computational value of discrete structure can be quantified. An M$^\natural$-convex set can also be described by a system of linear inequalities, but the lower bound $\Omega(1/(Nd^{2}))$ obtained directly from M$^\natural$-convexity is far better than the bound $\Omega\big(1/(N\,d^{(d+1)/2}(2C)^{d-1})\big)$ obtained by treating the same set via the general theory for linear inequality constraints (\Cref{rem:linear_inequality_includes_Mnat}). That is, for one and the same feasible region, which structural information is used to evaluate $\gamma(\ell_{\mathrm{sub}})$ can make the resulting iteration estimate polynomial or exponential in the dimension, and this distinction becomes visible as a comparison of the lower bounds.

\paragraph{Relation to weak sharp minima.}
\label{sec:weak_sharp_minima}

The structure that is key to the finite-step attainment (\Cref{theo:grad_based_opt_achieve_min_SL})---that the set of minimizers has a relative interior point---is closely related to the notion of weak sharp minima~\citep{burke1993weak}, and finite termination of optimization algorithms under weak sharp minima has been studied classically~\citep{polyak1987introduction,ferris1991finite,burke1993weak}. For a detailed comparison between these finite-termination theories and the finite-step attainment of gradient-based methods for the nonsmooth suboptimality loss, see \citet{kitaoka2024exact}. The difference of the present paper lies not in whether finite termination occurs but in its quantification: whereas the classical finite-termination theory of weak sharp minima does not provide the number of iterations until termination as a function of the problem size, this paper derives explicit lower bounds on $\gamma(\ell_{\mathrm{sub}})$ from the structure of integer programming (Graver bases, total unimodularity, and M-convexity/M$^\natural$-convexity; \Cref{sec:gamma_lowerbound_ILP_appx,sec:iteration_bound_from_gamma}), thereby giving the required number of iterations as an explicit function of the problem size.

\paragraph{Positioning of this paper.}
In light of the above, this paper is positioned at the intersection of two streams of work: the asymptotic analysis of first-order/online optimization for the suboptimality loss, and the problem awareness of achieving exact consistency with the observed data in discrete inverse optimization including ILPs. The finite-time exact solvability of gradient-based methods for the suboptimality loss, together with the general iteration upper bound $T=O(1/\gamma(\ell_{\mathrm{sub}})^2)$ via the constant $\gamma(\ell_{\mathrm{sub}})$, was established by \citet{kitaoka2024exact}; this paper is responsible for bounding $\gamma(\ell_{\mathrm{sub}})$ from below in terms of the structure of ILPs. Its novelty is to derive explicit lower bounds on the constant $\gamma(\ell_{\mathrm{sub}})$ from the structure of integer programming (total unimodularity, Graver bases, and M-convexity/M$^\natural$-convexity), and thereby to give the number of iterations required for exact consistency as an explicit function of the problem size.

\section{Problem Setup and Algorithms}
\label{sec:problem_setup_and_algorithm}

For each state $s \in \mathcal{S}$, we define the optimal solution to the forward problem and its associated feature vector as follows:
\begin{equation}\label{eq:FOP_linear}
    x^* (\theta , s ) \in \argmax_{x \in X(s)} \theta^{\top} f ( x, s ) = \sum_{i=1}^d \theta_i f_i (x, s) ,
    \quad a^* (\theta , s ) = f (x^* (\theta , s ), s).
\end{equation}
Given the observed data $\mathcal{D} = \{ (s^{(n)}, x^{(n)}) \}_{n=1}^N$, the data-driven inverse optimization problem associated with the forward problem~\cref{eq:FOP_linear} is the problem of finding $\theta \in \Theta$ satisfying the following condition for all $n = 1, \ldots, N$:
\begin{equation}
    x^{(n)} \in \argmax_{x \in X(s^{(n)})} \theta^{\top} f ( x, s^{(n)} ) = \sum_{i=1}^d \theta_i f_i (x, s^{(n)}).
    \label{eq:IOP_linear}
\end{equation}

We impose the following assumptions to specify the problem setting studied in this paper.
\begin{assumption}
    \label{assu:WIRL}
    Let the nonempty set $\mathcal{S}$ be the state space, and let the nonempty set $\mathcal{X} \subset \mathbb{R}^{d_{\mathcal{X}}}$ be the decision space. Let $f = (f_1 , \ldots , f_d)\colon \mathcal{X}\times\mathcal{S} \to \mathbb{R}^d$ be a mapping such that each component $f_i$ is a piecewise linear function.
    Let the weight space $\Theta \subset \mathbb{R}^d$ be a bounded, closed, and convex set.
    For each state $s \in \mathcal{S}$, let the feasible region $X(s)$ be a finite union of bounded, closed, and convex polytopes that are subsets of $\mathcal{X}$.
    Let $\mathcal{D} := \left\{ \left( s^{(n)}, x^{(n)} \right) \right\}_{n=1}^N$ be a set of training data such that the samples $s^{(n)} \in \mathcal{S}$ are generated from an unknown distribution $\mathbb{P}_{\mathcal{S}}$, and there exists an unknown $\theta^* \in \Theta$ satisfying, for each $n =1 , \ldots , N$,
    $
       x^{(n)} = x^* ( \theta^* , s^{(n)})
    $.
\end{assumption}

As a technical condition, we further assume the following.
\begin{assumption}
    \label{assu:WIRL-uniqueness}
    For each $n=1,\ldots,N$, the feature $a^*(\theta^*, s^{(n)})$ is uniquely determined.
\end{assumption}

\cref{assu:WIRL-uniqueness} is a natural assumption, in view of the following statement.
\begin{lemma}[{\citealp[Lemma~3.3]{kitaoka2024exact}}]
    \label{lem:Psi_set_is_almost_Phi}
    Suppose \cref{assu:WIRL} holds and let $\Theta=\Delta^{d-1}\subset \mathbb{R}^d$.
    Then, for $\theta\in\Delta^{d-1}$ almost everywhere (with respect to the measure on $\Delta^{d-1}$ induced by the Lebesgue measure), the feature $a^*(\theta,s^{(n)})$ is uniquely determined for every $n=1,\ldots,N$.
\end{lemma}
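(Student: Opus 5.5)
Since each $X(s^{(n)})$ is a finite union of polytopes and each $f_i$ is piecewise linear, I would first reduce the forward problem for a fixed $n$ to a maximization over a finite set of candidate feature vectors. Split every polytope of $X(s^{(n)})$ into finitely many closed sub-polytopes on each of which $f(\cdot,s^{(n)})$ is affine. The images $f(P,s^{(n)})$ of these pieces are polytopes in $\mathbb{R}^d$. For every $\theta$, the maximum of the linear function $a\mapsto\theta^\top a$ over the union of these images is attained at some vertex of one of them. Let $V_n\subset\mathbb{R}^d$ be the finite set of all such vertices, over all pieces. Then $\max_{x\in X(s^{(n)})}\theta^\top f(x,s^{(n)})=\max_{v\in V_n}\theta^\top v$.

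The key step is to show that non-uniqueness of $a^*(\theta,s^{(n)})$ forces $\theta$ onto a hyperplane. Suppose two maximizers $x,x'$ have features $a\neq a'$. Both features lie in the face of maximizers of $\theta^\top$ over the union of the image polytopes, so both are convex combinations of maximizing points of $V_n$. If exactly one $v\in V_n$ attained the maximum, then every maximizing point of every image polytope would equal $v$. The reason is that the maximizing face of a polytope is the convex hull of its maximizing vertices. That would force $a=a'=v$, a contradiction.

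Hence there exist $v\neq w$ in $V_n$ with $\theta^\top(v-w)=0$. The bad set is therefore contained in
\[
\bigcup_{n=1}^N\ \bigcup_{v\neq w\in V_n}\{\theta\in\Delta^{d-1} : \theta^\top(v-w)=0\},
\]
which is a finite union of sets of the form $H\cap\Delta^{d-1}$, where $H$ is a hyperplane through the origin.

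It remains to show that each such set has measure zero in the $(d-1)$-dimensional measure on $\Delta^{d-1}$. The simplex spans the affine hyperplane $\{\mathbf 1^\top\theta=1\}$. The intersection $H\cap\{\mathbf 1^\top\theta=1\}$ is either an affine subspace of dimension at most $d-2$, which is null, or all of $\{\mathbf 1^\top\theta=1\}$. The latter happens exactly when $v-w$ is a multiple $c\mathbf 1$ with $c\neq0$ and $\theta^\top c\mathbf 1=0$ on the simplex. That is impossible, since $\mathbf 1^\top\theta=1$ there. So every piece is null, and a finite union of null sets is null, which proves the claim.

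I expect the main obstacle to be the reduction in the first step: justifying the finite vertex set $V_n$ and the statement that any non-unique maximizing feature forces two distinct maximizing elements of $V_n$. This is where the piecewise linearity of $f$ and the polytope structure of $X(s)$ are used. I would handle it via the face argument above, applied polytope by polytope.
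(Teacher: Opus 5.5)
Your proof is correct. Non-uniqueness of the optimal feature forces two distinct candidate vertices $v\neq w$ with $\theta^\top(v-w)=0$, and such a linear hyperplane cannot contain the affine hull $\{\mathbf 1^\top\theta=1\}$ of $\Delta^{d-1}$. The exceptional set is therefore a finite union of null sets of dimension at most $d-2$.

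The paper does not reprove this lemma; it imports it from \citet{kitaoka2024exact}. Your hyperplane argument is the natural one and uses the paper's own machinery (vertex sets and the maximum principle). One simplification is available: take the candidate set to be the vertex set $Y^{(n)}$ of $\Conv f(X(s^{(n)}),s^{(n)})$ (\Cref{defi:vertex}). Every maximizing feature lies in the maximizing face of this single polytope, which is the convex hull of its maximizing vertices. Your polytope-by-polytope face argument then collapses into one step.
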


In this paper, based on the following proposition, we apply a gradient-based optimization method to the suboptimality loss $\ell_{\mathrm{sub}}$ and minimize it.

\begin{proposition}[{\citealt[Proposition~3.1]{Barmann-2018-online}; \citealt[Lemma~4.8]{Kitaoka-2023-convergence-IRL}}]
    \label{prop:SL_is_Lipschitz_convex}
    Suppose that \cref{assu:WIRL} holds.
    Then the following statements hold:
    (A) the suboptimality loss $\ell_{\mathrm{sub}}$ is convex;
    (B) the suboptimality loss $\ell_{\mathrm{sub}}$ is Lipschitz continuous; and
    (C) a subgradient of the suboptimality loss $\ell_{\mathrm{sub}}$ at $\theta\in\Theta$ is given by
    $ g(\theta)
        :=\sum_{n=1}^N \bigl(a^*(\theta, s^{(n)}) - a^{(n)}\bigr) / N $.    
\end{proposition}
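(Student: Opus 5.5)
We prove the three claims of \Cref{prop:SL_is_Lipschitz_convex} by writing $\ell_{\mathrm{sub}}$ as an average of pointwise maxima of linear functions. For each $n$, define the value function
\[
\phi_n(\theta):=\max_{x\in X(s^{(n)})}\theta^\top f(x,s^{(n)}).
\]
The maximum is attained because $X(s^{(n)})$ is a finite union of compact polytopes and $f(\cdot,s^{(n)})$ is piecewise linear, hence continuous. Since $a^*(\theta,s^{(n)})$ is the feature vector of a maximizer, $\theta^\top a^*(\theta,s^{(n)})=\phi_n(\theta)$. This holds whichever maximizer is chosen, so $\ell_{\mathrm{sub}}$ is well defined. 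We then have
\[
\ell_{\mathrm{sub}}(\theta)=\frac1N\sum_{n=1}^N\bigl(\phi_n(\theta)-\theta^\top a^{(n)}\bigr).
\]

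\textbf{(A) Convexity.} Let $F_n:=\{f(x,s^{(n)}) : x\in X(s^{(n)})\}$. Then $\phi_n(\theta)=\sup_{a\in F_n}\theta^\top a$ is the support function of $F_n$. It is a pointwise supremum of linear functions of $\theta$, hence convex. Subtracting the linear term $\theta^\top a^{(n)}$ preserves convexity, and so does averaging over $n$.

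\textbf{(B) Lipschitz continuity.} The set $F_n$ is the continuous image of a compact set, so it is bounded; let $R:=\max_n\sup_{a\in F_n}\lVert a\rVert_2<\infty$. For any $\theta,\theta'$, choose a maximizer $a$ for $\theta$. Then
\[
\phi_n(\theta)-\phi_n(\theta')\le \theta^\top a-\theta'^\top a\le R\lVert\theta-\theta'\rVert_2 .
\]
By symmetry, $\phi_n$ is $R$-Lipschitz. Also $a^{(n)}\in F_n$, so $\lVert a^{(n)}\rVert_2\le R$. Hence $\ell_{\mathrm{sub}}$ is $2R$-Lipschitz.

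\textbf{(C) Subgradient.} Fix $\theta$ and any $\theta'$. Because $a^*(\theta,s^{(n)})$ is feasible for the maximization at $\theta'$,
\[
\phi_n(\theta')\ge\theta'^\top a^*(\theta,s^{(n)})=\phi_n(\theta)+(\theta'-\theta)^\top a^*(\theta,s^{(n)}).
\]
So $a^*(\theta,s^{(n)})\in\partial\phi_n(\theta)$. This is Danskin's argument, which here reduces to this one-line inequality. Subtracting $a^{(n)}$ and averaging over $n$ gives the subgradient inequality $\ell_{\mathrm{sub}}(\theta')\ge\ell_{\mathrm{sub}}(\theta)+g(\theta)^\top(\theta'-\theta)$.

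The only delicate point is the attainment and boundedness of the maximum, which the compactness of $X(s)$ and the continuity of $f$ in \Cref{assu:WIRL} provide. Beyond that, every step is routine.
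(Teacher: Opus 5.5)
Your proof is correct and is essentially the standard argument behind the cited results; the paper itself gives no proof beyond the references. In that argument each summand is the support function of $f(X(s^{(n)}),s^{(n)})$ minus a linear term, and feasibility of $a^*(\theta,s^{(n)})$ at $\theta'$ gives the subgradient inequality; note also that your constant $2R$ is coarser than the constant $L(\ell_{\mathrm{sub}})$ of \cref{eq:Lipschitz_SL} used later in the paper, but that sharper constant follows at once from your part (C), since applying the subgradient inequality at both points gives $\lvert\ell_{\mathrm{sub}}(\theta)-\ell_{\mathrm{sub}}(\theta')\rvert\le\max(\lVert g(\theta)\rVert,\lVert g(\theta')\rVert)\,\lVert\theta-\theta'\rVert\le L(\ell_{\mathrm{sub}})\,\lVert\theta-\theta'\rVert$.
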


\begin{wrapfigure}{r}{8cm}
    \vspace{-2\intextsep}
    \begin{minipage}{\linewidth}
    \begin{algorithm}[H]
        \caption{Minimization of the suboptimality loss \citep{kitaoka2024exact}}\label{alg:intention-WIRL-gradual-decay}
        \begin{algorithmic}[1]
            \STATE Initialize $\theta^1 \in \Theta$
            \FOR{$t = 1 , \ldots, T-1$}
                \STATE For each $n = 1 , \ldots , N$, solve for $x^*(\theta^t, s^{(n)})$
                \STATE $\theta^{t+1} \leftarrow \mathrm{update}_{t}\!\left(\{\theta^{t^\prime}\}_{t^\prime = 1}^t \mid \ell_{\mathrm{sub}}, g\right)$
            \ENDFOR
            \RETURN $\displaystyle \theta_{\mathrm{best}}^{T} \in \argmin_{\theta \in \{\theta^{t}\}_{t=1}^T} \ell_{\mathrm{sub}}(\theta)$
        \end{algorithmic}
    \end{algorithm}
    \end{minipage}
    \vspace{-\intextsep}
\end{wrapfigure}
Hereafter, when it is clear from the context,
$\min_{\Theta} \ell = \min_{\theta \in \Theta} \ell (\theta)$ and
$\argmin_{\Theta} \ell = \argmin_{\theta \in \Theta} \ell (\theta)$
are used as abbreviations.
Let $\ell \colon \Theta \to \mathbb{R}$ be a Lipschitz-continuous convex function, and let $\partial \ell$ denote the subdifferential of $\ell$.
Let $\nabla \ell \colon \Theta \to \mathbb{R}^d$ be any (sub)gradient selection satisfying, for every $\theta$, $\nabla \ell (\theta) \in \partial \ell (\theta)$.
For $t \in \mathbb{Z}_{\geq 1}$, let the update map be
$\mathrm{update}_{t} \colon \Theta^t \times \mathbb{R}^{t} \times (\mathbb{R}^{d})^{t} \to \Theta$.
Moreover, define
\[
    \begin{split}
        & \mathrm{update}_t \left( \{ \theta^{t^\prime} \}_{t^\prime =1}^t \mid \ell , \nabla \ell \right) 
         :=\mathrm{update}_{t} ( \{ \theta^{t^\prime} \}_{t^\prime = 1}^t , \{ \ell (\theta^{t^\prime}) \}_{t^\prime = 1}^t , \{ \nabla \ell (\theta^{t^\prime}) \}_{t^\prime = 1}^t )    
    \end{split}
\]
The resulting procedure is described in \cref{alg:intention-WIRL-gradual-decay}.

\begin{example}[Projected subgradient descent]
    \label{exa:PSGD}
    Let $\alpha_{t} \colon \mathbb{R}^d \times \mathbb{R} \times \mathbb{R}^{d} \to \mathbb{R}_{\geq 0}$ be a mapping, and refer to $\{ \alpha_{t} \}_{t \in \mathbb{Z}_{\geq 1}}$ as the \emph{learning rate}.
    Let $\Proj_{\Theta}$ denote the projection onto $\Theta$.
    We identify the mapping $\alpha_t$ with its value $\alpha_t ( \theta^t , \ell(\theta^t) , \nabla \ell(\theta^t))$ when no confusion arises.
    We define projected subgradient descent (PSGD) \citep[cf.][]{boyd2003subgradient,Beck-2017-First} as the update rule $\{ \mathrm{update}_t \}_t$ in \cref{alg:intention-WIRL-gradual-decay} by
    $
        \mathrm{update}_t \left(\{ \theta^{t^\prime} \}_{t^\prime =1}^t | \ell , \nabla \ell \right)
        = \Proj_{\Theta} \left( \theta^t - \alpha_{t}  \nabla \ell( \theta^t) \right) .
    $
    The learning rate $\alpha_{t}$ is said to be a \emph{nonsummable step size (NSS)} if there exists a sequence $\{\beta_{t} \}_{t} \subset \mathbb{R}$ such that
        \begin{equation}
            \label{eq:seq_nonsummbale}
            \beta_{t} > 0 ,
            \quad
            \lim_{T \to \infty} \frac{\sum_{t=1}^T \beta_{t}^2}{\sum_{t=1}^T \beta_{t}} = 0
        \end{equation}
    and $\alpha_{t} = \beta_{t}$.
    The learning rate $\alpha_{t}$ is said to be a \emph{square root step size (SRSS)} if, for some $\beta > 0$, it is an NSS learning rate with $\beta_{t} = \beta t^{-1/2}$.
    The learning rate $\alpha_{t}$ is said to be a \emph{nonsummable step length (NSL)} if, for some sequence $\{\beta_{t} \}_{t} \subset \mathbb{R}$ satisfying \cref{eq:seq_nonsummbale}, we set $\alpha_{t} = \beta_{t} \| \nabla \ell(\theta^t) \|^{-1}$ when $\nabla \ell(\theta^t) \not = 0$, and $\alpha_{t} = 0$ when $\nabla \ell(\theta^t)  = 0$.
    The learning rate $\alpha_{t}$ is said to be a \emph{square root step length (SRSL)} if, for some $\beta > 0$, it is an NSL learning rate with $\beta_{t}  = \beta t^{-1/2}$.
    The learning rate $\alpha_{t}$ is said to be \emph{Polyak} if the minimum value $\min_{\Theta} \ell$ is known and
    $ 
        \alpha_{t} = 
        \left( \ell (\theta^t) - \min_{\Theta} \ell \right)\| \nabla \ell (\theta^t ) \|^{-2}
    $
    holds.
\end{example}

We present below an example satisfying \cref{assu:WIRL}, together with an implementation example of \cref{alg:intention-WIRL-gradual-decay}.
\begin{example}
    \label{exa:mip}
    In \cref{assu:WIRL}, let
    $f(x, s) = x$ (i.e., the features are the decision variables themselves).
    Under this setting, \cref{eq:FOP_linear} becomes an MILP.
    Let the sequence $\{ \mathrm{update}_t \}_t$ be PSGD, and implement the projection $\Proj_{\Delta^{d-1}}$ onto $\Theta = \Delta^{d-1}$ as in \citet{Wang-13-projection}.
    Then one can implement \cref{alg:intention-WIRL-gradual-decay}.
    This implementation is identical to \citet[Algorithm~1]{Kitaoka-2023-convergence-IRL}.
\end{example}

\begin{remark}
In \cref{assu:WIRL}, it is more natural to require that the weight space $\Theta$ does not contain $0$; see \cref{remark:mip} for the rationale.
\end{remark}

\section{Main Results}
\label{sec:main_results}

\subsection{Finite-time exact solvability of DDIOP for MILP (gradient based optimization method)}
\label{sec:solve_MILP_DDIOP_grad_theory}

For gradient-based optimization methods, we make the following assumptions: namely, that the update rule depends only on the past sequence of iterates and (sub)gradients, and that a convergence rate in terms of the best iterate is guaranteed, respectively.

\begin{assumption}
    \label{assu:gradient_based_opt_independ_ell}
    For any $t \in \mathbb{Z}_{\geq 1}$, we assume that $\mathrm{update}_{t} \colon \Theta^t \times \mathbb{R}^{t} \times(\mathbb{R}^{d})^{t} \to \Theta$ does not depend on the second component space $\mathbb{R}^{t}$.
\end{assumption}

\begin{assumption}
    \label{assu:gradient_based_opt_Q}
    For any $L$-Lipschitz-continuous convex function $\ell \colon \Theta \to \mathbb{R}$, any initial point $\theta^1 \in \Theta$, and any $t \in \mathbb{Z}_{\geq 1}$, define the iterates inductively by
    $\theta^{t+1} = \mathrm{update}_t \left( \{ \theta^{t^\prime} \}_{t^\prime =1}^t \mid \ell ,\nabla \ell \right)$.
    Then there exists a nonincreasing function $Q_{L , \theta^1 } \colon \mathbb{R}_{> 0 } \to \mathbb{R}$ such that, for every $\varepsilon > 0$, if $T \geq Q_{L , \theta^1 } (\varepsilon)$, then for any $L$-Lipschitz-continuous convex function $\ell \colon \Theta \to \mathbb{R}$, the best iterate is $\varepsilon$-accurate, that is,
    $\min_{t=1 , \ldots , T} \ell (\theta^t) - \min_{\Theta} \ell \leq \varepsilon$.
\end{assumption}

Among PSGD methods, those with NSS or NSL learning rates satisfy \cref{assu:gradient_based_opt_independ_ell}, and explicit convergence rates for the best iterate are available \citep[Propositions~9.2 and~9.8]{kitaoka2024exact}.
In contrast, for PSGD with the Polyak learning rate, $\mathrm{update}_t$ depends on the second argument, i.e., the sequence of objective values $\{\ell(\theta^{t^\prime})\}_{t^\prime=1}^t$, and hence it does not satisfy \cref{assu:gradient_based_opt_independ_ell}.

Since $X(s^{(n)})$ is a finite union of polyhedra and $f$ is such that each component function $f_i$ is piecewise affine, it follows that $f\!\left(X(s^{(n)}), s^{(n)}\right)$ is also a finite union of polyhedra \citep[Propositions~10.5 and~10.14]{kitaoka2024exact}.
Let $Y^{(n)}$ denote the set of vertices of the finite union of polyhedra $f\!\left(X(s^{(n)}), s^{(n)}\right)$ (\Cref{defi:vertex}).
\begin{definition}[{\citealp[Definition~10.15]{kitaoka2024exact}}]
    \label{defi:vertex}
    Let $X$ be a finite union of polyhedra.
    A set $Y$ is said to be the vertex set of $X$ if $Y$ is the vertex set of the convex hull $\mathrm{Conv} X$.
\end{definition}
Under \cref{assu:WIRL-uniqueness}, by the maximum principle,
\begin{equation}
    a^* (\theta^* ,s^{(n)}) \in Y^{(n)}
    \label{eq:optimal_in_vertex}
\end{equation}
holds.
We define the Lipschitz constant $L (\ell_\mathrm{sub})$ and a positive constant $\gamma (\ell_\mathrm{sub})$ as follows:
\begin{align}
    L (\ell_\mathrm{sub}) & := \sup_{\substack{ a^n \in f (X (s^{(n)}), s^{(n)}), \\ n =1, \ldots ,N }} \left\| \frac{1}{N} \sum_{n=1}^N  ( a^n - a^{(n)} ) \right\|
    \left( \leq L(f) \mathrm{diam} (\mathcal{X}) \right)
    ,
    \label{eq:Lipschitz_SL} \\
    \gamma (\ell_\mathrm{sub}) & := \max_{\theta \in \Theta} \min_{( a^{n} )_n \in \prod_{n=1}^N Y^{(n)} \setminus \{ (a^{(n)} )_n \} } \frac{1}{N} \sum_{n=1}^N \theta^\top \left( a^{(n)} - a^n \right) .
    \label{eq:gamma_SL}
\end{align}
Here, $L(f)$ denotes the Lipschitz constant of $f$, and $\mathrm{diam} (\mathcal{X}) = \sup_{x, x^\prime \in \mathcal{X}} \| x - x^\prime \|$ denotes the diameter of $\mathcal{X}$.
For an intuitive explanation of the constant $\gamma (\ell_\mathrm{sub})$, see \citet[Section~6]{kitaoka2024exact}. The positivity of $\gamma (\ell_\mathrm{sub})$ follows from the following proposition.
\begin{proposition}[{\citealp[Proposition~7.4]{kitaoka2024exact}}]
    \label{prop:grad_based_opt_achieve_min_SL_before}
    Under \Cref{assu:WIRL,assu:WIRL-uniqueness}, the constant $\gamma(\ell_{\mathrm{sub}})$ defined in \cref{eq:gamma_SL} is positive.
\end{proposition}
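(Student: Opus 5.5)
We prove that $\gamma(\ell_{\mathrm{sub}})>0$ by exhibiting one $\theta\in\Theta$ for which the inner minimum in \cref{eq:gamma_SL} is positive. The natural candidate is $\theta=\theta^*$. Since the outer operation is a maximum over $\Theta$, it suffices to show
\[
\min_{(a^n)_n\in\prod_{n}Y^{(n)}\setminus\{(a^{(n)})_n\}}\frac1N\sum_{n=1}^N\theta^{*\top}\bigl(a^{(n)}-a^n\bigr)>0 .
\]
Before doing so, we check that the max and min are well defined. Each $Y^{(n)}$ is finite, because it is the vertex set of the polytope $\mathrm{Conv}\,f(X(s^{(n)}),s^{(n)})$, which is a polytope by \citet[Propositions~10.5 and~10.14]{kitaoka2024exact}. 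Hence the inner minimum ranges over a finite set, and as a function of $\theta$ it is a minimum of finitely many linear functions. It is therefore continuous, and its maximum over the compact set $\Theta$ is attained. If the finite set is empty, the minimum is $+\infty$ by convention and there is nothing to prove.

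\textbf{Step 1.} For each $n$, by \cref{eq:optimal_in_vertex} and \cref{assu:WIRL}, $a^{(n)}=a^*(\theta^*,s^{(n)})\in Y^{(n)}$ maximizes $\theta^{*\top}a$ over $f(X(s^{(n)}),s^{(n)})$. A linear function has the same maximum over a set and over its convex hull, so $a^{(n)}$ also maximizes $\theta^{*\top}a$ over $\mathrm{Conv}\,f(X(s^{(n)}),s^{(n)})$. Consequently $\theta^{*\top}(a^{(n)}-a^n)\ge0$ for every $a^n\in Y^{(n)}$.

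\textbf{Step 2 (strictness).} This is the main point. Suppose some vertex $a^n\in Y^{(n)}$ with $a^n\neq a^{(n)}$ satisfies $\theta^{*\top}a^n=\theta^{*\top}a^{(n)}$. A vertex of $\mathrm{Conv}$ of a set belongs to that set, so $a^n=f(x,s^{(n)})$ for some $x\in X(s^{(n)})$. This $x$ is then also an optimal solution under $\theta^*$, with feature $a^n\neq a^{(n)}$. That contradicts \cref{assu:WIRL-uniqueness}, which says the optimal feature is unique. Hence $\theta^{*\top}(a^{(n)}-a^n)>0$ for every $a^n\in Y^{(n)}\setminus\{a^{(n)}\}$.

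\textbf{Step 3 (summing).} Take any tuple $(a^n)_n$ different from $(a^{(n)})_n$. At least one index has $a^n\neq a^{(n)}$, and that term is strictly positive by Step 2. Every other term is nonnegative by Step 1. So the average is strictly positive for each tuple. Since there are finitely many tuples, the minimum is a positive number, and therefore $\gamma(\ell_{\mathrm{sub}})$ is at least this positive value.

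The only delicate point is Step 2: it needs vertices of the convex hull to be actual attained features, so that uniqueness can be invoked. This holds because $\mathrm{Conv}\,f(X,s)$ is a polytope whose vertices are extreme points, and extreme points of the convex hull of a set always lie in the set itself.
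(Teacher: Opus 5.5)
Your proof is correct. You evaluate the outer maximum at $\theta=\theta^*\in\Theta$ and use \Cref{assu:WIRL-uniqueness} to get $\theta^{*\top}(a^{(n)}-a^n)\ge0$, with strict inequality whenever $a^n\neq a^{(n)}$; this is the reasoning the paper relies on. The proposition itself is imported from \citet{kitaoka2024exact} without proof, but the same argument appears in Step~1 of the proof of \Cref{theo:gamma_lowerbound_on_ball}, which derives $\theta^{*\top}\sum_n b^n<\theta^{*\top}P$ for every tuple other than $(a^{(n)})_n$. Your extra checks are sound and fill in details the paper leaves implicit: finiteness of $Y^{(n)}$, attainment of the maximum over compact $\Theta$, and the fact that vertices of $\Conv f(X(s^{(n)}),s^{(n)})$ are attained features.
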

The theorem established by \citet{kitaoka2024exact}, on which the arguments of this paper rest, is the following.
\begin{theorem}[{\citealp[Theorem~4.3]{kitaoka2024exact}}]
    \label{theo:grad_based_opt_achieve_min_SL}
    Suppose that \cref{assu:WIRL,assu:gradient_based_opt_independ_ell,assu:gradient_based_opt_Q,assu:WIRL-uniqueness} hold.
    Then, \\
    if $T \geq Q_{L (\ell_\mathrm{sub}), \theta^1} (\gamma (\ell_\mathrm{sub}))$, we have $\min_{t=1 , \ldots , T} \ell_{\mathrm{sub}} (\theta^t ) = \min_{\Theta} \ell_{\mathrm{sub}} = 0$.
\end{theorem}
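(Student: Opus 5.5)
The plan is a contradiction argument. I will build an auxiliary convex function $\widetilde\ell$ that coincides with $\ell_{\mathrm{sub}}$, together with its subgradients, at every iterate where $\ell_{\mathrm{sub}}>0$, but whose minimum over $\Theta$ is at most $-\gamma(\ell_{\mathrm{sub}})$. Running the method on $\widetilde\ell$ then forces some iterate to have $\ell_{\mathrm{sub}}\le 0$.

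\textbf{Step 1: max-of-affine representation.} Since $a^*(\theta,s^{(n)})$ maximizes the linear function $\theta^\top a$ over $f(X(s^{(n)}),s^{(n)})$, whose convex hull has vertex set $Y^{(n)}$ (\Cref{defi:vertex}), I write
\[
\ell_{\mathrm{sub}}(\theta)=\max_{(a^n)_n\in\prod_n Y^{(n)}}\frac1N\sum_{n=1}^N\theta^\top\bigl(a^n-a^{(n)}\bigr).
\]
By \cref{eq:optimal_in_vertex}, the observed tuple $(a^{(n)})_n$ lies in $\prod_n Y^{(n)}$ and contributes the affine piece $\equiv 0$. I then define
\[
\widetilde\ell(\theta):=\max_{(a^n)_n\in\prod_n Y^{(n)}\setminus\{(a^{(n)})_n\}}\frac1N\sum_{n=1}^N\theta^\top\bigl(a^n-a^{(n)}\bigr),
\]
so that $\ell_{\mathrm{sub}}=\max(\widetilde\ell,0)$. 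This $\widetilde\ell$ is convex as a finite maximum of affine functions. Every slope has the form $\frac1N\sum_n(a^n-a^{(n)})$ with $a^n\in f(X(s^{(n)}),s^{(n)})$, so by \cref{eq:Lipschitz_SL} $\widetilde\ell$ is $L(\ell_{\mathrm{sub}})$-Lipschitz. Let $\hat\theta$ attain the maximum in \cref{eq:gamma_SL}. Then $\widetilde\ell(\hat\theta)\le-\gamma(\ell_{\mathrm{sub}})$, hence $\min_\Theta\widetilde\ell\le-\gamma(\ell_{\mathrm{sub}})$. By \Cref{prop:grad_based_opt_achieve_min_SL_before} this bound is strictly negative. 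Also $\min_\Theta\ell_{\mathrm{sub}}=0$, attained at $\theta^*$.

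\textbf{Step 2: the trajectories coincide.} Suppose, for contradiction, that $\ell_{\mathrm{sub}}(\theta^t)>0$ for all $t=1,\dots,T$. At such a point the zero piece is not active, so $\widetilde\ell(\theta^t)=\ell_{\mathrm{sub}}(\theta^t)$. Moreover, $\ell_{\mathrm{sub}}$ and $\widetilde\ell$ agree on a neighbourhood of $\theta^t$, so their subdifferentials at $\theta^t$ agree. In particular the oracle subgradient $g(\theta^t)$ of \Cref{prop:SL_is_Lipschitz_convex}(C) lies in $\partial\widetilde\ell(\theta^t)$, and I choose the selection $\nabla\widetilde\ell$ to equal $g$ at these points.

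By \Cref{assu:gradient_based_opt_independ_ell}, $\mathrm{update}_t$ uses only the past iterates and subgradients, not the function values. An induction on $t$ starting from the common $\theta^1$ therefore shows that the iterates generated on $\widetilde\ell$ coincide with those generated on $\ell_{\mathrm{sub}}$ for $t\le T$.

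\textbf{Step 3: conclude.} Since $Q_{L,\theta^1}$ is nonincreasing and $T\ge Q_{L(\ell_{\mathrm{sub}}),\theta^1}(\gamma(\ell_{\mathrm{sub}}))$, \Cref{assu:gradient_based_opt_Q} applied to the $L(\ell_{\mathrm{sub}})$-Lipschitz convex function $\widetilde\ell$ gives
\[
\min_{t\le T}\widetilde\ell(\theta^t)\le\min_\Theta\widetilde\ell+\gamma(\ell_{\mathrm{sub}})\le 0 .
\]
This contradicts $\widetilde\ell(\theta^t)=\ell_{\mathrm{sub}}(\theta^t)>0$ for all $t\le T$. Hence some $t\le T$ has $\ell_{\mathrm{sub}}(\theta^t)=0=\min_\Theta\ell_{\mathrm{sub}}$.

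The main obstacle is Step 2. It requires \Cref{assu:gradient_based_opt_Q} to hold uniformly over subgradient selections, so that we may use the selection inherited from $g$. It also requires a careful check that $g(\theta^t)$ is a genuine subgradient of $\widetilde\ell$ even when $a^*(\theta^t,s^{(n)})$ is not a vertex. I would handle the latter through the local equality $\ell_{\mathrm{sub}}=\widetilde\ell$ near $\theta^t$: an optimal $a^*$ is a convex combination of optimal vertices, so $g(\theta^t)$ lies in the convex hull of active slopes, none of which is the zero piece.
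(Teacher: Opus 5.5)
This paper gives no proof of its own (it quotes \citet{kitaoka2024exact}), and your argument is correct. It is the mechanism the setup is built around: dropping the observed piece gives a convex $L(\ell_{\mathrm{sub}})$-Lipschitz $\widetilde\ell$ with $\min_\Theta\widetilde\ell=-\gamma(\ell_{\mathrm{sub}})$ exactly, and \Cref{assu:gradient_based_opt_independ_ell} (with $\nabla\ell$ an arbitrary subgradient selection in \Cref{assu:gradient_based_opt_Q}) makes the two trajectories indistinguishable until $\ell_{\mathrm{sub}}$ vanishes. The only unaddressed case is $\prod_n Y^{(n)}\setminus\{(a^{(n)})_n\}=\emptyset$, where $\widetilde\ell$ is undefined but $\ell_{\mathrm{sub}}\equiv0$, so the claim is trivial.
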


From \cref{lem:Psi_set_is_almost_Phi} and \cref{theo:grad_based_opt_achieve_min_SL}, we obtain the following corollary.

\begin{corollary}[{\citealp[Corollary~4.4]{kitaoka2024exact}}]
    \label{cor:grad_based_opt_achieve_min_SL_on_ps}
    Suppose that \cref{assu:WIRL,assu:gradient_based_opt_independ_ell,assu:gradient_based_opt_Q} hold.
    Let $\Theta = \Delta^{d-1}$.
    Then, for Lebesgue-almost every $\theta^* \in \Delta^{d-1}$ (with respect to the measure on $\Delta^{d-1}$ induced by Lebesgue measure), the following holds:
    if $T \geq Q_{L (\ell_\mathrm{sub}), \theta^1} (\gamma (\ell_\mathrm{sub}))$, then
    $\min_{t=1 , \ldots , T} \ell_{\mathrm{sub}} (\theta^t ) = \min_{\Theta} \ell_{\mathrm{sub}} = 0$.
\end{corollary}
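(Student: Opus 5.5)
The plan is to combine \Cref{lem:Psi_set_is_almost_Phi} with \Cref{theo:grad_based_opt_achieve_min_SL}; the only work is to check that the uniqueness hypothesis \Cref{assu:WIRL-uniqueness} holds for almost every $\theta^*$.

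First, fix $\Theta=\Delta^{d-1}$ and the states $s^{(1)},\ldots,s^{(N)}$. These states are drawn from $\mathbb{P}_{\mathcal{S}}$ and do not depend on $\theta^*$. Apply \Cref{lem:Psi_set_is_almost_Phi} under \Cref{assu:WIRL}. It gives a Lebesgue-null set $E\subset\Delta^{d-1}$ such that, for every $\theta\in\Delta^{d-1}\setminus E$, the feature $a^*(\theta,s^{(n)})$ is uniquely determined for all $n=1,\ldots,N$. The lemma is stated for a generic $\theta$, so I will apply it with $\theta=\theta^*$.

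Second, take any $\theta^*\in\Delta^{d-1}\setminus E$ and generate the data by $x^{(n)}=x^*(\theta^*,s^{(n)})$. Then \Cref{assu:WIRL-uniqueness} holds for this data set. By \Cref{prop:grad_based_opt_achieve_min_SL_before}, the constant $\gamma(\ell_{\mathrm{sub}})$ in \cref{eq:gamma_SL} is positive, so $Q_{L(\ell_{\mathrm{sub}}),\theta^1}(\gamma(\ell_{\mathrm{sub}}))$ is well defined.

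Third, all hypotheses of \Cref{theo:grad_based_opt_achieve_min_SL} now hold: \Cref{assu:WIRL,assu:gradient_based_opt_independ_ell,assu:gradient_based_opt_Q} by assumption and \Cref{assu:WIRL-uniqueness} by the second step. The theorem gives $\min_{t\le T}\ell_{\mathrm{sub}}(\theta^t)=\min_\Theta\ell_{\mathrm{sub}}=0$ whenever $T\ge Q_{L(\ell_{\mathrm{sub}}),\theta^1}(\gamma(\ell_{\mathrm{sub}}))$. The minimum is $0$ because $\ell_{\mathrm{sub}}\ge0$ and $\ell_{\mathrm{sub}}(\theta^*)=0$.

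The only delicate point is measurability and the order of quantifiers. The exceptional set $E$ depends on the fixed sample $\{s^{(n)}\}$, and "almost every $\theta^*$" is meant for that fixed sample. Since \Cref{lem:Psi_set_is_almost_Phi} already provides a null exceptional set, no further argument is needed.
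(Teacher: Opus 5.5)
Your proposal is correct and follows the paper's route exactly. The paper obtains this corollary by using \Cref{lem:Psi_set_is_almost_Phi} to secure \Cref{assu:WIRL-uniqueness} for almost every $\theta^*$, and then invoking \Cref{theo:grad_based_opt_achieve_min_SL}; your added remarks on the positivity of $\gamma(\ell_{\mathrm{sub}})$ and on the exceptional set depending on the fixed sample are sound.
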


\begin{remark}
By strengthening the result of \cref{theo:grad_based_opt_achieve_min_SL}, one can, in the case $\Theta = \Delta^{d-1}$, replace the constant $\gamma (\ell_\mathrm{sub})$ by a constant depending only on $\{ X (s^{(n)} ) \}_n$, $f$, and $\Theta$ \citep[Theorem~12.3]{kitaoka2024exact}.
\end{remark}

\subsection{Concrete complexity bounds for PSGD}
\label{sec:solve_MILP_DDIOP_grad_eg}

In this section, by applying \Cref{theo:grad_based_opt_achieve_min_SL}, we bound the number of iterations required to solve the DDIOP associated with an MILP when incorporating a concrete gradient-based optimization method into the update rule of \Cref{alg:intention-WIRL-gradual-decay}.

Let $\mathrm{diam}(\Theta)$ denote the diameter of $\Theta$.
In what follows, we use the fact that each variant of PSGD satisfies \Cref{assu:gradient_based_opt_independ_ell} and admits an error bound for the best iterate \citep[Propositions~9.2 and~9.8]{kitaoka2024exact}.
Then, by \Cref{theo:grad_based_opt_achieve_min_SL}, the following results hold.

\paragraph{PSGD (SRSS)}
When the learning rate is SRSS, one can explicitly construct the function $Q_{L,\theta^1}$ (see \citealp[Proposition~9.2]{kitaoka2024exact}); hence, one can bound the number of iterations required to solve the DDIOP associated with an MILP as follows:
\begin{corollary}[{\citealp[Corollary~4.7]{kitaoka2024exact}}]
    \label{cor:convergence_rate_NSS_eg_mini}
    Under \Cref{assu:WIRL,assu:WIRL-uniqueness}, \Cref{alg:intention-WIRL-gradual-decay} with PSGD (SRSS) solves \Cref{eq:IOP_linear} exactly within at most 
    $
        \max \left( 1, \left( \frac{ \mathrm{diam}(\Theta)^2 + (1 + \log 2) \beta^2 L(\ell_{\mathrm{sub}})^2}{\beta \gamma (\ell_{\mathrm{sub}})}  \right)^2 - 2 \right)
    $
    iterations.
\end{corollary}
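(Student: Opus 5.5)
The plan is to combine \Cref{theo:grad_based_opt_achieve_min_SL} with an explicit best-iterate rate for PSGD (SRSS). First, I check the hypotheses. PSGD with $\alpha_t=\beta t^{-1/2}$ uses only the iterates and the subgradients, so \Cref{assu:gradient_based_opt_independ_ell} holds. \Cref{prop:SL_is_Lipschitz_convex} shows that $\ell_{\mathrm{sub}}$ is convex and Lipschitz, with constant $L(\ell_{\mathrm{sub}})$ from \cref{eq:Lipschitz_SL}, since every subgradient $g(\theta)$ has norm at most that supremum. It therefore remains to exhibit $Q_{L,\theta^1}$ as required by \Cref{assu:gradient_based_opt_Q}, and then to evaluate it at $\varepsilon=\gamma(\ell_{\mathrm{sub}})$.

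Second, I derive the best-iterate rate. Fix a minimizer $\theta^\ast$. Nonexpansiveness of $\Proj_\Theta$ gives
\[
\|\theta^{t+1}-\theta^\ast\|^2\le\|\theta^t-\theta^\ast\|^2-2\alpha_t\bigl(\ell(\theta^t)-\ell(\theta^\ast)\bigr)+\alpha_t^2L^2 .
\]
Summing over $t=1,\dots,T$ yields
\[
\min_{t\le T}\ell(\theta^t)-\min_\Theta\ell\le\frac{\mathrm{diam}(\Theta)^2+L^2\sum_t\alpha_t^2}{2\sum_t\alpha_t}.
\]
With $\alpha_t=\beta t^{-1/2}$, I use $\sum_{t=1}^T t^{-1}\le 1+\log T$ and $\sum_{t=1}^T t^{-1/2}\ge 2(\sqrt{T+1}-1)$, or comparable integral estimates. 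This gives a bound of the form $(\mathrm{diam}(\Theta)^2+\beta^2L^2(1+\log T))/(c\beta\sqrt{T})$.

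The main obstacle is the $\log T$ term. It has to be absorbed so that the final threshold is free of logarithms and matches the stated constant $(1+\log2)$ and the shift $-2$. I would follow \citet[Proposition~9.2]{kitaoka2024exact}, which I take as given. There the constant $(1+\log 2)$ arises from splitting the sum or restarting the averaging window at $T/2$. The tail $\sum_{t=\lceil T/2\rceil}^{T}t^{-1}\le 1+\log 2$ is uniform in $T$, while $\sum_{t=\lceil T/2\rceil}^{T}t^{-1/2}$ is of order $\sqrt{T+2}$, since it is bounded below by a constant multiple of $\sqrt{T+2}$. Summing the recursion only over that window, and using $\|\theta^{\lceil T/2\rceil}-\theta^\ast\|\le\mathrm{diam}(\Theta)$, gives error at most
\[
\frac{\mathrm{diam}(\Theta)^2+(1+\log2)\beta^2L^2}{\beta\sqrt{T+2}}.
\]
This produces $Q_{L,\theta^1}(\varepsilon)=\max\bigl(1,\bigl((\mathrm{diam}(\Theta)^2+(1+\log2)\beta^2L^2)/(\beta\varepsilon)\bigr)^2-2\bigr)$, which is nonincreasing in $\varepsilon$.

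Finally, I apply \Cref{theo:grad_based_opt_achieve_min_SL} with $\varepsilon=\gamma(\ell_{\mathrm{sub}})$, which is positive by \Cref{prop:grad_based_opt_achieve_min_SL_before}. For $T\ge Q_{L(\ell_{\mathrm{sub}}),\theta^1}(\gamma(\ell_{\mathrm{sub}}))$ the best iterate satisfies $\ell_{\mathrm{sub}}=0$. That is exactly condition \cref{eq:IOP_linear} for all $n$, which gives the stated iteration count.
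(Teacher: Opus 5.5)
Your proposal is correct and follows the paper's route. The paper likewise checks that PSGD (SRSS) satisfies \Cref{assu:gradient_based_opt_independ_ell}, takes the explicit best-iterate function $Q_{L,\theta^1}$ from \citet[Proposition~9.2]{kitaoka2024exact}, and evaluates it at $\varepsilon=\gamma(\ell_{\mathrm{sub}})$ in \Cref{theo:grad_based_opt_achieve_min_SL}. Your windowed sketch also reproduces the stated constants exactly: the window $\lceil T/2\rceil\le t\le T$ has at least $(T+1)/2$ terms, each at least $T^{-1/2}$, so $2\sum_{t=\lceil T/2\rceil}^{T}t^{-1/2}\ge (T+1)/\sqrt{T}\ge\sqrt{T+2}$, while $\sum_{t=\lceil T/2\rceil}^{T}t^{-1}\le 1+\log 2$.
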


\paragraph{PSGD (SRSL)}
When the learning rate is SRSL, one can explicitly construct the function $Q_{L,\theta^1}$ (see \citealp[Proposition~9.8]{kitaoka2024exact}); hence, one can bound the number of iterations required to solve the DDIOP associated with an MILP as follows:
\begin{corollary}[{\citealp[Corollary~4.9]{kitaoka2024exact}}]
    \label{cor:convergence_rate_NSL_eg_mini}
    Under \Cref{assu:WIRL,assu:WIRL-uniqueness}, \Cref{alg:intention-WIRL-gradual-decay} with PSGD (SRSL) solves \Cref{eq:IOP_linear} exactly within at most 
    $
        \max \left( 1, \left( \frac{ L(\ell_{\mathrm{sub}}) (\mathrm{diam}(\Theta)^2 + (1 + \log 2) \beta^2 )}{\beta \gamma (\ell_{\mathrm{sub}}) }  \right)^2 - 2 \right)
    $
    iterations.
\end{corollary}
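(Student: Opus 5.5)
This corollary is imported from \citet[Corollary~4.9]{kitaoka2024exact}, so the plan is to obtain it by substitution into \Cref{theo:grad_based_opt_achieve_min_SL}.

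\emph{Checking the hypotheses.} The theorem needs \Cref{assu:WIRL,assu:WIRL-uniqueness}, which are assumed, together with \Cref{assu:gradient_based_opt_independ_ell,assu:gradient_based_opt_Q}. For \Cref{assu:gradient_based_opt_independ_ell}, I would note that the SRSL update $\Proj_\Theta(\theta^t-\beta t^{-1/2}\nabla\ell(\theta^t)/\|\nabla\ell(\theta^t)\|)$ uses only the current iterate and subgradient, never loss values. For \Cref{assu:gradient_based_opt_Q}, I need an explicit best-iterate bound, which I derive next.

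\emph{Deriving $Q$ for SRSL.} Let $\theta^\star\in\argmin_\Theta\ell$ and $g_t=\nabla\ell(\theta^t)$. By nonexpansiveness of the projection,
\[
\|\theta^{t+1}-\theta^\star\|^2\le\|\theta^t-\theta^\star\|^2-2\beta_t\,\frac{g_t^\top(\theta^t-\theta^\star)}{\|g_t\|}+\beta_t^2 .
\]
Convexity gives $g_t^\top(\theta^t-\theta^\star)\ge\ell(\theta^t)-\min_\Theta\ell$. Lipschitz continuity gives $\|g_t\|\le L$. Combining these,
\[
\frac{2\beta_t}{L}\bigl(\ell(\theta^t)-\min_\Theta\ell\bigr)\le\|\theta^t-\theta^\star\|^2-\|\theta^{t+1}-\theta^\star\|^2+\beta_t^2 .
\]
If $g_t=0$ then $\theta^t$ is already a minimizer, so that case is trivial. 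Summing over $t=1,\dots,T$ gives
\[
\min_t\ell(\theta^t)-\min_\Theta\ell\le\frac{L\bigl(\mathrm{diam}(\Theta)^2+\sum_{t}\beta_t^2\bigr)}{2\sum_{t}\beta_t}.
\]
With $\beta_t=\beta t^{-1/2}$, I use $\sum_{t\le T}t^{-1}\le 1+\log T$ and $\sum_{t\le T}t^{-1/2}\ge 2(\sqrt{T+1}-1)$. The constant $(1+\log 2)$ in the statement suggests the authors' Proposition~9.8 (which I would simply cite) handles the logarithm more carefully. One option is to compare $\log$ growth against $\sqrt{T}$, or to use a refined comparison of the form $\sum_{t\le T} t^{-1}\le (1+\log2)\cdot(\text{something linear in }\sqrt{T+2})$. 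Either way the result is a bound of the form
\[
\min_t\ell(\theta^t)-\min_\Theta\ell\le\frac{L\bigl(\mathrm{diam}(\Theta)^2+(1+\log2)\beta^2\bigr)}{\beta\sqrt{T+2}} .
\]
Solving for $T$ at accuracy $\varepsilon$ gives the nonincreasing function
\[
Q_{L,\theta^1}(\varepsilon)=\max\Bigl(1,\bigl(L(\mathrm{diam}(\Theta)^2+(1+\log2)\beta^2)/(\beta\varepsilon)\bigr)^2-2\Bigr).
\]

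\emph{Conclusion.} Plug $L=L(\ell_{\mathrm{sub}})$ and $\varepsilon=\gamma(\ell_{\mathrm{sub}})$, which is positive by \Cref{prop:grad_based_opt_achieve_min_SL_before}, into \Cref{theo:grad_based_opt_achieve_min_SL}. This shows that some iterate attains $\ell_{\mathrm{sub}}=0$, i.e., it solves \cref{eq:IOP_linear} exactly.

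The only delicate step is the bookkeeping that absorbs the $\log T$ term into the constant $(1+\log2)$ with the $-2$ shift. I would defer that to \citet[Proposition~9.8]{kitaoka2024exact}.
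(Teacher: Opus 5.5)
Your overall route matches the paper's. The paper gives no argument beyond noting that PSGD (SRSL) satisfies \Cref{assu:gradient_based_opt_independ_ell} and has an explicit best-iterate function $Q_{L,\theta^1}$ \citep[Proposition~9.8]{kitaoka2024exact}, and then evaluating \Cref{theo:grad_based_opt_achieve_min_SL} at $L=L(\ell_{\mathrm{sub}})$, $\varepsilon=\gamma(\ell_{\mathrm{sub}})$. Read as a citation-based proof, your proposal is complete.

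The problem is your self-contained sketch of $Q$. The claim ``either way the result is a bound of the form $L(\mathrm{diam}(\Theta)^2+(1+\log2)\beta^2)/(\beta\sqrt{T+2})$'' is false. Telescoping over the full horizon $t=1,\dots,T$ gives only a bound of order $\log T/\sqrt{T}$, and neither of your options removes the logarithm. If you bound $\sum_{t\le T}t^{-1}$ by $(1+\log 2)\,c\sqrt{T+2}$, your right-hand side tends to the positive constant $c(1+\log2)L\beta/4$ rather than $0$, so it certifies no small accuracy at all. If you keep $\log T$ and compare it with $\sqrt{T}$, you get a $Q$ with an extra $\log^2(1/\varepsilon)$ factor, not the stated one.

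The missing idea is to telescope only over the second half of the run:
\begin{itemize}
\item Take $k=\lceil T/2\rceil$ and sum your one-step inequality over $t=k,\dots,T$.
\item Bound the starting term by $\|\theta^{k}-\theta^\star\|^2\le\mathrm{diam}(\Theta)^2$; this is where boundedness of $\Theta$ enters.
\item Use that $\min_{t\le T}$ is at most $\min_{k\le t\le T}$.
\end{itemize}
Since $\sum_{t=k}^{T}t^{-1}\le 1/k+\log(T/k)\le 1+\log 2$ and $\sum_{t=k}^{T}t^{-1/2}\ge 2(\sqrt{T+1}-\sqrt{k})\ge(2-\sqrt2)\sqrt{T+1}$, you obtain the log-free bound $L(\mathrm{diam}(\Theta)^2+(1+\log2)\beta^2)/\bigl(2(2-\sqrt2)\beta\sqrt{T+1}\bigr)$. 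This is at most the stated $L(\mathrm{diam}(\Theta)^2+(1+\log2)\beta^2)/(\beta\sqrt{T+2})$ once $T\ge2$. The constants $1+\log 2$ and $2-\sqrt2$ in \Cref{theo:achieve_min_PLF_PSGD_SRSS_readable,theo:achieve_min_PLF_PSGD_SRSL_readable} are characteristic of this window argument.

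One smaller point: ``Lipschitz continuity gives $\|g_t\|\le L$'' fails for an arbitrary subgradient selection on $\Theta=\Delta^{d-1}$. Adding any multiple of $\mathbf 1$ to a subgradient of a function on the simplex gives another subgradient. For the specific subgradient $g(\theta)$ of \Cref{prop:SL_is_Lipschitz_convex} that the algorithm uses, however, $\|g(\theta)\|\le L(\ell_{\mathrm{sub}})$ holds directly by definition \cref{eq:Lipschitz_SL}.
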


\subsection{Extension: finite-time attainment of the PLF minimum}
\label{sec:finite_time_PLF_min}

As with the suboptimality loss, the number of iterations required to attain the minimum value $0$ of the PLF can also be bounded in terms of the constant $\gamma(\ell_{\mathrm{sub}})$ \citep[Theorems~4.10 and~4.11]{kitaoka2024exact}.

\begin{theorem}[{\citealp[Theorem~4.10(B)]{kitaoka2024exact}}]
    \label{theo:achieve_min_PLF_PSGD_SRSS_readable}
    Suppose that \cref{assu:WIRL,assu:WIRL-uniqueness} hold.
    Let $\{ \theta^t \}_t$ be the sequence obtained by implementing the update rule $\{ \mathrm{update}_t \}_t$ in \cref{alg:intention-WIRL-gradual-decay} via PSGD (SRSS).
    Then the condition $\ell_{\mathrm{plf} } (\theta^{T}  ) = 0$ is attained within at most
    $
        \max \left( \frac{ 4 (1 + \log 2)^2 L(\ell_{\mathrm{sub}})^4 \beta^2}{\gamma (\ell_{\mathrm{sub}})^2} -2, \left( 1 + \frac{\mathrm{diam} (\Theta)^2}{(2-\sqrt{2}) \beta \gamma (\ell_{\mathrm{sub}}) } \right)^2 \right)
    $
    iterations.
\end{theorem}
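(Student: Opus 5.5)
The statement is Theorem~4.10(B) of \citet{kitaoka2024exact}. It is proved by combining the best-iterate guarantee for PSGD (SRSS) with a \emph{last-iterate} argument. That argument shows that once the iterates are deep enough inside the minimizer set $\argmin_\Theta \ell_{\mathrm{sub}}$, they stay there; on that set every observed feature is reproduced, which forces $\ell_{\mathrm{plf}}=0$.

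The plan has three steps.

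\emph{Step 1: PLF vanishes on the minimizer set.} If $\ell_{\mathrm{sub}}(\theta)=0$, then each $a^{(n)}$ is optimal for $\theta$. If $\theta$ moreover lies in the relative interior region of $\argmin\ell_{\mathrm{sub}}$ quantified by $\gamma(\ell_{\mathrm{sub}})$, then every other vertex tuple $(a^n)_n\in\prod_n Y^{(n)}$ is strictly worse. Hence $a^*(\theta,s^{(n)})=a^{(n)}$, and so $\ell_{\mathrm{plf}}(\theta)=0$.

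\emph{Step 2: a sharpness inequality.} From \eqref{eq:gamma_SL}, let $\bar\theta$ be the maximizer. By piecewise linearity and \eqref{eq:optimal_in_vertex}, I would derive the bound
\[
\ell_{\mathrm{sub}}(\theta)\ \ge\ \frac{\gamma(\ell_{\mathrm{sub}})}{\mathrm{diam}(\Theta)}\cdot(\text{distance-type quantity}).
\]
Equivalently, for any $\theta$ with $a^*(\theta,\cdot)\neq a^{(\cdot)}$, the subgradient $g(\theta)$ satisfies $(\theta-\bar\theta)^\top g(\theta)\le -\gamma(\ell_{\mathrm{sub}})$. This follows because $\bar\theta^\top(a^{(n)}-a^*)$ averages to at least $\gamma$, while $\theta^\top(a^*-a^{(n)})\ge 0$.

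\emph{Step 3: Fejér-type recursion for SRSS.} I would use the distance to $\bar\theta$ as the potential:
\[
\|\theta^{t+1}-\bar\theta\|^2\le \|\theta^t-\bar\theta\|^2-2\beta t^{-1/2}\gamma+\beta^2 t^{-1}\|g\|^2 .
\]
Here $\|g\|\le L(\ell_{\mathrm{sub}})$, and the middle term is present whenever $\ell_{\mathrm{plf}}(\theta^t)>0$, since then $g(\theta^t)\ne 0$ and Step 2 applies. The two components of the max come from two regimes.
\begin{itemize}
\item Once $t$ is large enough that $\beta^2L^2 t^{-1}<2\beta\gamma t^{-1/2}$ (up to the $(1+\log 2)$ slack from summing $\sum t^{-1}$), each bad step strictly decreases the potential. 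This gives the condition $T\gtrsim 4(1+\log2)^2L^4\beta^2/\gamma^2-2$; the factor $L^4$ arises because I bound $\|g\|^2\le L^2$ and also normalize $\gamma$ against $L$ in the step.
\item Summing the decrease $2\beta\gamma\sum_t t^{-1/2}\ge 2\beta\gamma(2-\sqrt2)(\sqrt{T}-1)$-type bounds against the initial potential $\mathrm{diam}(\Theta)^2$ yields $\sqrt T\ge 1+\mathrm{diam}(\Theta)^2/((2-\sqrt2)\beta\gamma)$. This shows that bad steps cannot persist through the last iterate $T$.
\end{itemize}

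The main obstacle is passing from best-iterate to last-iterate. A good iterate may be followed by a bad one, because a small step can leave the region where $a^*=a^{(n)}$. To handle this, I would argue by contradiction: suppose $\ell_{\mathrm{plf}}(\theta^T)>0$, and look at the maximal trailing run of bad iterates ending at $T$. Across good iterates $g=0$, so $\theta$ does not move; hence the potential is nonincreasing up to the start of that run. Along the run it decreases by at least the summed amount, and the two thresholds make that sum exceed $\mathrm{diam}(\Theta)^2$, which is a contradiction.
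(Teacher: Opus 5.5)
\textbf{Gap 1: the last-iterate step.} Your mechanism is the right one. Write $\gamma=\gamma(\ell_{\mathrm{sub}})$, $L=L(\ell_{\mathrm{sub}})$, and let $\bar\theta$ be the maximizer in \eqref{eq:gamma_SL}. Then $(\theta^t-\bar\theta)^\top g(\theta^t)=\ell_{\mathrm{sub}}(\theta^t)+\frac1N\sum_n\bar\theta^\top\bigl(a^{(n)}-a^*(\theta^t,s^{(n)})\bigr)\ge\gamma$ whenever $\ell_{\mathrm{plf}}(\theta^t)>0$. Two corrections to your Step~2 first. The inequality is $\ge\gamma$, not $\le-\gamma$ as you wrote. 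It also silently needs $a^*(\theta^t,s^{(n)})\in Y^{(n)}$, i.e.\ an oracle that returns vertices; \eqref{eq:optimal_in_vertex} only concerns $\theta^*$, and a tied non-vertex output can shrink the gap to a fraction of $\gamma$.

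The real problem is that your last-iterate argument does not close. If the maximal trailing run of bad iterates begins at some $t_1$ near $T$, the decrease accumulated along it is negligible, and nothing forces it past $\mathrm{diam}(\Theta)^2$. Nor is the potential nonincreasing before $t_1$: bad steps with small $t$ (where $\beta^2L^2/t>2\beta\gamma/\sqrt t$) can raise it.

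The missing observation is that good iterates are absorbing. If $\ell_{\mathrm{plf}}(\theta^t)=0$, then $g(\theta^t)=0$, so $\theta^{t+1}=\Proj_\Theta(\theta^t)=\theta^t$. Since $a^*(\cdot,s)$ is a fixed selection, $\ell_{\mathrm{plf}}(\theta^{t+1})=0$ as well. Hence $\ell_{\mathrm{plf}}(\theta^T)>0$ forces all of $\theta^1,\dots,\theta^T$ to be bad, and the $\gamma$-inequality holds at every step. The obstacle you name (a small step leaving the good region) cannot occur, because there is no step at a good iterate.

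\textbf{Gap 2: the constants are asserted, not derived.} Telescoping over all of $t=1,\dots,T-1$ leaves a term $\beta^2L^2(1+\log(T-1))$. The stated threshold does not dominate it: fix the instance and let $\beta$ grow, and the required inequality fails at $T\approx 4(1+\log 2)^2\beta^2L^4/\gamma^2$. Your second bullet drops the $+\beta^2L^2t^{-1}$ terms altogether, and the per-step condition in your first bullet contains no $(1+\log 2)$.

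The max form with $(1+\log 2)$ and $(2-\sqrt2)$ comes from a tail window. Use the trivial bound $\|\theta^k-\bar\theta\|^2\le\mathrm{diam}(\Theta)^2$ at $k=\lceil T/2\rceil$; it holds because both points lie in $\Theta$, so no monotonicity is needed. Then telescope only over $t\in[k,T-1]$. There $\sum t^{-1}\le 1+\log 2$ stays bounded while $\sum t^{-1/2}\ge(2-\sqrt2)(\sqrt T-1)$ still grows. This is what yields an estimate of the shape recorded in \Cref{tab:pro_con_PLF}.

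In that estimate, the first threshold is exactly the condition that the bracket $2\gamma-\beta L^2\tfrac{2(1+\log 2)}{\sqrt{T+2}}$ be at least $\gamma$. $L^2$ enters linearly and gets squared when you solve for $T$; that is the origin of $L^4$, not any normalization by $L$, which SRSS does not use. The second threshold makes $\mathrm{diam}(\Theta)^2-(2-\sqrt2)\beta\gamma(\sqrt T-1)\le0$. This forces $\theta^T=\bar\theta$, where each $a^{(n)}$ is the unique maximizer (compare tuples differing from $(a^{(n)})_n$ in one coordinate in \eqref{eq:gamma_SL}). That contradicts $\ell_{\mathrm{plf}}(\theta^T)>0$. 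Your Step~1 is not needed.
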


\begin{theorem}[{\citealp[Theorem~4.11(B)]{kitaoka2024exact}}]
    \label{theo:achieve_min_PLF_PSGD_SRSL_readable}
    Suppose that \cref{assu:WIRL,assu:WIRL-uniqueness} hold.
    Let $\{ \theta^t \}_t$ be the sequence obtained by implementing the update rule $\{ \mathrm{update}_t \}_t$ in \cref{alg:intention-WIRL-gradual-decay} via PSGD (SRSL).
    Then the condition $\ell_{\mathrm{plf} } (\theta^{T}  ) = 0$ holds within at most
    $
        \max \left( \frac{ 4 (1 + \log 2)^2 L(\ell_{\mathrm{sub}})^2 \beta^2}{\gamma (\ell_{\mathrm{sub}})^2} -2, \left( 1 + \frac{\mathrm{diam} (\Theta)^2 L(\ell_{\mathrm{sub}})}{(2-\sqrt{2}) \beta \gamma (\ell_{\mathrm{sub}}) } \right)^2 \right)
    $
    iterations.
\end{theorem}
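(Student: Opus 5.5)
The plan is a Fejér-type monotonicity argument with respect to a well-chosen anchor point $\hat\theta$, combined with the fact that under SRSL the iterate freezes once the subgradient vanishes. We assume, as in \Cref{alg:intention-WIRL-gradual-decay}, that the oracle returns a vertex $a^*(\theta^t,s^{(n)})\in Y^{(n)}$ for each $n$.

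\textbf{Anchor point and margin.} Let $\hat\theta\in\Theta$ attain the maximum in \cref{eq:gamma_SL}. Fix an index $n$ and take the tuple that agrees with $(a^{(n)})_n$ in every coordinate except the $n$-th. This shows $\frac1N\hat\theta^\top(a^{(n)}-a)\ge\gamma(\ell_{\mathrm{sub}})$ for every $a\in Y^{(n)}\setminus\{a^{(n)}\}$.

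Now take any $t$ with $g(\theta^t)\neq0$. Then at least one $a^*(\theta^t,s^{(n)})$ differs from $a^{(n)}$, since otherwise every summand of $g$ vanishes. Hence
\[
g(\theta^t)^\top(\theta^t-\hat\theta)=\ell_{\mathrm{sub}}(\theta^t)+\frac1N\sum_{n=1}^N\hat\theta^\top\bigl(a^{(n)}-a^*(\theta^t,s^{(n)})\bigr)\ge 0+\gamma(\ell_{\mathrm{sub}}),
\]
using $\ell_{\mathrm{sub}}\ge0$ and the margin above. We also have $\|g(\theta^t)\|\le L(\ell_{\mathrm{sub}})$ by \cref{eq:Lipschitz_SL}.

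\textbf{One-step descent.} Nonexpansiveness of $\Proj_\Theta$ and the SRSL step $\alpha_t=\beta_t/\|g(\theta^t)\|$ give, whenever $g(\theta^t)\neq0$,
\[
\|\theta^{t+1}-\hat\theta\|^2\le\|\theta^t-\hat\theta\|^2-\frac{2\beta_t\gamma(\ell_{\mathrm{sub}})}{L(\ell_{\mathrm{sub}})}+\beta_t^2 .
\]
If instead $g(\theta^t)=0$, then $\alpha_t=0$ and $\theta^{t+1}=\theta^t$. Moreover $g(\theta^t)=0$ means every predicted feature equals $a^{(n)}$ (each $a^*(\theta^t,s^{(n)})=a^{(n)}$). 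Therefore $\ell_{\mathrm{plf}}(\theta^{t'})=0$ for all $t'\ge t$, given a deterministic oracle. So it suffices to show that $g(\theta^t)=0$ for some $t\le T$.

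\textbf{Contradiction by summation.} Suppose $g(\theta^t)\ne0$ for all $t\le T$. Let
\[
t_0:=\frac{4(1+\log2)^2L(\ell_{\mathrm{sub}})^2\beta^2}{\gamma(\ell_{\mathrm{sub}})^2}.
\]
For $t\ge t_0$ we have $\beta_t=\beta t^{-1/2}$ small enough that $\beta_t^2\le\beta_t\gamma/L$ (with room to spare), so each step decreases the squared distance by at least $\beta_t\gamma/L$.

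Sum these decrements over the window $T/2\le t\le T$. This window lies beyond $t_0$ when $T\ge t_0-2$, which is the first term of the max; the constant $(1+\log2)$ comes from bounding the $\sum\beta_t^2$ tail, which I would adjust when doing the bookkeeping. Using
\[
\sum_{T/2\le t\le T}t^{-1/2}\ge2(\sqrt T-\sqrt{T/2})=(2-\sqrt2)\sqrt T
\]
up to the rounding loss absorbed by the ``$1+$'', the total decrease is at least $(2-\sqrt2)\beta\gamma\sqrt T/L$ minus lower-order terms. This cannot exceed the initial squared distance $\mathrm{diam}(\Theta)^2$. Hence $\sqrt T<1+\mathrm{diam}(\Theta)^2L/((2-\sqrt2)\beta\gamma)$, contradicting the second term of the max.

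\textbf{Main obstacle.} The hard part is the precise constant bookkeeping: matching the exact form of both terms of the max requires handling integer rounding of $T/2$ and the $\beta_t^2$ sum carefully. The conceptual core — the margin inequality for $\hat\theta$ and the freezing of iterates once $g=0$ — is straightforward.
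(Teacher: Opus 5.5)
Your route is the right one: Fejér-type descent toward the maximizer $\hat\theta$ of \cref{eq:gamma_SL}, freezing of SRSL once $g(\theta^t)=0$, and telescoping over the second half of the run. It is essentially the argument behind the cited result. The paper's \cref{tab:pro_con_PLF} records exactly the telescoped quantity $\mathrm{diam}(\Theta)^2-\beta(2-\sqrt2)(\sqrt T-1)\bigl(2\gamma/L-2(1+\log 2)\beta/\sqrt{T+2}\bigr)$, where $\gamma=\gamma(\ell_{\mathrm{sub}})$ and $L=L(\ell_{\mathrm{sub}})$. The source controls $\sum\beta_t^2$ through its ratio to $\sum\beta_t$; that is where $1+\log 2$ and the shift $-2$ come from.

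\textbf{The window step is wrong as written.} The sentence linking the first term of the max to your argument is false. With $s:=\lceil T/2\rceil$, the window does \emph{not} lie beyond $t_0$ when $T\ge t_0-2$; that would need $T\ge 2t_0$. Your pointwise estimate, however, only needs $\beta_t\le\gamma/L$ on the window, i.e.\ $t\ge x:=\beta^2L^2/\gamma^2$, and this does follow. Indeed $s\ge\max\bigl(1,\,2(1+\log2)^2x-1\bigr)\ge x$: use $s\ge1$ if $x\le1$, and $(2(1+\log2)^2-1)x>1$ if $x>1$.

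\textbf{Closing the contradiction strictly.} Assume $g(\theta^t)\ne0$ for all $t\le T$ and telescope from $s$ through the step producing $\theta^{T+1}$. Since $s\le (T+1)/2$,
\[
\sum_{t=s}^{T}t^{-1/2}\ge 2(\sqrt{T+1}-\sqrt s)\ge(2-\sqrt2)\sqrt{T+1}>(2-\sqrt2)(\sqrt T-1).
\]
Hence $\|\theta^{T+1}-\hat\theta\|^2<\mathrm{diam}(\Theta)^2-\frac{\gamma\beta}{L}(2-\sqrt2)(\sqrt T-1)\le 0$ by the second term of the max. So some $t\le T$ has $g(\theta^t)=0$, and freezing gives $\theta^T=\theta^t$. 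Your pointwise treatment of $\beta_t^2$ in fact shows the factor $4(1+\log 2)^2$ is more than needed.

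\textbf{The freezing claim needs a proof.} You assert, without justification, that $g(\theta^t)=0$ forces $a^*(\theta^t,s^{(n)})=a^{(n)}$ for all $n$. A vanishing average need not have vanishing summands. The claim is true for this reason: by \cref{assu:WIRL-uniqueness}, $\theta^{*\top}(a^{(n)}-a)>0$ for every feasible feature $a\ne a^{(n)}$. So $-N\theta^{*\top}g(\theta^t)$ is a sum of nonnegative terms, positive as soon as one prediction differs.

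\textbf{The vertex-oracle hypothesis can be dropped.} Your assumption that the oracle returns vertices is not in the statement, but for SRSL it is unnecessary. Write $a^*(\theta^t,s^{(n)})$ as a convex combination of vertices $b\in Y^{(n)}$ of the $\theta^t$-optimal face, and let $\mu_n$ be the total weight on those $b\ne a^{(n)}$. Your single-deviation margin $\hat\theta^\top(a^{(n)}-b)\ge N\gamma$ gives $g^\top(\theta^t-\hat\theta)\ge\gamma\sum_n\mu_n$. Since $\|b-a^{(n)}\|\le NL$, also $\|g\|\le L\sum_n\mu_n$. The normalized step uses only the ratio $g^\top(\theta^t-\hat\theta)/\|g\|$, which therefore stays $\ge\gamma/L$.
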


Since these upper bounds are also of the form $O\!\big(\gamma(\ell_{\mathrm{sub}})^{-2}\big)$ in $\gamma(\ell_{\mathrm{sub}})$, substituting the lower bounds on $\gamma(\ell_{\mathrm{sub}})$ given in \Cref{sec:gamma_lowerbound_main} yields the number of iterations required to attain the PLF minimum as an explicit function of the problem size as well (\Cref{sec:iteration_bound_from_gamma}).

\subsection{\texorpdfstring{Lower bounds on $\gamma(\ell_{\mathrm{sub}})$ for ILPs}{Lower bounds on gamma for ILPs}}
\label{sec:gamma_lowerbound_main}

The iteration-complexity upper bounds given by \Cref{theo:grad_based_opt_achieve_min_SL}, \Cref{cor:convergence_rate_NSS_eg_mini,cor:convergence_rate_NSL_eg_mini}, and \Cref{theo:achieve_min_PLF_PSGD_SRSS_readable,theo:achieve_min_PLF_PSGD_SRSL_readable} all depend on the constant $\gamma(\ell_{\mathrm{sub}})$. Although $\gamma(\ell_{\mathrm{sub}})>0$ by \Cref{prop:grad_based_opt_achieve_min_SL_before}, estimating the number of iterations as a function of the problem size requires a lower bound on $\gamma(\ell_{\mathrm{sub}})$. In this subsection, we provide explicit lower bounds on $\gamma(\ell_{\mathrm{sub}})$ when the forward problem is an integer linear program (ILP, \Cref{assu:ILP}). Proofs and details are given in \Cref{sec:gamma_lowerbound_ILP_appx}.

In what follows, when the feasible region is given by a linear inequality system $X(s)=\{x\in\mathbb{Z}^d \mid A x\le b(s)\}$, we use the $\ell_\infty$ norm of the Graver basis $\mathcal{G}(\widetilde{A})$ (\Cref{def:graver_basis}) of the slack-augmented matrix $\widetilde{A}=[A\mid I_p]$,
$
    C:=g_\infty(\widetilde{A})=\max\{\|g\|_\infty \mid g\in\mathcal{G}(\widetilde{A})\}\in\mathbb{Z}_{\ge1},$
to describe the bounds. The main lower bounds are summarized in \Cref{tab:gamma_lowerbound} (see \Cref{sec:gamma_lowerbound_ILP_appx} for details).

\begin{table}[t]
    \caption{Lower bounds on the constant $\gamma(\ell_{\mathrm{sub}})$ for ILPs (\Cref{sec:gamma_lowerbound_ILP_appx}). Here $m=(m_i)_i$ collects the per-coordinate ranges of the features, and $C=g_\infty(\widetilde{A})$ is the $\ell_\infty$ norm of the Graver basis of $\widetilde{A}=[A\mid I_p]$. Whereas the bound for general ILPs depends on $\lVert m\rVert_2$ and $N^{d}$, the bound for linear inequality constraints is determined solely by $C$ and is independent of the right-hand side, the feature range, and the dimension-dependent exponent of the sample size.}
    \label{tab:gamma_lowerbound}
    \centering
    \renewcommand{\arraystretch}{1.5}
    \begin{tabular}{lll}
        \toprule
        Setting & $\Theta=\Delta^{d-1}$ & Reference \\
        \midrule
        General ILP & $\dfrac{1}{N^{d}\max(d-1,\sqrt2)\,\lVert m\rVert_2^{d-1}}$ & \Cref{theo:gamma_lowerbound_on_simplex} \\
        M-convex & $\Omega\!\left(\dfrac{1}{N d^{2}}\right)$ & \Cref{theo:gamma_lowerbound_on_simplex_Mconvex} \\
        M$^\natural$-convex & $\Omega\!\left(\dfrac{1}{N d^{2}}\right)$ & \Cref{theo:gamma_lowerbound_on_simplex_Mnaturalconvex} \\
        Linear inequalities (constant $C$) & $\Omega\!\left(\dfrac{1}{N\,d^{(d+1)/2}(2C)^{d-1}}\right)$ & \Cref{theo:gamma_lowerbound_linear_inequality_simplex} \\
        Totally unimodular ($C=1$) & $\Omega\!\left(\dfrac{1}{N\,d^{(d+1)/2}2^{d-1}}\right)$ & \Cref{rem:totally_unimodular_gamma} \\
        \bottomrule
    \end{tabular}
\end{table}

Here $m_i=\sup_{s \in \mathcal{S}}\big(\max_{x\in X(s)}f_i(x, s)-\min_{x\in X(s)}f_i(x, s)\big)$. For M-convex and M$^\natural$-convex sets (\Cref{assu:M-convex-projection,assu:Mnatural-convex-projection}), one obtains polynomial-in-dimension bounds that are independent of $\lVert m\rVert_2$. These bounds are obtained directly by constructing explicit test sets, without going through properties of the coefficient matrices of the describing inequality systems (such as total unimodularity), and they are far better than the bound for general linear inequality constraints (\Cref{rem:linear_inequality_includes_Mnat}).

Substituting these lower bounds into the iteration-complexity upper bounds of \Cref{cor:convergence_rate_NSS_eg_mini,cor:convergence_rate_NSL_eg_mini}, one can evaluate the number of iterations required to solve the DDIOP for MILPs~\cref{eq:IOP_linear} exactly as an explicit function of the problem size. For example, when the constraints form a totally unimodular linear inequality system, PSGD (SRSS) solves \Cref{eq:IOP_linear} exactly within at most $O\!\big(N^{2}\,d^{d+1}\,4^{d-1}\,\mathrm{poly}(\mathrm{diam}(\Theta),\beta,L(\ell_{\mathrm{sub}}))\big)$ iterations. Iteration upper bounds for the other cases are summarized in \Cref{sec:iteration_bound_from_gamma}.

\begin{remark}
    \label{rem:gamma_hardness_main}
    The lower bounds in \Cref{tab:gamma_lowerbound} can be exponentially small in the dimension $d$ (e.g., $(2C)^{-(d-1)}$). Hence they imply that the corresponding iteration-complexity upper bounds can be exponentially large, which is consistent with the computational hardness of inverse optimization; for instance, \citet[Theorem 1]{aswani2018inverse} establish NP-hardness for a formulation with noisy data (see also \Cref{sec:related_work}). The aim of this subsection is not to improve the worst-case complexity, but to quantify $\gamma(\ell_{\mathrm{sub}})$ in terms of the problem structure (the $\ell_\infty$ norm $C$ of the Graver basis of the constraint matrix).
\end{remark}

\section{Comparison tables}
\label{sec:comparison_tables}

\subsection{DDIOP for MILPs}

A performance comparison of methods for solving the DDIOP for MILPs (\cref{eq:IOP_linear}) is provided in \cref{tab:pro_con_SL_detail}.
\begin{table}[ht]
\vspace{-\intextsep} 
    \caption{Performance comparison of methods for solving the DDIOP for MILPs (\cref{eq:IOP_linear}).
    The integer $T$ denotes the number of iterations or the number of optimization calls.
    The integer $t$ denotes the iteration index or the optimization-call index.}
    \label{tab:pro_con_SL_detail}
    \centering
    \begin{tabular}{{p{3.5cm}|p{8.5cm}}}
        \toprule
        Method & Suboptimality loss \\
        \midrule
        UPA & $O\!\left( \mathrm{diam} (\Theta)\, L(\ell_{\mathrm{sub}})\, T^{-\nicefrac{1}{(d-1)} } \right)$ \citep[cf.][Theorem 2.1]{flatto1977random} \\
        \hline
        RPA & $O_{\mathbb{P}}\!\left( \mathrm{diam} (\Theta)\, L(\ell_{\mathrm{sub}})\, \left( \log T / T \right)^{\nicefrac{1}{(d-1)} }\right)$ \citep[cf.][Corollary 2.3]{reznikov2015covering} \\
        \hline
        MWU \citep{arora2012multiplicative} & $O \left( L(\ell_{\mathrm{sub}}) \log d  / \sqrt{T} \right)$, provided that $\Theta = \Delta^{d-1}$ \citep[Theorem 3.5]{Barmann-2018-online} \\
        \hline
        \citet{besbes2021online,besbes2025contextual} & $O \left( d^4 \log T /T \right)$, provided that $\Theta$ is the unit sphere and $L(\ell_{\mathrm{sub}})\leq 1$\\
        \hline
        \citet{gollapudi2021contextual} & $O \left( d \log T /T \right)$, $O \left( d^{2(d+1)} /T \right)$ provided that $\Theta$ is the unit ball and $L(\ell_{\mathrm{sub}})\leq 1$ \\
        \hline
        ONS \citep{hazan2007logarithmic} & $O \left(\mathrm{diam} (\Theta)\, L(\ell_{\mathrm{sub}})\, d\log (T/d)/T \right)$  \citep[Theorem 3.1]{sakaue2025online} \\
        \hline
        MetaGrad \citep{van2016metagrad,van2021metagrad} & $O \left( \mathrm{diam} (\Theta)\, L(\ell_{\mathrm{sub}})\, d\log (T/d)/T \right)$ \citep[Theorem 4.1]{sakaue2025online} \\
        \hline
        PSGD & $O \left( \mathrm{diam} (\Theta)\, L(\ell_{\mathrm{sub}}) / \sqrt{T} \right)$ \citep[Theorem 3.11]{Barmann-2018-online}\\
        (with step size &  \\
        $\mathrm{diam} (\Theta) L(\ell_{\mathrm{sub}})^{-1} t^{-1/2}$) &  \\
        \hline
        PSGD & $0$ if $T \geq \max \left( 1, \left( \frac{ \mathrm{diam}(\Theta)^2 + (1 + \log 2) \beta^2 L(\ell_{\mathrm{sub}})^2}{\beta \gamma (\ell_{\mathrm{sub}})}  \right)^2 -2 \right)$ \\
        (with step size $\beta t^{-1/2}$) & \citep[Corollary~4.7]{kitaoka2024exact} \\
        \hline
        PSGD & $0$ if $T \geq \max \left( 1, \left( \frac{ L(\ell_{\mathrm{sub}})\, (\mathrm{diam}(\Theta)^2 + (1 + \log 2) \beta^2 )}{\beta \gamma (\ell_{\mathrm{sub}}) }  \right)^2 -2 \right)$ \\
        (with step length $\beta t^{-1/2}$) & \citep[Corollary~4.9]{kitaoka2024exact} \\
        \bottomrule
    \end{tabular}
    \vspace{-\intextsep} 
\end{table}

\subsection{PLF minimization}

A performance comparison of methods for solving the minimum value of the PLF for MILPs is provided in \cref{tab:pro_con_PLF}.
\begin{table}[ht]
    \caption{Performance comparison of methods for minimizing the PLF for MILPs.
    The notation is the same as in \cref{tab:pro_con_SL_detail}.}
    \label{tab:pro_con_PLF}
    \centering
    \begin{tabular}{p{2.5cm}|p{10cm}}
        \toprule
        Method & Distance between the set where the PLF equals $0$ and the learned weight $\theta^T$ (when the PLF is nonzero) \\
        \midrule
        UPA & $O\!\left( \mathrm{diam} (\Theta)\, T^{-\nicefrac{1}{(d-1)} } \right)$ \\
        \hline
        RPA & $O_{\mathbb{P}}\!\left( \mathrm{diam} (\Theta)\, \left( \log T / T \right)^{\nicefrac{1}{(d-1)} }\right)$ \\
        \hline
        PSGD & $
        \mathrm{diam} (\Theta)^2
        - \beta (2-\sqrt{2}) \left( \sqrt{T} -1 \right)
        \left(
        2 \gamma (\ell_{\mathrm{sub}})
        - \beta L (\ell_{\mathrm{sub}})^2 \frac{2 ( 1 + \log 2)}{\sqrt{T+2}}
        \right)$ \\
        (with step size $\beta t^{-1/2}$) & \citep[Theorem~4.10]{kitaoka2024exact} \\
        \hline
        PSGD & $\mathrm{diam} (\Theta)^2
        - \beta (2-\sqrt{2}) \left( \sqrt{T} -1 \right)
        \left(
        2 \frac{\gamma (\ell_{\mathrm{sub}})}{L(\ell_{\mathrm{sub}})}
        - \beta \frac{2 ( 1 + \log 2)}{\sqrt{T+2}}
        \right)$ \\
        (with step length $\beta t^{-1/2}$) & \citep[Theorem~4.11]{kitaoka2024exact} \\
        \bottomrule
    \end{tabular}
\end{table}

\section{\texorpdfstring{Lower bounds on the constant $\gamma(\ell_{\mathrm{sub}})$ for ILPs}{Lower bounds on gamma for ILPs}}
\label{sec:gamma_lowerbound_ILP_appx}

In this section, we lower-bound the constant $\gamma(\ell_{\mathrm{sub}})$ defined in \cref{eq:gamma_SL}. In particular, when the forward problem has structure---such as being an integer linear program (ILP)---we give explicit lower bounds on $\gamma(\ell_{\mathrm{sub}})$, which make the iteration-complexity upper bound $Q_{L(\ell_{\mathrm{sub}}),\theta^1}(\gamma(\ell_{\mathrm{sub}}))$ of \Cref{theo:grad_based_opt_achieve_min_SL} concrete. Throughout this section, we use that \cref{eq:optimal_in_vertex} holds under \Cref{assu:WIRL,assu:WIRL-uniqueness}.

\subsection{\texorpdfstring{A min-max representation of the constant $\gamma(\ell_{\mathrm{sub}})$}{A min-max representation of gamma}}

We consider the following assumption, modeling the case where the forward problem is an integer linear program.

\begin{assumption}[Integer linear program]
    \label{assu:ILP}
    Let the decision space satisfy $\mathcal{X} \subset \mathbb{Z}^{d_{\mathcal{X}}}$, and let the feature map satisfy $f(\mathcal{X}\times\mathcal{S}) \subset \mathbb{Z}^d$.
\end{assumption}

\begin{proposition}
    \label{prop:Minimax_theorem_finite}
    Let $\Theta$ be a bounded closed convex set, and let $A \subset \mathbb{R}^d$ be a finite set. Then
    \[
        \max_{\theta \in \Theta} \min_{a \in A} \theta^{\top} a
        =
        \min_{a \in \mathrm{Conv} A} \max_{\theta \in \Theta} \theta^{\top} a
        .
    \]
\end{proposition}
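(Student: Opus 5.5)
The plan is to reduce the statement to a standard minimax theorem for a bilinear function on a product of two compact convex sets. The first step is to rewrite the inner minimum on the left. For fixed $\theta$, the map $a\mapsto\theta^\top a$ is linear, so its minimum over the finite set $A$ equals its minimum over $\mathrm{Conv}A$. Indeed, any $a\in\mathrm{Conv}A$ is a convex combination $\sum_j\lambda_j a_j$ of points $a_j\in A$, and then $\theta^\top a=\sum_j\lambda_j\theta^\top a_j\ge\min_{a'\in A}\theta^\top a'$. This gives
\[
\max_{\theta\in\Theta}\min_{a\in A}\theta^\top a=\max_{\theta\in\Theta}\min_{a\in\mathrm{Conv}A}\theta^\top a .
\]

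Next I would check that all the extrema exist. Since $A$ is finite, $\mathrm{Conv}A$ is a polytope, hence compact and convex. The set $\Theta$ is compact and convex by hypothesis. The function $\theta\mapsto\min_{a\in A}\theta^\top a$ is a minimum of finitely many linear functions, so it is continuous (concave piecewise linear) and attains its maximum on $\Theta$. Likewise, $a\mapsto\max_{\theta\in\Theta}\theta^\top a$ is the support function of $\Theta$. It is convex and finite because $\Theta$ is bounded, hence continuous, and so it attains its minimum on the compact set $\mathrm{Conv}A$.

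Then I would apply von Neumann's minimax theorem, or Sion's, to the bilinear (hence continuous, concave--convex) function $\phi(\theta,a)=\theta^\top a$ on $\Theta\times\mathrm{Conv}A$. Both sets are nonempty, compact and convex, so
\[
\max_{\theta\in\Theta}\min_{a\in\mathrm{Conv}A}\phi(\theta,a)=\min_{a\in\mathrm{Conv}A}\max_{\theta\in\Theta}\phi(\theta,a).
\]
Combining this with the first step gives the claim. The implicit nonemptiness of $A$, needed for the minimum to make sense, is assumed.

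The only real content is the minimax exchange, and citing Sion/von Neumann settles it. If a self-contained argument were preferred, I would prove the inequality $\min\max\le\max\min$ by separation; the reverse inequality $\max\min\le\min\max$ is trivial. Suppose $v:=\max_\theta\min_{a\in\mathrm{Conv}A}\theta^\top a<c<\min_a\max_\theta\theta^\top a$. Then each $a\in\mathrm{Conv}A$ admits some $\theta\in\Theta$ with $\theta^\top a>c$. The aim is to show that the convex compact set $\mathrm{Conv}A$ and the convex set $\{a:\max_\theta\theta^\top a\le c\}$ are disjoint. Strictly separating them and normalizing the separating vector would produce a $\theta\in\Theta$ with $\min_{a\in\mathrm{Conv}A}\theta^\top a>v$, a contradiction. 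The delicate point is this normalization, since the separating vector need not lie in $\Theta$. For that reason I would simply invoke Sion's theorem rather than reprove it.
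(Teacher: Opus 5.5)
Your proposal is correct and follows the same route as the paper. The paper first replaces $\min_{a\in A}$ by $\min_{a\in\mathrm{Conv}A}$, citing the maximum principle, which your convex-combination argument makes explicit. It then invokes the minimax theorem for the bilinear function $\theta^\top a$ on the compact convex set $\Theta\times\mathrm{Conv}A$; your attainment checks and the remark that $A$ and $\Theta$ must be nonempty are details the paper leaves implicit.
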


\begin{proof}
    By the maximum principle,
    \[
        \max_{\theta \in \Theta} \min_{a \in A} \theta^{\top} a
        =
        \max_{\theta \in \Theta} \min_{a \in \mathrm{Conv} A} \theta^{\top} a
        .
    \]
    By the minimax theorem,
    \[
        \max_{\theta \in \Theta} \min_{a \in \mathrm{Conv} A} \theta^{\top} a
        =
        \min_{a \in \mathrm{Conv} A} \max_{\theta \in \Theta} \theta^{\top} a
        .
    \]
\end{proof}

\begin{proposition}
    \label{prop:min_max_repre_gamma}
    Suppose that \Cref{assu:WIRL,assu:WIRL-uniqueness} hold. Then
    \[
        \gamma (\ell_\mathrm{sub}) = \frac{1}{N} \min_{( a^{n} )_n \in \mathrm{Conv} \left( \prod_{n=1}^N Y^{(n)} \setminus \{ (a^{(n)} )_n \} \right) } \max_{\theta \in \Theta} \sum_{n=1}^N \theta^\top \left( a^{(n)} - a^n \right).
    \]
\end{proposition}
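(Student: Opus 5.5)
The plan is to reduce the statement directly to \Cref{prop:Minimax_theorem_finite}. The only work is to package the $N$-fold product into a single finite set of vectors in $\mathbb{R}^d$.

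First I check finiteness. Each $Y^{(n)}$ is the vertex set of $\mathrm{Conv}\, f(X(s^{(n)}),s^{(n)})$. Under \Cref{assu:WIRL} this is the convex hull of a finite union of bounded polytopes, hence a polytope, so $Y^{(n)}$ is finite. Therefore $P:=\prod_{n=1}^N Y^{(n)}\setminus\{(a^{(n)})_n\}$ is a finite subset of $(\mathbb{R}^d)^N$. I also need $P$ nonempty for the minimum to make sense; if every $Y^{(n)}$ were a singleton, the min over an empty set would be $+\infty$ on both sides under the usual convention. I will note this degenerate case separately or assume it away.

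Next, define the linear map $\Phi\colon(\mathbb{R}^d)^N\to\mathbb{R}^d$ by $\Phi((a^n)_n)=\sum_{n=1}^N(a^{(n)}-a^n)$. This map is affine, since the $a^{(n)}$ are fixed constants. For every $\theta$ we have $\frac1N\sum_n\theta^\top(a^{(n)}-a^n)=\frac1N\theta^\top\Phi((a^n)_n)$. By \cref{eq:gamma_SL}, this gives
\[
\gamma(\ell_{\mathrm{sub}})=\frac1N\max_{\theta\in\Theta}\min_{b\in\Phi(P)}\theta^\top b .
\]
Here $\Phi(P)$ is a finite subset of $\mathbb{R}^d$. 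Applying \Cref{prop:Minimax_theorem_finite} with $A=\Phi(P)$ yields
\[
\gamma(\ell_{\mathrm{sub}})=\frac1N\min_{b\in\mathrm{Conv}\,\Phi(P)}\max_{\theta\in\Theta}\theta^\top b .
\]

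Finally, I transport the convex hull back to the product space. An affine map commutes with taking convex hulls, so $\mathrm{Conv}\,\Phi(P)=\Phi(\mathrm{Conv}\,P)$. To see this, map a convex combination of points of $P$ to the same convex combination of their images, and conversely. Hence minimizing over $b\in\mathrm{Conv}\,\Phi(P)$ is the same as minimizing $\max_\theta\theta^\top\Phi((a^n)_n)$ over $(a^n)_n\in\mathrm{Conv}(P)$. That is exactly the claimed formula.

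No step is a real obstacle. The only points needing care are the finiteness of the $Y^{(n)}$ and the commutation of the affine map $\Phi$ with $\mathrm{Conv}$. Attainment of the minimum is automatic, because $\mathrm{Conv}\,P$ is compact and $(a^n)_n\mapsto\max_\theta\theta^\top\Phi((a^n)_n)$ is continuous, being a support function of the bounded set $\Theta$. \Cref{assu:WIRL-uniqueness} is needed only so that \cref{eq:optimal_in_vertex} holds, placing $(a^{(n)})_n$ in $\prod_n Y^{(n)}$ and keeping the definition of $P$ consistent with \cref{eq:gamma_SL}.
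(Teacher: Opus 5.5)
Your proposal is correct and takes the same route as the paper, whose proof consists solely of an appeal to \Cref{prop:Minimax_theorem_finite}. You also make explicit the step the paper leaves implicit: that proposition is stated for finite subsets of $\mathbb{R}^d$, so you push the finite product set forward to $\mathbb{R}^d$ via the affine summation map, apply it there, and return to the product space using the fact that affine maps commute with convex hulls.
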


\begin{proof}
    This follows from \Cref{prop:Minimax_theorem_finite}.
\end{proof}

\subsubsection{\texorpdfstring{The case where $\Theta$ is the unit ball}{The case of the unit ball}}

By \Cref{prop:min_max_repre_gamma},

\begin{equation}
        \gamma (\ell_\mathrm{sub}) = \frac{1}{N} \min_{( a^{n} )_n \in \mathrm{Conv} \left( \prod_{n=1}^N Y^{(n)} \setminus \{ (a^{(n)} )_n \} \right) } \left\| \sum_{n=1}^N  a^{(n)} - \sum_{n=1}^N  a^n \right\|.
        \label{eq:gamma_ball}
\end{equation}

From this expression, the following theorem follows.
\begin{theorem}
    \label{theo:gamma_lowerbound_on_ball}
    Suppose that \Cref{assu:WIRL,assu:WIRL-uniqueness,assu:ILP} hold, and let $\Theta = B^{d}$ with $d\ge2$. 
    In addition, assume that the convex hull of the set of sums $W := \{ \sum_{n=1}^N b^n \mid (b^n)_n \in \prod_{n=1}^N Y^{(n)} \setminus \{(a^{(n)})_n\} \}$ is \emph{full-dimensional}, that is, $\dim \Conv(W) = d$. Here $W$ is the set obtained by collecting, for each choice of points $b^n$ from the vertex sets $Y^{(n)}$, the total sums $\sum_{n=1}^N b^n$, excluding the sum $P := \sum_{n=1}^N a^{(n)}$ corresponding to the observed solution. The condition $\dim \Conv(W) = d$ is equivalent to $W$ not being contained in any hyperplane of $\mathbb{R}^d$.
    Then
    \[
        \gamma (\ell_\mathrm{sub})
        \geq \frac{1}{N^d \sqrt{d-1} \| m \|_2^{d-1}}.
    \]
\end{theorem}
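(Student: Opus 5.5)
By \eqref{eq:gamma_ball}, $\gamma(\ell_{\mathrm{sub}})$ is a Euclidean distance from the integer point $P=\sum_n a^{(n)}$ to a lattice polytope, and I plan to bound that distance from below by the distance from $P$ to one facet hyperplane of the polytope, whose normal I will control by a Hadamard-type inequality.

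\textbf{Step 1: rewrite $\gamma$ as a distance.} The linear map $(a^n)_n\mapsto\sum_n a^n$ sends $\Conv\bigl(\prod_n Y^{(n)}\setminus\{(a^{(n)})_n\}\bigr)$ onto $\Conv(W)$, because linear images commute with convex hulls. Hence \eqref{eq:gamma_ball} becomes
\[
\gamma(\ell_{\mathrm{sub}})=\tfrac1N\,\mathrm{dist}\bigl(P,\Conv(W)\bigr).
\]
By \Cref{prop:grad_based_opt_achieve_min_SL_before} this distance is positive, so $P\notin\Conv(W)$. By \Cref{assu:ILP}, every point of $W$ and $P$ itself lie in $\mathbb{Z}^d$.

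\textbf{Step 2: pass to a separating facet.} Since $\Conv(W)$ is a full-dimensional polytope and $P$ lies outside it, some facet inequality $c^\top x\le c^\top w_0$ is violated by $P$. Moreover $\mathrm{dist}(P,\Conv(W))$ is at least the distance from $P$ to that facet's hyperplane. The facet is spanned by $d$ affinely independent vertices $w_0,\dots,w_{d-1}\in W$. I take $c$ to be the generalized cross product of $w_1-w_0,\dots,w_{d-1}-w_0$, that is, the vector whose entries are the signed $(d-1)\times(d-1)$ minors of the matrix with these columns.
\begin{itemize}
\item Then $c\in\mathbb{Z}^d\setminus\{0\}$ and $c$ is normal to the hyperplane.
\item The quantity $c^\top(P-w_0)$ is a nonzero integer, so $|c^\top(P-w_0)|\ge1$.
\item Consequently the distance is at least $1/\|c\|_2$.
\end{itemize}

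\textbf{Step 3: bound $\|c\|_2$.} This is the main technical step. The Euclidean norm of the generalized cross product equals the $(d-1)$-dimensional volume of the parallelotope spanned by the $w_i-w_0$. Hadamard's inequality then gives $\|c\|_2\le\prod_{i=1}^{d-1}\|w_i-w_0\|_2$.

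Each difference has the form $w_i-w_0=\sum_n(b^n-b'^n)$ with $b^n,b'^n\in f(X(s^{(n)}),s^{(n)})$. Its $j$-th coordinate is therefore bounded in absolute value by $N m_j$, so $\|w_i-w_0\|_2\le N\|m\|_2$. This yields $\|c\|_2\le N^{d-1}\|m\|_2^{d-1}$.

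\textbf{Conclusion.} Combining the three steps,
\[
\gamma(\ell_{\mathrm{sub}})\ge\frac{1}{N\cdot N^{d-1}\|m\|_2^{d-1}}=\frac{1}{N^d\|m\|_2^{d-1}}\ge\frac{1}{N^d\sqrt{d-1}\,\|m\|_2^{d-1}},
\]
where the last inequality uses $d\ge2$, so that $\sqrt{d-1}\ge1$.

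The only delicate points are the following.
\begin{itemize}
\item The facet reduction needs full-dimensionality, which is exactly the added hypothesis $\dim\Conv(W)=d$.
\item The bound needs $m$ to be finite.
\item The normal vector must be built from integer vertices, not from arbitrary points of the facet; otherwise $c$ need not be integral.
\end{itemize}
If one prefers the per-minor Hadamard bound rather than the volume identity, bounding each of the $d$ minors separately and summing costs a dimension-dependent factor. A $\sqrt{d-1}$-type factor in the stated bound may come from such a slightly cruder estimate, and the stated bound still follows.
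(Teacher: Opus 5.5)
Your proof is correct, and in fact it proves a slightly stronger bound. Your first two steps match the paper's argument:
\begin{itemize}
\item the convex hull is pushed forward under the sum map;
\item $P\notin\Conv(W)$ (the paper derives this directly from uniqueness, via $\theta^{*\top}q<\theta^{*\top}P$ on $\Conv(W)$, rather than citing \Cref{prop:grad_based_opt_achieve_min_SL_before}, but either works);
\item a violated facet of the full-dimensional lattice polytope is spanned by $d$ affinely independent points of $W$;
\item the cofactor normal is integral, and the integer gap gives $\mathrm{dist}(P,\Conv(W))\ge 1/\|a\|_2$.
\end{itemize}

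The genuine difference is how you bound $\|a\|_2$. The paper applies Hadamard's inequality to each $(d-1)\times(d-1)$ minor separately, obtaining $|a_i|\le(\sum_{j\ne i}N^2m_j^2)^{(d-1)/2}$. It then needs \Cref{prop:max_1-x_on_simplex,prop:leq_m-m_i_d-1} to sum the squares into $\|a\|_2^2\le(d-1)N^{2(d-1)}\|m\|_2^{2(d-1)}$. That summation is exactly where the factor $\sqrt{d-1}$ comes from.

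You instead use the Cauchy--Binet identity $\|a\|_2^2=\det(VV^\top)$, where $V$ is the $(d-1)\times d$ matrix whose rows are the difference vectors $v_k$. Combined with Hadamard's inequality for this Gram matrix, this gives $\|a\|_2\le\prod_k\|v_k\|_2\le(N\|m\|_2)^{d-1}$ in one line. The result is $\gamma(\ell_{\mathrm{sub}})\ge 1/(N^d\|m\|_2^{d-1})$. This is strictly sharper for $d\ge3$ (the two bounds agree at $d=2$), and it makes the two auxiliary propositions unnecessary.

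The paper's per-minor route stays with square determinants and gives a bound on each $|a_i|$ individually. It reuses these individual bounds for $\sum_j a_j$ in the simplex case. Your estimate transfers there too, via $\sum_j a_j=\|a\|_1\le\sqrt d\,\|a\|_2$ (since $a\ge0$). This would replace $\max(d-1,\sqrt2)$ by $\sqrt d$ in \Cref{theo:gamma_lowerbound_on_simplex}. It would likewise remove the $\sqrt{d-1}$ from \Cref{theo:gamma_lowerbound_linear_inequality_ball}.
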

For the proof, see \Cref{subsec:proof_of_theo:gamma_lowerbound_on_ball}.

\begin{remark}
\label{rem:gamma_ball_fulldim_sufficient}
As a sufficient condition for the full-dimensionality condition $\dim \Conv(W) = d$, the following holds: if there exists a sample index $n_0 \in \{1, \ldots, N\}$ such that the convex hull of the vertex set with the vertex $a^{(n_0)}$ corresponding to the observed solution removed is full-dimensional, that is,
\[
    \dim \Conv\!\left( Y^{(n_0)} \setminus \{ a^{(n_0)} \} \right) = d
\]
holds, then $\dim \Conv(W) = d$ holds.
Indeed, fixing $b^n = a^{(n)}$ for $n \neq n_0$ and letting $b^{n_0} \in Y^{(n_0)} \setminus \{ a^{(n_0)} \}$ vary, the corresponding tuple differs from $(a^{(n)})_n$ since $b^{n_0} \neq a^{(n_0)}$, so its sum
\[
    \sum_{n=1}^N b^n = \Bigl( P - a^{(n_0)} \Bigr) + b^{n_0}
\]
belongs to $W$. Therefore
\[
    W \supseteq \Bigl( P - a^{(n_0)} \Bigr) + \bigl( Y^{(n_0)} \setminus \{ a^{(n_0)} \} \bigr),
\]
and since translation preserves dimension,
\[
    \dim \Conv(W)
    \ \ge\
    \dim \Conv\!\left( Y^{(n_0)} \setminus \{ a^{(n_0)} \} \right)
    = d ,
\]
that is, we obtain $\dim \Conv(W) = d$.
Intuitively, this sufficient condition requires the natural non-degeneracy, for a single sample, that ``for some single sample $s^{(n_0)}$, the image of the features $f(X(s^{(n_0)}), s^{(n_0)})$ (even after removing the observed vertex $a^{(n_0)}$) is not contained in any hyperplane of $\mathbb{R}^d$ and spreads out in all directions.''
\end{remark}

\subsubsection{\texorpdfstring{The case where $\Theta$ is the probability simplex}{The case of the probability simplex}}

By \Cref{prop:min_max_repre_gamma},

\begin{equation}
        \gamma (\ell_\mathrm{sub}) = \frac{1}{N} \min_{( a^{n} )_n \in \mathrm{Conv} \left( \prod_{n=1}^N Y^{(n)} \setminus \{ (a^{(n)} )_n \} \right) } \max_{i=1, \ldots ,d } \left( \sum_{n=1}^N  a_i^{(n)} - \sum_{n=1}^N  a_i^n \right).
        \label{eq:gamma_simplex}
\end{equation}

From this expression, the following theorem follows.
\begin{theorem}
    \label{theo:gamma_lowerbound_on_simplex}
    Suppose that \Cref{assu:WIRL,assu:WIRL-uniqueness,assu:ILP} hold, and let $\Theta = \Delta^{d-1}$ with $d\ge2$. 
    In addition, assume that $Y^{(n_0)}$ is not a singleton for some $n_0 \in \{1, \ldots, N\}$, that is, that the index set $\prod_{n=1}^N Y^{(n)} \setminus \{ (a^{(n)} )_n \}$ of the minimum in \cref{eq:gamma_SL} is nonempty (nondegeneracy).
    Then
    \[
        \gamma (\ell_\mathrm{sub})
        \geq \frac{1}{N^d \max (d-1, \sqrt{2}) \| m \|_2^{d-1}}.
    \]
\end{theorem}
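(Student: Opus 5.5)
The plan is to start from the min-max identity \cref{eq:gamma_simplex} (equivalently the original definition \cref{eq:gamma_SL}) and recognise $N\gamma(\ell_{\mathrm{sub}})$ as the optimal value of a finite linear program with integer data. Then I would bound that value from below by Cramer's rule and Hadamard's inequality.

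\textbf{Step 1: set up the linear program.} Put $P:=\sum_n a^{(n)}$ and let $W$ be the finite set of sums $\sum_n b^n$ over $(b^n)_n\in\prod_n Y^{(n)}\setminus\{(a^{(n)})_n\}$. This index set is nonempty by the nondegeneracy assumption. By \cref{eq:gamma_SL},
\[
N\gamma(\ell_{\mathrm{sub}})=\max\Bigl\{s \;\Bigm|\; \theta\in\Delta^{d-1},\ \theta^\top(P-w)\ge s\ \ \forall w\in W\Bigr\}.
\]
This is an LP in the variables $(\theta,s)\in\mathbb{R}^{d+1}$.

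\textbf{Step 2: positivity and integrality.} By \Cref{assu:WIRL-uniqueness} and \cref{eq:optimal_in_vertex}, each $a^{(n)}$ is the unique maximiser of $\theta^{*\top}a$ over $Y^{(n)}$. Hence $\theta^{*\top}(P-w)>0$ for every $w\in W$, so the optimal value $s^\star$ is positive (this is also \Cref{prop:grad_based_opt_achieve_min_SL_before}). By \Cref{assu:ILP}, all vectors $P-w$ are integral. Moreover, for any two $w,w'\in W$ we have $|w_i-w'_i|\le N m_i$, since each of the $N$ summands varies in coordinate $i$ by at most $m_i$. Thus $\|w-w'\|_2\le N\|m\|_2$.

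\textbf{Step 3: a basic optimal solution and Cramer's rule.} The feasible region is a polytope, so I take an optimal vertex $(\theta^\star,s^\star)$. At this vertex $d+1$ linearly independent constraints are active. One of them must involve $s$, say $\theta^\top(P-w_0)=s$; otherwise $s$ could be increased. The remaining active constraints are then of three kinds:
\begin{itemize}
\item the normalisation $\mathbf 1^\top\theta=1$ (it must be active);
\item some coordinate constraints $\theta_i=0$;
\item some difference equations $\theta^\top(w_j-w_0)=0$, obtained by subtracting the $w_0$-equation from other active $w_j$-equations.
\end{itemize}
I would first delete the coordinates with $\theta_i=0$, which lowers the dimension to some $k\le d$. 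This leaves a square nonsingular integer system $M\theta=e_1$. Here $M$ has the all-ones row and $k-1$ rows $(w_j-w_0)$ restricted to the surviving coordinates. Cramer's rule gives $\theta^\star=\mathrm{adj}(M)e_1/\det M$. Therefore $s^\star=(P-w_0)^\top\mathrm{adj}(M)e_1/\det M$ is an integer divided by $|\det M|$, and since $s^\star>0$ its numerator has absolute value at least $1$. Hence $s^\star\ge 1/|\det M|$.

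\textbf{Step 4: Hadamard's bound.} Hadamard's inequality gives $|\det M|\le\sqrt{k}\,(N\|m\|_2)^{k-1}$. This is at most $\sqrt d\,(N\|m\|_2)^{d-1}$, because the case $k<d$ only helps: I may assume $N\|m\|_2\ge1$, since otherwise all $Y^{(n)}$ would be singletons. Dividing by $N$ then yields
\[
\gamma(\ell_{\mathrm{sub}})\ge \frac{1}{N^d\sqrt d\,\|m\|_2^{d-1}}.
\]
Finally, $\sqrt d\le\max(d-1,\sqrt2)$ for all $d\ge2$: equality holds at $d=2$, and $\sqrt d\le d-1$ for $d\ge3$. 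This gives the stated bound.

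\textbf{Main obstacle.} I expect Step 3 to be the delicate step. It requires checking that a vertex optimum really produces a \emph{square nonsingular} integer system after the zero coordinates are eliminated, so that the degenerate case $k=1$ is also covered (there $M=(1)$ and $s^\star$ is a positive integer). It also requires checking that the numerator is a nonzero integer, not merely a rational. If the vertex argument becomes messy, I would instead argue via the dual form \cref{eq:gamma_simplex}: the minimiser lies on a face of $\Conv(W)$ and is cut out by integer equations, and the same Cramer and Hadamard estimate applies.
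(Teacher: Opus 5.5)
Your argument is correct. It follows a route dual to the paper's, and it yields a slightly sharper constant.

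\textbf{The paper's route.} The paper works with the min--max form \cref{eq:gamma_simplex}. It passes to the downward closure $\Conv(W)-\mathbb{R}_{\ge0}^d$ and picks a facet violated by $P$. It shows that this facet's normal can be taken nonnegative, and builds an integral normal $a$ from cofactors of difference vectors, using the recession directions $-e_i$ where $a_i=0$. It then bounds $\max_i(P_i-q_i)\ge 1/\sum_j a_j$, estimating each cofactor separately by Hadamard and summing via \Cref{prop:max_1-x_on_simplex}.

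\textbf{Your route.} You take an optimal basic solution of the max--min LP and read off $\theta^\star$ by Cramer's rule. Your eliminated coordinates $\theta^\star_i=0$ play the role of the paper's recession directions $-e_i$. Your $\det M$, whose first row is all ones, is the cofactor expansion the paper calls $\pm\sum_j a_j$.

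\textbf{What each route buys.} Your version needs no facet or sign argument. It also needs no separate handling of zero-range coordinates or of $d'=1$; these are absorbed into the eliminated coordinates and the case $k=1$. Applying Hadamard once to $M$ gives $\gamma(\ell_{\mathrm{sub}})\ge 1/(N^d\sqrt d\,\|m\|_2^{d-1})$, which is strictly stronger than the stated bound for $d\ge3$. The paper's separating-hyperplane template buys uniformity: it also proves the unit-ball bound (\Cref{theo:gamma_lowerbound_on_ball}), where $\Theta$ is not polyhedral and your vertex argument does not apply.

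\textbf{Two small repairs.}
\begin{enumerate}
\item The feasible set in $(\theta,s)$ is not a polytope, because $s$ is unbounded below. It is a pointed polyhedron, with recession cone $\{(0,t)\mid t\le 0\}$, so an optimal vertex still exists. Alternatively, add the constraint $s\ge 0$, which is inactive at the optimum since $s^\star>0$.
\item The nonsingularity you flagged does hold. Choose a basis of active constraints containing $\mathbf 1^\top\theta=1$. Subtract the $w_0$-row from the other $w$-rows. Expand along the $s$-column, then along the unit rows $e_i^\top$ for $i\in Z$. This shows the basis determinant is $\pm\det M$, and the count $1+|Z|+|J|=d+1$ makes $M$ exactly $k\times k$.
\end{enumerate}
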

For the proof, see \Cref{subsec:proof_of_theo:gamma_lowerbound_on_simplex}.

\begin{remark}
    \label{rem:nondegeneracy_simplex}
    If the nondegeneracy assumption fails, that is, if $Y^{(n)}$ is a singleton for every $n$, then the index set of the minimum in \cref{eq:gamma_SL} is empty, so under the convention that the minimum over the empty set is $+\infty$ we have $\gamma(\ell_{\mathrm{sub}})=+\infty$, and the lower bound above holds trivially (in this case $f(\cdot, s^{(n)})$ is constant on $X(s^{(n)})$ for each $n$, so $\ell_{\mathrm{sub}}\equiv0$ and every $\theta\in\Theta$ solves \cref{eq:IOP_linear}).
    We also note that, in the present theorem, the full-dimensionality assumption $\dim\Conv(W)=d$ imposed in the unit-ball case (\Cref{theo:gamma_lowerbound_on_ball}) can be replaced by nondegeneracy alone: the downward closure $Q=\Conv(W)-\mathbb{R}_{\geq0}^d$ used in the proof (\Cref{subsec:proof_of_theo:gamma_lowerbound_on_simplex}) is automatically full-dimensional as soon as $\Conv(W)\neq\emptyset$.
\end{remark}

\subsection{\texorpdfstring{$\gamma(\ell_{\mathrm{sub}})$ for discrete feasible regions}{gamma for discrete feasible regions}}

For a bounded discrete set $X \subset \mathbb{Z}^d$, a set $\mathcal{T}$ is a \emph{test set} of $X$ if the following holds: for any $x^1 \in X$, $\theta \in \Theta$ with $\theta^\top x^1 < \max_{x \in X} \theta^\top x$, there exists $g \in \mathcal{T}$ such that
\[
    x^1 + g \in X , \quad \theta^\top g > 0 .
\]

\begin{proposition}
    \label{prop:discrete_gradient_descent}
    Take a bounded discrete set $X \subset \mathbb{Z}^d$ (which is finite since it is bounded) and a test set $\mathcal{T}$ of $X$. Then, for any $x^1 \in X$ and $\theta \in \Theta$, there exist some $x^* \in \argmax_{x \in X} \theta^\top x$ and some $g^1 , \ldots , g^r \in \mathcal{T}$ such that
    \[
        x^* - x^1 = \sum_{i=1}^r g^i, \quad  \theta^\top g^i >0 .
    \]
\end{proposition}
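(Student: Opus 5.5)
We argue by a finite augmentation (greedy-ascent) procedure on $X$, using the test-set property repeatedly and the finiteness of $X$ to force termination. Fix $x^1\in X$ and $\theta\in\Theta$. Set $y^0:=x^1$ and build a sequence $y^0,y^1,\ldots$ inductively. At step $k$, if $y^k\in\argmax_{x\in X}\theta^\top x$ we stop. Otherwise $\theta^\top y^k<\max_{x\in X}\theta^\top x$, so the defining property of the test set $\mathcal{T}$ gives some $g^{k+1}\in\mathcal{T}$ with $y^k+g^{k+1}\in X$ and $\theta^\top g^{k+1}>0$. We then put $y^{k+1}:=y^k+g^{k+1}\in X$.

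The key step is termination. Along the sequence, the values $\theta^\top y^k$ are strictly increasing, since $\theta^\top y^{k+1}=\theta^\top y^k+\theta^\top g^{k+1}>\theta^\top y^k$. Hence the points $y^k$ are pairwise distinct elements of $X$. Because $X\subset\mathbb{Z}^d$ is bounded, it is finite, so the process cannot continue beyond $|X|-1$ steps. It therefore stops at some index $r\le|X|-1$ with $y^r\in\argmax_{x\in X}\theta^\top x$. Set $x^*:=y^r$.

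Telescoping then yields $x^*-x^1=\sum_{k=1}^r (y^k-y^{k-1})=\sum_{k=1}^r g^k$, with each $g^k\in\mathcal{T}$ and $\theta^\top g^k>0$, which is exactly the claimed decomposition. If $x^1$ is already optimal, we take $x^*=x^1$ and $r=0$, the empty sum; the statement should be read as allowing this degenerate case.

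The only point needing care is termination, and strict monotonicity of the objective on a finite set settles it. The values can be real rather than integral (since $\theta$ is real), but that is irrelevant: we use distinctness of the iterates, not a fixed increment in the objective. No earlier result of the paper is needed beyond the definition of a test set.
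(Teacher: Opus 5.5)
Your proof is correct and is essentially the paper's argument. Both iterate the test-set augmentation from $x^1$, use the strict increase of $\theta^\top y^k$ to get pairwise distinct points of the finite set $X$ (so the process stops at a maximizer), telescope the steps, and let $r=0$ cover the already-optimal case; your explicit bound $r\le |X|-1$ is a small extra detail that changes nothing.
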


\begin{proof}
    The set $X \subset \mathbb{Z}^d$ is bounded, hence finite. We construct a sequence $\{x^{(k)}\}_{k\ge0}$ inductively. Set $x^{(0)} := x^1$. As long as $x^{(k)} \in X$ satisfies $\theta^\top x^{(k)} < \max_{x \in X} \theta^\top x$, the definition of a test set yields some $g^{(k+1)} \in \mathcal{T}$ with
    \[
        x^{(k+1)} := x^{(k)} + g^{(k+1)} \in X, \qquad \theta^\top g^{(k+1)} > 0 .
    \]
    Then $\theta^\top x^{(k+1)} = \theta^\top x^{(k)} + \theta^\top g^{(k+1)} > \theta^\top x^{(k)}$, so $\{\theta^\top x^{(k)}\}_k$ is strictly increasing; in particular $x^{(0)}, x^{(1)}, \ldots$ are pairwise distinct points of $X$. Since $X$ is finite, this construction stops after finitely many steps $r \ge 0$. The stopping point $x^{(r)}$ does not satisfy the continuation condition $\theta^\top x^{(r)} < \max_{x \in X}\theta^\top x$, i.e., $\theta^\top x^{(r)} = \max_{x \in X}\theta^\top x$, so $x^{(r)} \in \argmax_{x \in X}\theta^\top x$. Setting $x^* := x^{(r)}$ and $g^i := g^{(i)}$ ($i = 1, \ldots, r$), telescoping gives
    \[
        x^* - x^1 = \sum_{i=1}^r \bigl(x^{(i)} - x^{(i-1)}\bigr) = \sum_{i=1}^r g^i,
        \qquad \theta^\top g^i > 0 \quad (i = 1, \ldots, r)
    \]
    (if $x^1$ is already optimal, then $r=0$ and the sum is empty).
\end{proof}

\begin{lemma}[Reduction of the lower bound via a test set]
    \label{lem:gamma_test_set_reduction}
    Suppose that \Cref{assu:WIRL,assu:WIRL-uniqueness} hold, and assume in addition that, for each $n$, $f(x,s^{(n)}) =x$, $Y^{(n)} \subseteq X(s^{(n)})$ and that $\mathcal{T}\subset\mathbb{Z}^d$ is a test set of $X(s^{(n)})$. Let $\theta^*$ be the true weight. Then, for any $\theta^\dagger\in\Theta$,
    \[
        \gamma(\ell_\mathrm{sub})
        \ \ge\ \frac{1}{N}\min_{g\in\mathcal{T},\,\theta^{*\top}g>0}\theta^{\dagger\top}g .
    \]
\end{lemma}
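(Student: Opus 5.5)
The plan is to evaluate the max in \cref{eq:gamma_SL} at the particular weight $\theta=\theta^\dagger$, which gives
\[
\gamma(\ell_\mathrm{sub})\ \ge\ \frac1N\min_{(a^n)_n\in\prod_n Y^{(n)}\setminus\{(a^{(n)})_n\}}\ \sum_{n=1}^N\theta^{\dagger\top}\bigl(a^{(n)}-a^n\bigr).
\]
It then suffices to show that every admissible tuple satisfies $\sum_n\theta^{\dagger\top}(a^{(n)}-a^n)\ge \mu$, where $\mu:=\min_{g\in\mathcal{T},\,\theta^{*\top}g>0}\theta^{\dagger\top}g$.

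First I dispose of the case $\mu<0$. In that case the claimed bound is negative, and it holds trivially because $\gamma(\ell_\mathrm{sub})>0$ by \Cref{prop:grad_based_opt_achieve_min_SL_before}. From now on I assume $\mu\ge0$.

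Next I use uniqueness. Since $f(x,s^{(n)})=x$, \Cref{assu:WIRL-uniqueness} says that $a^{(n)}=x^{(n)}$ is the unique maximizer of $\theta^{*\top}x$ over $X(s^{(n)})$.

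Now fix an admissible tuple $(a^n)_n$ and an index $n$. Because $Y^{(n)}\subseteq X(s^{(n)})$, we have $a^n\in X(s^{(n)})$. I apply \Cref{prop:discrete_gradient_descent} with $X=X(s^{(n)})$, $x^1=a^n$, $\theta=\theta^*$ and the test set $\mathcal{T}$. By uniqueness of the maximizer, the point $x^*$ it returns must equal $a^{(n)}$. Hence
\[
a^{(n)}-a^n=\sum_{i=1}^{r_n}g^{n,i},\qquad g^{n,i}\in\mathcal{T},\quad \theta^{*\top}g^{n,i}>0,
\]
with $r_n=0$ exactly when $a^n=a^{(n)}$.

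Pairing with $\theta^\dagger$ gives $\theta^{\dagger\top}(a^{(n)}-a^n)=\sum_i\theta^{\dagger\top}g^{n,i}\ge r_n\mu$. Since $\mu\ge0$, each of these terms is nonnegative. The tuple differs from $(a^{(n)})_n$, so some $n_0$ has $r_{n_0}\ge1$. Summing over $n$ therefore yields $\sum_n\theta^{\dagger\top}(a^{(n)}-a^n)\ge r_{n_0}\mu\ge\mu$. Taking the minimum over admissible tuples and dividing by $N$ gives the claim.

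The delicate point is the sign issue. The chain $\sum_i\theta^{\dagger\top}g^{n,i}\ge\mu$ needs $\mu\ge0$, both to absorb multiple steps ($r_n\ge2$) and to discard the contributions of the other samples. This is exactly why the negative case is handled separately via positivity of $\gamma$. The other point to check is that \Cref{prop:discrete_gradient_descent} is applied at $\theta^*\in\Theta$, so the test-set property of $\mathcal{T}$ is available for that weight.
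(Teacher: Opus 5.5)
Your proposal is correct and follows essentially the same route as the paper. You evaluate the max at $\theta^\dagger$, decompose $a^{(n)}-a^n$ via \Cref{prop:discrete_gradient_descent} toward the unique $\theta^*$-maximizer $a^{(n)}$, and bound each test-set step below by $\mu$. The only cosmetic difference is the sign case: you dispose of $\mu<0$ using $\gamma(\ell_\mathrm{sub})>0$ and keep $\mu=0$ in the main argument, whereas the paper discards $\mu\le 0$ using $\gamma(\ell_\mathrm{sub})\ge 0$ and assumes $\mu>0$.
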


\begin{proof}
    If the right-hand side is nonpositive, the claim is trivial since $\gamma(\ell_\mathrm{sub})\ge0$; hence we may assume $\mu:=\min_{g\in\mathcal{T},\,\theta^{*\top}g>0}\theta^{\dagger\top}g>0$. Choosing $\theta=\theta^\dagger$ in the $\max_\theta$ of \eqref{eq:gamma_SL} gives
    \[
        \gamma(\ell_\mathrm{sub})
        \ \ge\ \frac{1}{N}\min_{(a^n)_n\ne(a^{(n)})_n}\ \sum_{n=1}^N\theta^{\dagger\top}(a^{(n)}-a^n).
    \]
    Fix any $(a^n)_n\ne(a^{(n)})_n$. By \Cref{assu:WIRL-uniqueness}, each $a^{(n)}=a^*(\theta^*,s^{(n)})$ is the unique $\theta^*$-maximizer over $Y^{(n)}$, so $\theta^{*\top}(a^{(n)}-a^n)\ge0$. Moreover $a^{(n)}=a^*(\theta^*,s^{(n)})$ is the (by \Cref{assu:WIRL-uniqueness}, unique) maximizer of $\theta^*$ over $X(s^{(n)})$, and $a^n\in X(s^{(n)})$. Applying \Cref{prop:discrete_gradient_descent} with $X=X(s^{(n)})$, $\theta=\theta^*$, and $x^1=a^n$, we obtain a decomposition toward the maximizer $a^{(n)}$, namely $a^{(n)}-a^n=\sum_i g^{n,i}$ with $g^{n,i}\in\mathcal{T}$ and $\theta^{*\top}g^{n,i}>0$. Therefore
    \[
        a^{(n)}-a^n=\sum_k\alpha^n_k g^{n,k},
        \quad \alpha^n_k\in\mathbb{Z}_{\ge0},\ g^{n,k}\in\mathcal{T},\ \theta^{*\top}g^{n,k}>0 .
    \]
    Hence $\theta^{\dagger\top}g^{n,k}\ge \mu$, so
    \[
        \sum_{n}\theta^{\dagger\top}(a^{(n)}-a^n)
        =\sum_n\sum_k\alpha^n_k\,\theta^{\dagger\top}g^{n,k}
        \ \ge\ \mu\sum_n\sum_k\alpha^n_k .
    \]
    Since the tuple differs from $(a^{(n)})_n$, there is some $n_0$ with $a^{n_0}\ne a^{(n_0)}$, hence $\sum_n\sum_k\alpha^n_k\ge1$. Therefore $\sum_n\theta^{\dagger\top}(a^{(n)}-a^n)\ge \mu$. Since $(a^n)_n$ was arbitrary, the claim follows.
\end{proof}

\subsection{\texorpdfstring{M-convex sets and linear programming}{M-convex sets and linear programming}}

\begin{definition}[M-convex set, {\citealp[cf.][]{murota2003discrete}}]
    \label{def:M-convex}
    A nonempty set $X\subseteq\mathbb{Z}^d$ is an \emph{M-convex set} if it satisfies the following exchange axiom: for all $x,y\in X$ and every index $i$ with $x_i>y_i$, there exists an index $j$ with $x_j<y_j$ such that $x-e_i+e_j\in X$ and $y+e_i-e_j\in X$ (where $e_i$ is the $i$-th unit vector). Every element of an M-convex set has the same coordinate sum $\sum_i x_i$, and M-convex sets coincide with the sets of integer points of integral base polytopes.
\end{definition}

\begin{assumption}[M-convex action set with identity features]
    \label{assu:M-convex-projection}
    Let $d_{\mathcal{X}} = d$. For each $s \in \mathcal{S}$, let $X(s) \subseteq \mathbb{Z}^d$ be an M-convex set in $\mathbb{Z}^d$, and let the feature map be the identity, i.e., $f_i(x, s) = x_i$ ($i = 1, \ldots, d$).
\end{assumption}

The indicator function of an M-convex set $X$,
$
    \delta_X (x)
    := 
        0 , \text{ if } x \in X , \,
        \infty , \text{ if } x \not \in X,
$
is an M-convex function. The following is known.

\begin{proposition}[{\citealp{murota1996convexity,murota1998discrete,murota2003discrete}}]
    \label{prop:M-convex_test_set}
    A test set of an M-convex set can be taken to be the set of single-exchange vectors $\mathcal{T} = \{ e_i - e_j \mid i \neq j \}$.
\end{proposition}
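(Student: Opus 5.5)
We prove the test-set property directly from the exchange axiom of \Cref{def:M-convex}, by a minimal-distance argument. Fix an M-convex set $X$, a point $x^1\in X$ and a weight $\theta$ with $\theta^\top x^1<\max_{x\in X}\theta^\top x$. We must exhibit indices $i\neq j$ with $x^1+e_i-e_j\in X$ and $\theta^\top(e_i-e_j)=\theta_i-\theta_j>0$. Since $X$ is bounded and discrete it is finite, so the maximizer set $\argmax_{x\in X}\theta^\top x$ is nonempty. Among all maximizers we choose one, $y$, that minimizes $\|y-x^1\|_1$. The argument uses no property of $\Theta$, so it applies to any $\theta\in\mathbb{R}^d$.

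Since $\theta^\top y>\theta^\top x^1$, we have $y\neq x^1$. All elements of an M-convex set have the same coordinate sum, so there is an index $i$ with $y_i>x^1_i$. We apply the exchange axiom to the pair $(y,x^1)$ and this index $i$. It yields an index $j$ with $y_j<x^1_j$ (so $j\neq i$) such that
\[
    y-e_i+e_j\in X \qquad\text{and}\qquad x^1+e_i-e_j\in X .
\]
The second membership is exactly the feasibility half of the test-set condition for the vector $e_i-e_j$. It remains to show $\theta_i>\theta_j$.

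Suppose instead that $\theta_i\le\theta_j$; we split into two cases. If $\theta_i<\theta_j$, then $\theta^\top(y-e_i+e_j)>\theta^\top y$, contradicting the optimality of $y$. If $\theta_i=\theta_j$, then $y':=y-e_i+e_j$ is again a maximizer. Because $y_i>x^1_i$ and $y_j<x^1_j$, moving one unit down in coordinate $i$ and up in coordinate $j$ gives $\|y'-x^1\|_1=\|y-x^1\|_1-2$. This contradicts the choice of $y$ as a maximizer closest to $x^1$. Hence $\theta_i>\theta_j$, and $g=e_i-e_j\in\mathcal{T}$ satisfies $x^1+g\in X$ and $\theta^\top g>0$, as required.

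The main point requiring care is the tie case $\theta_i=\theta_j$. A naive argument that picks an arbitrary maximizer $y$ stalls there, because the exchange then produces a vector that is feasible but not strictly improving. Choosing $y$ to be $\ell_1$-closest to $x^1$ is precisely what turns the tie case into a contradiction. The same argument is the standard proof that local optimality implies global optimality for M-convex functions; we apply it to the M-convex indicator $\delta_X$ plus a linear term.
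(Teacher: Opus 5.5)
Your proof is correct: taking the maximizer $y$ that is $\ell_1$-closest to $x^1$ and applying the exchange axiom to $(y,x^1)$ at an index $i$ with $y_i>x^1_i$ forces $\theta_i>\theta_j$, since $\theta_i<\theta_j$ would contradict maximality of $y$ and $\theta_i=\theta_j$ would contradict its $\ell_1$-minimality, so $e_i-e_j$ is a feasible improving move from $x^1$. The paper gives no argument of its own; it cites Murota's local-to-global optimality criterion for M-convex functions, applied to $\delta_X$ together with the linear term, and your argument is exactly the standard proof of that criterion specialized to a linear objective, i.e., a self-contained version of the same route.
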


\subsubsection{\texorpdfstring{The case where $\Theta$ is the unit ball}{The case of the unit ball}}

\begin{theorem}
    \label{theo:gamma_lowerbound_on_ball_Mconvex}
    Suppose that \Cref{assu:WIRL,assu:WIRL-uniqueness,assu:M-convex-projection} hold, and let $\Theta = B^{d} = \{\theta\in\mathbb{R}^d \mid \|\theta\|_2\le 1\}$ with $d\ge 2$. Then
    \[
        \gamma (\ell_\mathrm{sub})
        \geq \frac{1}{N}\cdot\frac{2\sqrt{3}}{\sqrt{d(d^2-1)}}
        = \Omega\!\left(\frac{1}{N\,d^{3/2}}\right).
    \]
\end{theorem}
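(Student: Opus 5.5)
The plan is to apply \Cref{lem:gamma_test_set_reduction} with the single-exchange test set $\mathcal{T}=\{e_i-e_j\mid i\neq j\}$ of \Cref{prop:M-convex_test_set}, and to choose the auxiliary weight $\theta^\dagger\in B^d$ by hand.

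First I would check the hypotheses of the lemma. Under \Cref{assu:M-convex-projection}, $f(x,s^{(n)})=x$. Each $X(s^{(n)})$ is a finite subset of $\mathbb{Z}^d$, being a bounded discrete set by \Cref{assu:WIRL}. The vertices of $\Conv X(s^{(n)})$ are points of $X(s^{(n)})$, so $Y^{(n)}\subseteq X(s^{(n)})$. By \Cref{prop:M-convex_test_set}, $\mathcal{T}$ is a test set of every $X(s^{(n)})$ simultaneously. Hence, for any $\theta^\dagger\in B^d$,
\[
\gamma(\ell_{\mathrm{sub}})\ \ge\ \frac1N\min\bigl\{\theta^{\dagger}_i-\theta^{\dagger}_j \;\bigm|\; i\neq j,\ \theta^*_i>\theta^*_j\bigr\}.
\]
If this index set is empty, the right-hand side is $+\infty$; that case is handled by the convention for an empty minimum, or by noting that $\gamma\ge0$ and treating it separately.

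Second, I would construct $\theta^\dagger$ from the ordering of $\theta^*$. Order the coordinates so that $\theta^*_{\sigma(1)}\le\cdots\le\theta^*_{\sigma(d)}$, breaking ties arbitrarily, and let $r_i=\sigma^{-1}(i)\in\{1,\dots,d\}$ be the rank of $i$. The ranks are distinct, and $\theta^*_i>\theta^*_j$ forces $r_i>r_j$, hence $r_i-r_j\ge1$. Set
\[
\theta^\dagger_i=c\Bigl(r_i-\tfrac{d+1}{2}\Bigr).
\]
Centering the ranks minimizes the Euclidean norm among shifts, and
\[
\sum_{k=1}^d\Bigl(k-\tfrac{d+1}{2}\Bigr)^2=\frac{d(d^2-1)}{12}.
\]
This sum is positive because $d\ge2$. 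Choosing $c=\sqrt{12/(d(d^2-1))}=2\sqrt3/\sqrt{d(d^2-1)}$ gives $\|\theta^\dagger\|_2=1$, so $\theta^\dagger\in B^d$.

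Finally, whenever $\theta^*_i>\theta^*_j$ we get $\theta^\dagger_i-\theta^\dagger_j=c(r_i-r_j)\ge c$. Substituting into the reduction yields $\gamma(\ell_{\mathrm{sub}})\ge c/N$, which is the claimed bound, and $c=\Theta(d^{-3/2})$ gives the $\Omega$ form.

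The only delicate point is the tie-breaking in $\theta^*$. Pairs with $\theta^*_i=\theta^*_j$ impose no constraint in the lemma, so an arbitrary strict ranking is harmless. The rest is the variance computation for the discrete uniform distribution on $\{1,\dots,d\}$.
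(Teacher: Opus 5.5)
Your proposal is correct and is essentially the paper's proof. The paper likewise applies \Cref{lem:gamma_test_set_reduction} with the exchange test set $\{e_i-e_j\}$. It sorts $\theta^*$ in decreasing order and takes the centered rank vector $\theta^\dagger_i=\alpha\bigl((d+1)/2-i\bigr)$ with $\alpha=2\sqrt3/\sqrt{d(d^2-1)}$, so that every improving exchange direction satisfies $\theta^{\dagger\top}g=(j-i)\alpha\ge\alpha$. Your ascending-rank convention, the explicit check that $Y^{(n)}\subseteq X(s^{(n)})$, and your remark on the empty index set are just slightly more careful bookkeeping of the same argument.
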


\begin{proof}
    By \Cref{prop:M-convex_test_set}, $\mathcal{T}=\{e_i-e_j\}$ is a test set. Reordering coordinates so that $\theta^*_1\ge\cdots\ge\theta^*_d$, we have $\theta^{*\top}(e_i-e_j)=\theta^*_i-\theta^*_j>0$ only for $i<j$. Set $\theta^\dagger_i:=\alpha\bigl((d+1)/2-i\bigr)$ with $\alpha:=2\sqrt3/\sqrt{d(d^2-1)}$; then $\|\theta^\dagger\|_2=1$, i.e., $\theta^\dagger\in B^d$. For each $g=e_i-e_j$ ($i<j$) with $\theta^{*\top}g>0$,
    \[
        \theta^{\dagger\top}g=\theta^\dagger_i-\theta^\dagger_j=(j-i)\alpha\ge\alpha .
    \]
    Hence, by \Cref{lem:gamma_test_set_reduction}, $\gamma(\ell_\mathrm{sub})\ge\alpha/N$.
\end{proof}

\subsubsection{\texorpdfstring{The case where $\Theta$ is the probability simplex}{The case of the probability simplex}}

\begin{theorem}
    \label{theo:gamma_lowerbound_on_simplex_Mconvex}
    Suppose that \Cref{assu:WIRL,assu:WIRL-uniqueness,assu:M-convex-projection} hold, and let $\Theta = \Delta^{d-1} = \{\theta\in\mathbb{R}_{\ge0}^d \mid \sum_{i=1}^d\theta_i = 1\}$ with $d\ge 2$. Then
    \[
        \gamma (\ell_\mathrm{sub})
        \geq \frac{1}{N}\cdot\frac{2}{d(d-1)}
        = \Omega\!\left(\frac{1}{N\,d^{2}}\right).
    \]
\end{theorem}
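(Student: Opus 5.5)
The argument mirrors the unit-ball case (\Cref{theo:gamma_lowerbound_on_ball_Mconvex}): apply \Cref{lem:gamma_test_set_reduction} with the exchange test set $\mathcal{T}=\{e_i-e_j\mid i\neq j\}$ from \Cref{prop:M-convex_test_set}, and choose a simplex point $\theta^\dagger$ adapted to the ordering of $\theta^*$.

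\textbf{Checking the lemma's hypotheses.} With identity features, $Y^{(n)}$ is the vertex set of $\Conv X(s^{(n)})$. Since $X(s^{(n)})$ is finite, these vertices are points of $X(s^{(n)})$, so $Y^{(n)}\subseteq X(s^{(n)})$.

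\textbf{Ordering.} Relabel coordinates so that $\theta^*_1\ge\cdots\ge\theta^*_d$. Then $\theta^{*\top}(e_i-e_j)>0$ forces $\theta^*_i>\theta^*_j$. This is impossible for $i>j$, since then $\theta^*_i\le\theta^*_j$. Hence only pairs with $i<j$ appear in the minimum of \Cref{lem:gamma_test_set_reduction}.

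\textbf{Choice of $\theta^\dagger$.} The natural choice is a linearly decreasing weight that stays nonnegative:
\[
\theta^\dagger_i:=\alpha\,(d-i),\qquad \alpha:=\frac{2}{d(d-1)} .
\]
Then every $\theta^\dagger_i\ge0$. Also $\sum_{i=1}^d(d-i)=d(d-1)/2$, so $\sum_i\theta^\dagger_i=1$ and $\theta^\dagger\in\Delta^{d-1}$.

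For each admissible $g=e_i-e_j$ with $i<j$,
\[
\theta^{\dagger\top}g=\alpha\,(j-i)\ge\alpha .
\]
\Cref{lem:gamma_test_set_reduction} therefore yields $\gamma(\ell_\mathrm{sub})\ge\alpha/N=\frac{1}{N}\cdot\frac{2}{d(d-1)}$.

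\textbf{Degenerate case.} If no $g\in\mathcal{T}$ satisfies $\theta^{*\top}g>0$, the minimum in the lemma is over the empty set. Then the bound is vacuous or $+\infty$, and the claim is trivial as in \Cref{rem:nondegeneracy_simplex}.

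\textbf{Main obstacle.} There is no real obstacle; the only care needed is choosing $\theta^\dagger$. On the simplex, the centered linear profile used for the ball is unavailable because it takes negative values. Shifting it to end at $0$ keeps every gap $\theta^\dagger_i-\theta^\dagger_j$ at least $\alpha$ while normalizing the sum to $1$. This choice is also essentially optimal: the consecutive gaps must each be at least $\alpha$ and all entries nonnegative, so $\theta^\dagger_i\ge(d-i)\alpha$ and hence $1\ge\alpha\,d(d-1)/2$.
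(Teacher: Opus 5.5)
Your proposal is correct and is essentially the paper's proof. It uses the same exchange test set $\{e_i-e_j\}$, the same reordering of $\theta^*$, the same choice $\theta^\dagger_i=(d-i)\cdot\frac{2}{d(d-1)}$, and the same application of \Cref{lem:gamma_test_set_reduction}; your explicit check that $Y^{(n)}\subseteq X(s^{(n)})$ and your treatment of the degenerate case are small additions that the paper leaves implicit.
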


\begin{proof}
    By \Cref{prop:M-convex_test_set}, $\mathcal{T}=\{e_i-e_j\}$ is a test set. Reordering coordinates so that $\theta^*_1\ge\cdots\ge\theta^*_d$, we have $\theta^{*\top}(e_i-e_j)>0$ only for $i<j$. Set $\theta^\dagger_i:=(d-i)\cdot\tfrac{2}{d(d-1)}$ ($i=1,\ldots,d$); then $\theta^\dagger\ge0$ and $\sum_i\theta^\dagger_i=\tfrac{2}{d(d-1)}\sum_{i=1}^d(d-i)=1$, so $\theta^\dagger\in\Delta^{d-1}$. For each $g=e_i-e_j$ ($i<j$) with $\theta^{*\top}g>0$,
    \[
        \theta^{\dagger\top}g=\theta^\dagger_i-\theta^\dagger_j=(j-i)\cdot\tfrac{2}{d(d-1)}\ge\tfrac{2}{d(d-1)} .
    \]
    Hence, by \Cref{lem:gamma_test_set_reduction}, $\gamma(\ell_\mathrm{sub})\ge\tfrac{1}{N}\cdot\tfrac{2}{d(d-1)}$.
\end{proof}

\subsection{\texorpdfstring{M${}^\natural$-convex sets and linear programming}{M-natural-convex sets and linear programming}}

\begin{definition}[M${}^\natural$-convex set, {\citealp[cf.][]{murota2003discrete}}]
    \label{def:Mnatural-convex}
    A nonempty set $X\subseteq\mathbb{Z}^d$ is an \emph{M${}^\natural$-convex set} if it satisfies the following exchange axiom: for all $x,y\in X$ and every index $i$ with $x_i>y_i$, either (i) $x-e_i\in X$ and $y+e_i\in X$, or (ii) there exists an index $j$ with $x_j<y_j$ such that $x-e_i+e_j\in X$ and $y+e_i-e_j\in X$. M${}^\natural$-convex sets are obtained as coordinate projections of M-convex sets and coincide with the sets of integer points of generalized base polytopes.
\end{definition}

\begin{assumption}[M${}^\natural$-convex action set with identity features]
    \label{assu:Mnatural-convex-projection}
    Let $d_{\mathcal{X}} = d$. For each $s \in \mathcal{S}$, let $X(s) \subseteq \mathbb{Z}^d$ be an M${}^\natural$-convex set in $\mathbb{Z}^d$, and let the feature map be the identity, i.e., $f_i(x, s) = x_i$ ($i = 1, \ldots, d$).
\end{assumption}

The indicator function of an M${}^\natural$-convex set $X$,
$\delta_X (x)$
is an M${}^\natural$-convex function. The following is known.

\begin{proposition}[{\citealp{murota1999mconvex,murota2003discrete}}]
    \label{prop:Mnatural-convex_test_set}
    A test set of an M${}^\natural$-convex set can be taken to be the set of single-exchange vectors $\mathcal{T} = \{ e_i - e_j , \pm e_i \mid i \neq j \}$.
\end{proposition}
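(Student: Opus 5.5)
The plan is to reduce the claim to \Cref{prop:M-convex_test_set} by the standard lifting of an M${}^\natural$-convex set to an M-convex set in one more dimension. Given an M${}^\natural$-convex set $X\subseteq\mathbb{Z}^d$, define
\[
    \widetilde X:=\Bigl\{(x_0,x)\in\mathbb{Z}\times\mathbb{Z}^d \ \Bigm|\ x\in X,\ x_0=-\textstyle\sum_{i=1}^d x_i\Bigr\}\subseteq\mathbb{Z}^{d+1}.
\]
Coordinates are indexed by $0,1,\ldots,d$. The coordinate projection $\pi(x_0,x)=x$ is a bijection from $\widetilde X$ onto $X$, and $\widetilde X$ is bounded whenever $X$ is.

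\emph{Step 1: $\widetilde X$ is M-convex.} Take $\tilde x=(x_0,x)$ and $\tilde y=(y_0,y)$ in $\widetilde X$, and an index $i$ with $\tilde x_i>\tilde y_i$.
\begin{itemize}
\item If $i\ge1$, apply the M${}^\natural$ exchange axiom (\Cref{def:Mnatural-convex}) to $x,y$.
  \begin{itemize}
  \item In case (ii) we get $j\ge1$, and the move $\tilde x-e_i+e_j$ lifts directly, because the coordinate sum is unchanged.
  \item In case (i) we have $x-e_i\in X$ and $y+e_i\in X$. These lift to $\tilde x-e_i+e_0$ and $\tilde y+e_i-e_0$, so $j=0$ is the exchange index. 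We must check $x_0<y_0$. Since both lifted points have coordinate sum $0$ and $x_i>y_i$, if $x_0\ge y_0$ then some $k\ge1$ has $x_k<y_k$ and case (ii) is available anyway. Otherwise $x_0<y_0$ and $j=0$ is legitimate.
  \end{itemize}
\item If $i=0$, then $\sum_k x_k<\sum_k y_k$, so some $j\ge1$ has $x_j<y_j$. Apply the M${}^\natural$ axiom with the roles of $x$ and $y$ swapped at index $j$. This gives either $y-e_j,\ x+e_j\in X$, which lift to $\tilde x-e_0+e_j$ and $\tilde y+e_0-e_j$, or an exchange $y-e_j+e_k,\ x+e_j-e_k$ with $y_k<x_k$.
\end{itemize}
The last subcase does not immediately give a move involving $e_0$. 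This is the step I expect to be the main obstacle. I would handle it with the standard device: choose $j$ among the indices with $x_j<y_j$ so as to maximize, or argue by induction on $\|x-y\|_1$. Alternatively, I would simply cite Murota's theorem that M${}^\natural$-convex sets are exactly the projections of M-convex sets along a coordinate, which is already recalled in \Cref{def:Mnatural-convex}, and in fact in this specific form with $x_0=-\sum x_i$.

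\emph{Step 2: transfer the test set.} Fix $x^1\in X$ and $\theta\in\Theta$ with $\theta^\top x^1<\max_{x\in X}\theta^\top x$. Put $\tilde\theta:=(0,\theta)$, so that $\tilde\theta^\top\tilde x=\theta^\top\pi(\tilde x)$ for all $\tilde x\in\widetilde X$. Hence the lift $\tilde x^1$ of $x^1$ is not $\tilde\theta$-optimal in $\widetilde X$. The test-set property of \Cref{prop:M-convex_test_set} does not depend on the particular linear objective, so it gives $g=e_i-e_j$ with $i\ne j\in\{0,\ldots,d\}$ such that $\tilde x^1+g\in\widetilde X$ and $\tilde\theta^\top g>0$.

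Project with $\pi$. The vector $e_i-e_j$ with $i,j\ge1$ maps to itself, $e_0-e_j$ maps to $-e_j$, and $e_i-e_0$ maps to $e_i$. Since $\pi$ is linear on the lift, we get $x^1+\pi(g)\in X$ and $\theta^\top\pi(g)=\tilde\theta^\top g>0$. Moreover $\pi(g)\in\{e_i-e_j,\pm e_i\}$, which proves that this set is a test set of $X$.
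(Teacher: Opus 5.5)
Your Step~2 is sound. Once $\widetilde X$ is known to be M-convex, projecting the vectors $e_i-e_j$ of $\mathbb{Z}^{d+1}$ gives exactly $\{e_i-e_j,\pm e_i\}$. You also rightly note that the M-convex test-set property must be used for the arbitrary objective $\tilde\theta=(0,\theta)\in\mathbb{R}^{d+1}$, not for $\theta\in\Theta$ as in the paper's $\Theta$-relative definition of a test set.

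\textbf{The gap is in Step~1, and it is wider than the spot you flagged.}
\begin{itemize}
\item In the subcase $i\ge1$, alternative (i), $x_0\ge y_0$, you conclude that ``case (ii) is available anyway'' because some $k\ge1$ has $x_k<y_k$. The M$^\natural$ axiom is only a disjunction. Knowing that (i) holds and that such a $k$ exists tells you nothing about $x-e_i+e_k\in X$ and $y+e_i-e_k\in X$. The conclusion is in fact true, but only as a consequence of the lifting statement you are trying to prove.
\item In the subcase $i=0$ with the second alternative, the suggested induction on $\|x-y\|_1$ does not close. After the exchange you hold $y-e_j+e_k$ instead of $y$, so the induction hypothesis yields $y-e_j+e_k-e_l\in X$ rather than the required $y-e_l\in X$. ``Choose $j$ so as to maximize'' does not say what is maximized.
\end{itemize}
These two subcases are where the real work in the equivalence ``exchange axiom $\Leftrightarrow$ M-convexity of the lift'' lies. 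So Step~1 stands only through your fallback citation.

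That citation is acceptable at the paper's own level of rigor: the paper gives no proof here, only the reference to Murota. It also does yield your $\widetilde X$. If $X$ is the image of an M-convex $Y\subseteq\mathbb{Z}^{d+1}$ under the projection forgetting coordinate $0$, then the constant coordinate sum $r$ of $Y$ gives $Y=\widetilde X+re_0$, and translates of M-convex sets are M-convex.

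\textbf{A self-contained proof is available if you skip the lifting altogether.}
Fix $x^1\in X$ and $\theta$ with $\theta^\top x^1<\max_{x\in X}\theta^\top x$. Let $x^*$ be a $\theta$-maximizer minimizing $\|x^*-x^1\|_1$; it exists since $X$ is finite. Since $x^*\ne x^1$, pick $i$ with $x^*_i\ne x^1_i$.
\begin{itemize}
\item If $x^*_i>x^1_i$, apply the axiom to the pair $(x^*,x^1)$ at $i$.
\item If $x^1_i>x^*_i$, apply it to the pair $(x^1,x^*)$ at $i$.
\end{itemize}
In each of the four resulting cases you get some $g\in\{e_i,\ e_i-e_j,\ -e_i,\ e_j-e_i\}\subseteq\mathcal{T}$ with $x^1+g\in X$, $x^*-g\in X$, and $\|x^*-g-x^1\|_1<\|x^*-x^1\|_1$.

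Suppose $\theta^\top g\le0$. Then $\theta^\top(x^*-g)\ge\theta^\top x^*$, so $x^*-g$ is a maximizer closer to $x^1$, contradicting the choice of $x^*$. Hence $\theta^\top g>0$.

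This argument uses only the exchange axiom as stated in the paper and works for every $\theta\in\mathbb{R}^d$. Keeping only alternative (ii), it also proves the M-convex test-set proposition, so the detour through the lift buys nothing here.
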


\subsubsection{\texorpdfstring{The case where $\Theta$ is the unit ball}{The case of the unit ball}}

\begin{theorem}
    \label{theo:gamma_lowerbound_on_ball_Mnaturalconvex}
    Suppose that \Cref{assu:WIRL,assu:WIRL-uniqueness,assu:Mnatural-convex-projection} hold, and let $\Theta = B^{d} = \{\theta\in\mathbb{R}^d \mid \|\theta\|_2\le 1\}$ with $d\ge 2$. Then
    \[
        \gamma (\ell_\mathrm{sub})
        \geq \frac{1}{N}\sqrt{\frac{6}{d(d+1)(2d+1)}}
        \geq \frac{1}{N\,d^{3/2}}
        = \Omega\!\left(\frac{1}{N\,d^{3/2}}\right).
    \]
\end{theorem}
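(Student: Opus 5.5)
The plan is to argue exactly as in \Cref{theo:gamma_lowerbound_on_ball_Mconvex}: use the test set from \Cref{prop:Mnatural-convex_test_set} together with \Cref{lem:gamma_test_set_reduction}. The difference is that the test set now also contains the unit vectors $\pm e_i$. A weight $\theta^\dagger$ that only increases along the order of $\theta^*$ is therefore not enough; it must also carry the correct sign on each coordinate. Also note that $\mathcal{T}=\{e_i-e_j,\pm e_i\}$ does not depend on $n$, so it serves as a common test set for all $X(s^{(n)})$, as the lemma requires.

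\textbf{Step 1: the improving directions.} I first list the $g\in\mathcal{T}$ with $\theta^{*\top}g>0$. Reorder coordinates so that $\theta^*_1\ge\cdots\ge\theta^*_d$. Split the indices into three blocks: $P=\{i:\theta^*_i>0\}=\{1,\dots,p\}$, $Z=\{i:\theta^*_i=0\}$, and $M=\{i:\theta^*_i<0\}=\{d-q+1,\dots,d\}$. The improving directions are then:
\begin{itemize}
\item $e_i$ for $i\in P$;
\item $-e_i$ for $i\in M$;
\item $e_i-e_j$ whenever $\theta^*_i>\theta^*_j$, which forces $i<j$.
\end{itemize}

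\textbf{Step 2: the choice of $\theta^\dagger$.} With a scale $\alpha>0$ to be fixed later, set
\[
\theta^\dagger_i=\alpha(p+1-i)\ (i\in P),\qquad
\theta^\dagger_i=0\ (i\in Z),\qquad
\theta^\dagger_i=-\alpha\bigl(i-(d-q)\bigr)\ (i\in M).
\]
So $P$ receives the values $\alpha p,\dots,\alpha$, $Z$ receives $0$, and $M$ receives $-\alpha,\dots,-\alpha q$.

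\textbf{Step 3: each improving direction gains at least $\alpha$.} I check the three kinds of direction in turn.
\begin{itemize}
\item For $e_i$ with $i\in P$, we get $\theta^\dagger_i\ge\alpha$.
\item For $-e_i$ with $i\in M$, we get $-\theta^\dagger_i\ge\alpha$.
\item For $e_i-e_j$ with $\theta^*_i>\theta^*_j$, we have $i<j$, and $i,j$ cannot both lie in $Z$. The sequence $\theta^\dagger$ is nonincreasing in the index. It strictly drops by at least $\alpha$ at each step inside $P$ and inside $M$, and also across the boundaries $P\to Z$, $Z\to M$, and $P\to M$. Hence $\theta^\dagger_i-\theta^\dagger_j\ge\alpha$.
\end{itemize}
Ties of $\theta^*$ inside $P$ or $M$ cause no trouble, because a tie imposes no constraint and $\theta^\dagger$ is strictly decreasing there anyway. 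So $\min_{g\in\mathcal{T},\,\theta^{*\top}g>0}\theta^{\dagger\top}g\ge\alpha$.

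\textbf{Step 4: normalization and the final bound.} This is the main point to get right, since the norm must be controlled uniformly in $p$ and $q$. We have
\[
\|\theta^\dagger\|_2^2=\alpha^2\Bigl(\sum_{k=1}^p k^2+\sum_{k=1}^q k^2\Bigr)\le\alpha^2\sum_{k=1}^{d}k^2=\alpha^2\frac{d(d+1)(2d+1)}{6}.
\]
The inequality holds because $p+q\le d$ and $\sum_{k\le p}k^2+\sum_{k\le q}k^2\le\sum_{k\le p+q}k^2$. Choosing $\alpha=\sqrt{6/(d(d+1)(2d+1))}$ makes $\theta^\dagger\in B^d$. \Cref{lem:gamma_test_set_reduction} then gives $\gamma(\ell_{\mathrm{sub}})\ge\alpha/N$.

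The last inequality $\alpha\ge d^{-3/2}$ is equivalent to $(d+1)(2d+1)\le6d^2$, that is, $4d^2-3d-1\ge0$. This factors as $(4d+1)(d-1)\ge0$, which holds for all $d\ge1$.
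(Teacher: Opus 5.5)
Your proof is correct and follows the paper's argument: the same test set $\{e_i-e_j,\pm e_i\}$, the same sign-aware staircase vector $(p,\ldots,1,0,\ldots,0,-1,\ldots,-q)$, \Cref{lem:gamma_test_set_reduction}, and the same estimate $\sum_{k\le p}k^2+\sum_{k\le q}k^2\le\sum_{k\le d}k^2$. The only difference is that you scale by the uniform constant $\alpha$, whereas the paper normalizes to unit length with $c=\|s\|_2^{-1}\ge\alpha$; the final bound is the same, and your version has the small advantage that $\theta^*=0$ needs no separate treatment.
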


\begin{proof}
    By \Cref{prop:Mnatural-convex_test_set}, $\mathcal{T}=\{e_i-e_j,\,\pm e_i\}$ is a test set. Reorder coordinates so that $\theta^*_1\ge\cdots\ge\theta^*_d$, and let $p,z,n$ be the numbers of positive, zero, and negative components ($p+z+n=d$, with positive components first, zero components in the middle, and negative components last). Define an integer vector $s\in\mathbb{Z}^d$ by
    \[
        s=(\,p,p-1,\ldots,1,\ \underbrace{0,\ldots,0}_{z},\ -1,\ldots,-n\,),
    \]
    and set $\theta^\dagger:=c\,s$ with $c:=(\sum_i s_i^2)^{-1/2}$ (so $\|\theta^\dagger\|_2=1$, i.e., $\theta^\dagger\in B^d$). If $\theta^*=0$, then by \Cref{remark:mip} \Cref{assu:WIRL-uniqueness} fails unless every $X(s^{(n)})$ is a singleton, in which case the index set of the minimum in \cref{eq:gamma_SL} is empty and the bound holds trivially; hence $\theta^*\neq0$, so $s\neq0$ and $c$ is well-defined. The vectors $g\in\mathcal{T}$ with $\theta^{*\top}g>0$ are of the following three types, each satisfying $\theta^{\dagger\top}g\ge c$:
    \begin{itemize}
        \item $g=e_i-e_j$ (with $\theta^*_i>\theta^*_j$): since $s_i-s_j\ge1$, $\theta^{\dagger\top}g=c(s_i-s_j)\ge c$;
        \item $g=+e_i$ (with $\theta^*_i>0$): since $s_i\ge1$, $\theta^{\dagger\top}g=cs_i\ge c$;
        \item $g=-e_i$ (with $\theta^*_i<0$): since $-s_i\ge1$, $\theta^{\dagger\top}g=c(-s_i)\ge c$.
    \end{itemize}
    Hence, by \Cref{lem:gamma_test_set_reduction}, $\gamma(\ell_\mathrm{sub})\ge c/N$. Finally, from $\sum_i s_i^2=\sum_{k=1}^p k^2+\sum_{k=1}^n k^2\le\sum_{k=1}^d k^2=\tfrac{d(d+1)(2d+1)}{6}$ (as $p+n\le d$), together with $d(d+1)(2d+1)\le6d^3$, we obtain
    \[
        c\ge\sqrt{\tfrac{6}{d(d+1)(2d+1)}}\ge d^{-3/2} .
    \]
\end{proof}

\subsubsection{\texorpdfstring{The case where $\Theta$ is the probability simplex}{The case of the probability simplex}}

\begin{theorem}
    \label{theo:gamma_lowerbound_on_simplex_Mnaturalconvex}
    Suppose that \Cref{assu:WIRL,assu:WIRL-uniqueness,assu:Mnatural-convex-projection} hold, and let $\Theta = \Delta^{d-1} = \{\theta\in\mathbb{R}_{\ge0}^d \mid \sum_{i=1}^d\theta_i=1\}$ with $d\ge 2$. Then
    \[
        \gamma (\ell_\mathrm{sub})
        \geq \frac{1}{N}\cdot\frac{2}{d(d+1)}
        = \Omega\!\left(\frac{1}{N\,d^{2}}\right).
    \]
\end{theorem}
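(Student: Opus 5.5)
The plan mirrors the proof of \Cref{theo:gamma_lowerbound_on_simplex_Mconvex}. We combine the test set of \Cref{prop:Mnatural-convex_test_set} with the reduction \Cref{lem:gamma_test_set_reduction}, choosing a staircase weight $\theta^\dagger\in\Delta^{d-1}$ adapted to the ordering of $\theta^*$. The lemma's hypotheses hold here. The features are the identity, and $Y^{(n)}\subseteq X(s^{(n)})$ because the vertices of the convex hull of a finite set belong to that set. Also, $\mathcal{T}=\{e_i-e_j,\pm e_i\}$ is a common test set for every M$^\natural$-convex $X(s^{(n)})$. It therefore suffices to exhibit $\theta^\dagger\in\Delta^{d-1}$ with $\theta^{\dagger\top}g\ge \frac{2}{d(d+1)}$ for every $g\in\mathcal{T}$ with $\theta^{*\top}g>0$.

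First we identify which test vectors are relevant. Since $\theta^*\in\Delta^{d-1}$, we have $\theta^*\ge0$, so $\theta^{*\top}(-e_i)=-\theta^*_i\le 0$. Hence the vectors $-e_i$ never satisfy $\theta^{*\top}g>0$. This matters, because any $\theta^\dagger\ge0$ would give them a nonpositive value. The relevant vectors are therefore:
\begin{itemize}
\item $g=e_i$ with $\theta^*_i>0$;
\item $g=e_i-e_j$ with $\theta^*_i>\theta^*_j$.
\end{itemize}
Reorder coordinates so that $\theta^*_1\ge\cdots\ge\theta^*_d$. Then $\theta^*_i>\theta^*_j$ forces $i<j$, since ties give no strict inequality.

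Next we choose the staircase
\[
\theta^\dagger_i:=(d+1-i)\cdot\frac{2}{d(d+1)},\qquad i=1,\ldots,d .
\]
Its entries are positive, and they sum to $\frac{2}{d(d+1)}\cdot\frac{d(d+1)}{2}=1$, so $\theta^\dagger\in\Delta^{d-1}$. We check the two cases.
\begin{itemize}
\item For $g=e_i-e_j$ with $i<j$, we get $\theta^{\dagger\top}g=(j-i)\frac{2}{d(d+1)}\ge\frac{2}{d(d+1)}$.
\item For $g=e_i$, we get $\theta^{\dagger\top}g=\theta^\dagger_i\ge\theta^\dagger_d=\frac{2}{d(d+1)}$.
\end{itemize}
The staircase ends at level $1$ rather than $0$, unlike the M-convex case. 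This shift is exactly what accommodates the extra vectors $+e_i$, and it explains why the constant changes from $\frac{2}{d(d-1)}$ to $\frac{2}{d(d+1)}$.

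Applying \Cref{lem:gamma_test_set_reduction} then yields $\gamma(\ell_{\mathrm{sub}})\ge\frac1N\cdot\frac{2}{d(d+1)}$. The only delicate point is the observation that the vectors $-e_i$ drop out on the simplex. This avoids the sign-splitting used in the unit-ball case (\Cref{theo:gamma_lowerbound_on_ball_Mnaturalconvex}). In the degenerate case where every $X(s^{(n)})$ is a singleton, the minimum defining $\gamma$ is over an empty set and the bound holds trivially.
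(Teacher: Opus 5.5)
Your proposal is correct and matches the paper's proof essentially step for step: the test set $\{e_i-e_j,\pm e_i\}$, the observation that the vectors $-e_i$ drop out because $\theta^*\ge 0$, sorting $\theta^*$ and taking the staircase $\theta^\dagger_i = 2(d-i+1)/(d(d+1))$, and then applying \Cref{lem:gamma_test_set_reduction}. Your additional checks, namely the lemma's hypothesis $Y^{(n)}\subseteq X(s^{(n)})$ and the all-singleton degenerate case, are sound details that the paper leaves implicit.
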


\begin{proof}
    By \Cref{prop:Mnatural-convex_test_set}, $\mathcal{T}=\{e_i-e_j,\,\pm e_i\}$ is a test set. On $\Theta=\Delta^{d-1}$ we have $\theta^*\ge0$, so $\theta^{*\top}(-e_i)=-\theta^*_i\le0$, i.e., $g=-e_i$ does not satisfy $\theta^{*\top}g>0$. Reorder coordinates so that $\theta^*_1\ge\cdots\ge\theta^*_d$, and set
    $
        \theta^\dagger_i:=\frac{2(d-i+1)}{d(d+1)}\quad(i=1,\ldots,d).
    $
    These are positive and decreasing, with $\sum_i\theta^\dagger_i=\tfrac{2}{d(d+1)}\sum_{i=1}^d(d-i+1)=1$, so $\theta^\dagger\in\Delta^{d-1}$. The vectors $g\in\mathcal{T}$ with $\theta^{*\top}g>0$ are of the following two types, each satisfying $\theta^{\dagger\top}g\ge\tfrac{2}{d(d+1)}$:
    \begin{itemize}
        \item $g=e_i-e_j$ (with $\theta^*_i>\theta^*_j$, hence $i<j$): $\theta^{\dagger\top}g=(j-i)\tfrac{2}{d(d+1)}\ge\tfrac{2}{d(d+1)}$;
        \item $g=+e_i$ (with $\theta^*_i>0$): $\theta^{\dagger\top}g=\theta^\dagger_i\ge\theta^\dagger_d=\tfrac{2}{d(d+1)}$.
    \end{itemize}
    Hence, by \Cref{lem:gamma_test_set_reduction}, $\gamma(\ell_\mathrm{sub})\ge\tfrac{1}{N}\cdot\tfrac{2}{d(d+1)}$.
\end{proof}

\subsection{\texorpdfstring{Linear inequality constraints and the Graver basis}{Linear inequality constraints and the Graver basis}}

The individual examples so far (M-convex and M${}^\natural$-convex) share the property that $X(s)$ can be represented by an integer linear inequality system over $\mathbb{Z}^d$ and that a test set can be taken to be a Graver basis. In this subsection, we give a lower bound determined solely by the $\ell_\infty$ norm of the Graver basis of the constraint matrix $A$. In particular, the bound is \textbf{independent of the right-hand side $b(s)$, the feature range $\|m\|_2$, and the factor $N^{d-1}$ in the sample size}. This is an essential improvement over the general ILP lower bounds of \Cref{theo:gamma_lowerbound_on_ball,theo:gamma_lowerbound_on_simplex}.

\begin{assumption}[Linear inequality constraints with identity features]
    \label{assu:linear-inequality}
    Let $d_{\mathcal{X}} = d$. For a constraint matrix $A \in \mathbb{Z}^{p \times d}$ and a right-hand side $b(s) \in \mathbb{Z}^p$, let
    \[
        X(s) = \{ x \in \mathbb{Z}^d \mid A x \leq b(s) \} .
    \]
    Let the feature map be the identity, i.e., $f_i(x, s) = x_i$ ($i = 1 , \ldots , d$).
\end{assumption}

\begin{definition}[Graver basis, {\citealp[cf.][]{onn2010nonlinear,sturmfels1996grobner}}]
    \label{def:graver_basis}
    For an integer matrix $B\in\mathbb{Z}^{q\times n}$, let its integer kernel be $\ker_{\mathbb{Z}}(B):=\{z\in\mathbb{Z}^n\mid Bz=0\}$. For vectors $u,v\in\mathbb{R}^n$, we say that $u$ is \emph{conformal} to $v$, written $u\sqsubseteq v$, if $u_i v_i\ge0$ (same sign) and $|u_i|\le|v_i|$ for every coordinate $i$. The \emph{Graver basis} $\mathcal{G}(B)$ of $B$ is the (finite) set of $\sqsubseteq$-minimal elements of $\ker_{\mathbb{Z}}(B)\setminus\{\mathbf{0}\}$.
\end{definition}

Introducing slack variables converts the system into an equality system $\widetilde{A}\widetilde{x}=b(s)$ (with $\widetilde{A}=[A\mid I_p]\in\mathbb{Z}^{p\times(d+p)}$, $\widetilde{x}=(x,y)$, $y\ge0$). We define the $\ell_\infty$ norm of the Graver basis $\mathcal{G}(\widetilde{A})$ (\Cref{def:graver_basis}) by
$
    g_\infty(\widetilde{A}):=\max\{\|g\|_\infty\mid g\in\mathcal{G}(\widetilde{A})\}\in\mathbb{Z}_{\ge1}.$

This is determined by $\widetilde{A}$ (and hence by $A$) alone, and does not depend on $b(s)$, $\|m\|_2$, or $N$.

\begin{proposition}[Test set via the Graver basis]
    \label{prop:linear_inequality_test_set}
    Under \Cref{assu:WIRL,assu:linear-inequality}, let $\pi_x\colon\mathbb{Z}^{d+p}\to\mathbb{Z}^d$ be the projection onto the $x$-coordinates and set $\mathcal{T}_x:=\pi_x(\mathcal{G}(\widetilde{A}))$. Then $\mathcal{T}_x$ is a test set of $X(s)$ for every $s\in\mathcal{S}$, and $\|g\|_\infty\le g_\infty(\widetilde{A})$ for all $g\in\mathcal{T}_x$.
\end{proposition}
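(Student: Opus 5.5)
The plan is the standard Graver-basis augmentation argument, run in the slack-lifted space $\mathbb{Z}^{d+p}$ and then projected to the $x$-coordinates.

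Fix $s$, a point $x^1\in X(s)$, and $\theta\in\Theta$ with $\theta^\top x^1<\max_{x\in X(s)}\theta^\top x$. Pick any maximizer $x^*\in X(s)$; it exists because $X(s)$ is finite by \Cref{assu:WIRL}. Lift both points by setting $y^1:=b(s)-Ax^1\ge0$ and $y^*:=b(s)-Ax^*\ge0$, and put $\widetilde{x}^1=(x^1,y^1)$, $\widetilde{x}^*=(x^*,y^*)$. Then $z:=\widetilde{x}^*-\widetilde{x}^1$ is a nonzero element of $\ker_{\mathbb{Z}}(\widetilde{A})$, since $\widetilde{A}z=b(s)-b(s)=0$ and $x^*\neq x^1$.

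The key input is the standard fact that every nonzero element of $\ker_{\mathbb{Z}}(\widetilde{A})$ is a \emph{conformal} sum of Graver basis elements \citep{onn2010nonlinear,sturmfels1996grobner}. This gives a decomposition $z=\sum_{k=1}^r g^k$ with $g^k\in\mathcal{G}(\widetilde{A})$ and $g^k\sqsubseteq z$ for every $k$. If the paper intends this fact to be proved rather than cited, I would derive it by induction on $\|z\|_1$. Among nonzero kernel elements conformal to $z$, pick a $\sqsubseteq$-minimal one $g$; it is a Graver element. Then $z-g$ lies in the kernel, is conformal to $z$, and has strictly smaller $\ell_1$ norm, so the induction applies to it.

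Now extend the linear objective to the lifted space by $\widetilde\theta:=(\theta,0)$. Summing over the decomposition gives $\sum_k\widetilde\theta^\top g^k=\theta^\top(x^*-x^1)>0$, so some index $k$ satisfies $\theta^\top\pi_x(g^k)>0$. Fix that $g:=g^k$. By conformality, each coordinate of $\widetilde{x}^1+g$ lies between the corresponding coordinates of $\widetilde{x}^1$ and $\widetilde{x}^*$. In particular its slack part is a coordinatewise convex combination of $y^1\ge0$ and $y^*\ge0$, hence nonnegative. Also $\widetilde{A}(\widetilde{x}^1+g)=b(s)$. It follows that $x^1+\pi_x(g)$ is integral and satisfies $A(x^1+\pi_x(g))=b(s)-(y^1+g_y)\le b(s)$, so $x^1+\pi_x(g)\in X(s)$ and $\theta^\top\pi_x(g)>0$. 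This is exactly the test-set property, with $\pi_x(g)\in\mathcal{T}_x$. Since $\mathcal{G}(\widetilde{A})$ depends only on $\widetilde{A}$ and not on $b(s)$, the same set $\mathcal{T}_x$ works for every $s$.

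The norm bound is immediate, because projecting onto a subset of coordinates cannot increase the $\ell_\infty$ norm: $\|\pi_x(g)\|_\infty\le\|g\|_\infty\le g_\infty(\widetilde{A})$.

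The only step with real content is the conformal decomposition, together with the observation that conformality keeps the slack variables nonnegative. The rest is bookkeeping.
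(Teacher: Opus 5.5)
Your proposal is correct and follows the paper's proof essentially step for step. You lift $x^1$ and a maximizer by slacks and conformally decompose the kernel difference in $\mathcal{G}(\widetilde{A})$. You then pick a summand with $(\theta,0)^\top g>0$, use conformality to keep the slacks nonnegative, and note that projection does not increase $\|\cdot\|_\infty$. The only addition is your induction on $\|z\|_1$ proving the conformal decomposition, which the paper simply cites. That argument is sound: by transitivity of $\sqsubseteq$, a $\sqsubseteq$-minimal element among the finitely many nonzero kernel vectors conformal to $z$ is a genuine Graver element.
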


\begin{proof}
    Take any $s\in\mathcal{S}$, $x^1\in X(s)$, and $\theta\in\Theta$ with $\theta^\top x^1 < \max_{x\in X(s)}\theta^\top x$.
    By \Cref{assu:WIRL}, $X(s)$ is bounded, hence finite as a subset of the integer lattice, so a maximizer $x^2 \in \argmax_{x \in X(s)}\theta^\top x$ exists.
    Lifting by slacks, set $\widetilde{x}^j := (x^j,\, b(s)-Ax^j) \in \mathbb{Z}^{d+p}$ for $j=1,2$; then $\widetilde{A}\widetilde{x}^j = b(s)$ and the last $p$ components (the slack components) of $\widetilde{x}^j$ are nonnegative.
    For the difference $\widetilde{z} := \widetilde{x}^2 - \widetilde{x}^1 \in \ker_{\mathbb{Z}}(\widetilde{A})\setminus\{0\}$, the conformal decomposition property of Graver bases (every $0\neq z\in\ker_{\mathbb{Z}}(B)$ decomposes as a sum $z=\sum_k g^k$ of elements of $\mathcal{G}(B)$ with $g^k\sqsubseteq z$; \citealp[cf.][]{sturmfels1996grobner,onn2010nonlinear}) yields
    \[
        \widetilde{z} = \sum_{k=1}^r \widetilde{g}^k,
        \qquad \widetilde{g}^k \in \mathcal{G}(\widetilde{A}),
        \quad \widetilde{g}^k \sqsubseteq \widetilde{z} .
    \]
    Each $\widetilde{g}^k$ is conformal to $\widetilde{z}$, i.e., componentwise it has the same sign as $\widetilde{z}$ and its absolute value does not exceed $|\widetilde{z}|$; hence, for every subset $K \subseteq \{1,\ldots,r\}$, each component of the partial sum $\widetilde{x}^1 + \sum_{k\in K}\widetilde{g}^k$ lies between the corresponding components of $\widetilde{x}^1$ and $\widetilde{x}^2$. In particular the slack components remain nonnegative and the equality system for $\widetilde{A}$ is preserved, so the first $d$ components of the partial sum belong to $X(s)$.
    Now set $\widetilde{\theta} := (\theta, 0) \in \mathbb{R}^{d+p}$; then $\widetilde{\theta}^\top\widetilde{z} = \theta^\top(x^2 - x^1) > 0$, so there is some $k_0$ with $\widetilde{\theta}^\top \widetilde{g}^{k_0} > 0$.
    Setting $g := \pi_x(\widetilde{g}^{k_0}) \in \mathcal{T}_x$ and considering the partial sum with $K=\{k_0\}$, we obtain $x^1 + g \in X(s)$ and $\theta^\top g = \widetilde{\theta}^\top \widetilde{g}^{k_0} > 0$.
    Therefore $\mathcal{T}_x$ is a test set of $X(s)$.
    Finally, since the projection does not increase the $\ell_\infty$ norm, $\|\pi_x(\widetilde{g})\|_\infty\le\|\widetilde{g}\|_\infty\le g_\infty(\widetilde{A})$ for every $\widetilde{g}\in\mathcal{G}(\widetilde{A})$.%
    
\end{proof}

\subsubsection{\texorpdfstring{The case where $\Theta$ is the unit ball}{The case of the unit ball}}

\begin{theorem}
    \label{theo:gamma_lowerbound_linear_inequality_ball}
    Suppose that \Cref{assu:WIRL,assu:WIRL-uniqueness,assu:linear-inequality} hold, and let $\Theta=B^d=\{\theta\in\mathbb{R}^d\mid\|\theta\|_2\le1\}$ with $d\ge2$. Set $C:=g_\infty(\widetilde{A})$, and assume that the convex hull of $S^+:=\{g\in\mathcal{T}_x\mid\theta^{*\top}g>0\}$ is full-dimensional ($\dim\Conv(S^+)=d$). Then
    \[
        \gamma(\ell_\mathrm{sub})
        \geq \frac{1}{N\,\sqrt{d-1}\,(2C\sqrt{d})^{d-1}}
        = \Omega\!\left(\frac{1}{N\,d^{(d+1)/2}\,(2C)^{d-1}}\right).
    \]
\end{theorem}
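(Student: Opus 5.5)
The plan is to reduce the statement to a purely geometric question about the finite set $S^+$ of integer vectors, and then bound that by a determinant (Hadamard) argument.

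\textbf{Reduction.} By \Cref{prop:linear_inequality_test_set}, the projected Graver basis $\mathcal{T}_x$ is a test set of every $X(s^{(n)})$. Each $X(s^{(n)})$ is a set of integer points, so the hypotheses of \Cref{lem:gamma_test_set_reduction} hold. Hence for every $\theta^\dagger\in B^d$,
\[
\gamma(\ell_\mathrm{sub})\ \ge\ \frac1N\min_{g\in S^+}\theta^{\dagger\top}g .
\]
It therefore suffices to show
\[
\mu^\star:=\max_{\|\theta\|_2\le1}\min_{g\in S^+}\theta^\top g\ \ge\ \frac{1}{\sqrt{d-1}\,(2C\sqrt d)^{d-1}} .
\]
By \Cref{prop:Minimax_theorem_finite} with $\Theta=B^d$, $\mu^\star=\min_{u\in\Conv(S^+)}\|u\|_2$, the Euclidean distance from $0$ to the polytope $P:=\Conv(S^+)$. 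Since $\theta^{*\top}g>0$ for every $g\in S^+$, we have $0\notin P$, so $\mu^\star>0$.

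\textbf{Geometry of $P$.} Let $p\neq0$ be the nearest point of $P$ to the origin and $F$ the minimal face of $P$ containing it. Then $-p$ lies in the normal cone of $P$ at $F$. Because $P$ is full-dimensional by assumption, this cone is generated by the outward normals of the facets of $P$ containing $F$. Each facet is spanned by $d$ affinely independent points $g^1,\dots,g^d\in S^+$, which are integer vectors with $\|g^i\|_\infty\le C$ by \Cref{prop:linear_inequality_test_set}. An integral normal of the facet hyperplane is $w$, the generalized cross product of the $d-1$ differences $g^i-g^1$; its entries are $(d-1)\times(d-1)$ minors. Since $\|g^i-g^1\|_2\le 2C\sqrt d$, Hadamard's inequality gives
\[
\|w\|_2\le\sqrt{d}\,\max_k|\text{minor}_k|\quad\text{or directly}\quad \|w\|_2\le(2C\sqrt d)^{d-1}.
\]
Moreover $w^\top g^1$ is an integer. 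For a facet hyperplane not passing through $0$ this integer is nonzero, so the distance from $0$ to that hyperplane is $|w^\top g^1|/\|w\|_2\ge (2C\sqrt d)^{-(d-1)}$.

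\textbf{Main obstacle.} The difficulty is passing from facet hyperplanes to $\|p\|_2$. Facets containing $F$ may pass through the origin, and $p$ is a conic combination of several facet normals rather than a single one. My first attempt is a Carathéodory argument: write $p$ as a combination of at most $d-1$ facet normals $w_j$ whose hyperplanes contain $F$, with $p^\top x=\|p\|_2^2$ on $F$. Then bound
\[
\|p\|_2\ \ge\ \min_j\frac{\mathrm{dist}(0,H_j)}{\sqrt{d-1}} ,
\]
where $H_j$ is the $j$-th facet hyperplane. This inequality would come from comparing $\|p\|_2^2=\sum_j\lambda_j(w_j^\top g)$, taken over facets whose hyperplane separates $0$ from $P$, with $\|\sum_j\lambda_jw_j\|_2\le\sqrt{d-1}\,\max_j\|\lambda_jw_j\|_2$. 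This is the source I expect for the extra $\sqrt{d-1}$ factor, matching the analogous step in the proof of \Cref{theo:gamma_lowerbound_on_ball}. If that comparison fails, the fallback is to work directly with $\mathrm{aff}(F)$. Choose affinely independent vertices of $F$ and an integer basis of the orthogonal complement of their differences. Then $p$ is $\|p\|_2^2$ times a rational vector with denominator a Gram determinant, which is bounded by Hadamard in terms of $(2C\sqrt d)^{2(d-1)}$, together with one $\sqrt{d-1}$ factor from the number of generators.

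Combining the resulting bound on $\mu^\star$ with the reduction yields
\[
\gamma(\ell_\mathrm{sub})\ \ge\ \frac{1}{N\,\sqrt{d-1}\,(2C\sqrt d)^{d-1}} .
\]
The $\Omega$ form follows from $\sqrt{d-1}\,d^{(d-1)/2}\le d^{d/2}$, up to the stated $d^{(d+1)/2}$.
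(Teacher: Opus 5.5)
Your reduction (test-set lemma plus minimax, giving $\gamma(\ell_{\mathrm{sub}})\ge\frac1N\mathrm{dist}(0,\Conv(S^+))$) matches the paper. Your facet ingredients (integral normal $w$, and $|w^\top g^1|\ge 1$ for a facet hyperplane missing $0$) are also right. But the step you call the main obstacle is left open, and your first attempt at it fails. The inequality $\|\sum_j\lambda_jw_j\|_2\le\sqrt{d-1}\max_j\|\lambda_jw_j\|_2$ is false for nearly parallel facet normals; the triangle inequality only gives a factor equal to the number of terms. Moreover, in the identity $\|p\|_2^2=-\sum_j\lambda_jc_j$ (outward normals $w_j$, right-hand sides $c_j$), facets through or behind the origin enter with the wrong sign, so you cannot just restrict the sum to separating facets. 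The fallback is only sketched.

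The missing idea is that you do not need the nearest point or its minimal face at all. Since $P=\Conv(S^+)$ is full-dimensional, it is the intersection of its facet half-spaces $\{w^\top x\le c\}$. So $0\notin P$ forces $0$ to violate one of them, i.e.\ $c<0$, hence $c\le-1$ by integrality. For that single facet, Cauchy--Schwarz gives $\|q\|_2\ge -w^\top q/\|w\|_2\ge -c/\|w\|_2\ge 1/\|w\|_2$ for every $q\in P$, whether or not the facet contains the nearest point. This is exactly the paper's argument: \Cref{prop:str_separating_hyperplane_theorem_for_polyhedra} applied with the lattice point $0$, then Step~3.

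Note also that the $\sqrt{d-1}$ in the statement does not come from combining several normals. In the paper it comes from the coordinatewise cofactor estimate $\|a\|\le\sqrt{d-1}\,\|m\|_2^{d-1}$ of \Cref{prop:norm_a_bound}, with $m=(2C,\ldots,2C)$. Your ``direct'' bound $\|w\|_2\le(2C\sqrt d)^{d-1}$ is correct and sharper: by Cauchy--Binet $\|w\|_2^2=\det(VV^\top)$, and Hadamard's inequality for this Gram matrix gives $\det(VV^\top)\le\prod_k\|v_k\|_2^2$. So with the single-facet argument you get $\gamma(\ell_{\mathrm{sub}})\ge 1/(N(2C\sqrt d)^{d-1})$, which implies the stated bound.

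Your fallback can also be made rigorous. The point $p$ is the projection of $0$ onto $\mathrm{aff}(F)=y^0+\mathrm{span}(u_1,\ldots,u_k)$, with $u_i=y^i-y^0$ and $k\le d-1$. Hence $\|p\|_2^2$ is the ratio of the Gram determinants of $(y^0,u_1,\ldots,u_k)$ and $(u_1,\ldots,u_k)$. The numerator is a positive integer, which gives $\|p\|_2\ge\prod_i\|u_i\|_2^{-1}\ge(2C\sqrt d)^{-(d-1)}$, and this route does not even need full-dimensionality. As written, however, neither route is carried out, so the proof is incomplete at its key step.
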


\begin{proof}
    By \Cref{prop:linear_inequality_test_set}, $\mathcal{T}_x$ is a test set with $\|g\|_\infty\le C$ for all $g\in\mathcal{T}_x$. Set $S^+:=\{g\in\mathcal{T}_x\mid\theta^{*\top}g>0\}\subset\mathbb{Z}^d$.

    \emph{Step 1 (reduction to a distance via minimax).}
    \Cref{lem:gamma_test_set_reduction} gives $\gamma(\ell_\mathrm{sub})\ge\frac1N\min_{g\in S^+}\theta^{\dagger\top}g$ for any $\theta^\dagger\in\Theta$. Maximizing the right-hand side over $\theta^\dagger\in B^d$ and applying \Cref{prop:Minimax_theorem_finite} ($B^d$ is compact convex and $S^+$ is finite), we obtain
    \begin{align}        
        \gamma(\ell_\mathrm{sub})
        & \ge \frac1N\max_{\theta^\dagger\in B^d}\min_{g\in S^+}\theta^{\dagger\top}g
        = \frac1N\min_{q\in\Conv(S^+)}\max_{\|\theta^\dagger\|_2\le1}\theta^{\dagger\top}q
        \notag \\
        & = \frac1N\min_{q\in\Conv(S^+)}\|q\|_2
        = \frac1N\,\mathrm{dist}\bigl(0,\Conv(S^+)\bigr).
        \notag 
    \end{align}
    Since $\theta^{*\top}g>0$ for each $g\in S^+$, we have $\theta^{*\top}q>0$ for every $q\in\Conv(S^+)$; in particular $0\notin\Conv(S^+)$.

    \emph{Step 2 (an integral separating hyperplane).}
    $\Conv(S^+)$ is a lattice polytope whose vertices lie in $S^+\subset\{g\in\mathbb{Z}^d\mid\|g\|_\infty\le C\}$, and by assumption it is full-dimensional with $0\notin\Conv(S^+)$. Applying \Cref{prop:str_separating_hyperplane_theorem_for_polyhedra} with $A=\Conv(S^+)$ and the lattice point $P=0$ yields affinely independent vertices $y^1,\ldots,y^d\in S^+$ and an integral normal $a\in\mathbb{Z}^d$, $c\in\mathbb{Z}$ such that $\Conv(S^+)\subseteq\{x\mid a\cdot x\le c\}$ and $a\cdot0>c$ (i.e., $c\le-1$). Here $a$ is given by the cofactors of the difference vectors $v_k:=y^{k+1}-y^1$ ($k=1,\ldots,d-1$; since both endpoints satisfy $\|\cdot\|_\infty\le C$, we have $|(v_k)_j|\le2C$), and $c=a\cdot y^1$.

    \emph{Step 3 (estimate).}
    For every $q\in\Conv(S^+)\subseteq\{x\mid a\cdot x\le c\}$, the Cauchy--Schwarz inequality gives
    \[
        \|q\|_2=\|0-q\|_2\ge\frac{a\cdot(0-q)}{\|a\|}=\frac{-a\cdot q}{\|a\|}\ge\frac{-c}{\|a\|}\ge\frac{1}{\|a\|}
    \]
    (using $-c\ge1$). Hence $\mathrm{dist}(0,\Conv(S^+))\ge1/\|a\|$. Since each component of the difference vectors has width $2C$, applying \Cref{prop:norm_a_bound} with $m=(2C,\ldots,2C)$ ($\|m\|_2=2C\sqrt d$) gives $\|a\|\le\sqrt{d-1}\,(2C\sqrt d)^{d-1}$. Combining these,
    \[
        \gamma(\ell_\mathrm{sub})\ge\frac1N\cdot\frac1{\|a\|}\ge\frac{1}{N\,\sqrt{d-1}\,(2C\sqrt d)^{d-1}}.
    \]
    
\end{proof}

\subsubsection{\texorpdfstring{The case where $\Theta$ is the probability simplex}{The case of the probability simplex}}

\begin{theorem}
    \label{theo:gamma_lowerbound_linear_inequality_simplex}
    Suppose that \Cref{assu:WIRL,assu:WIRL-uniqueness,assu:linear-inequality} hold, and let $\Theta=\Delta^{d-1}$ with $d\ge2$. Set $C:=g_\infty(\widetilde{A})$. In addition, assume that $Y^{(n_0)}$ is not a singleton for some $n_0 \in \{1, \ldots, N\}$ (nondegeneracy). Then
    \[
        \gamma(\ell_\mathrm{sub})
        \geq \frac{1}{N\,\max(d-1,\sqrt2)\,(2C\sqrt d)^{d-1}}
        = \Omega\!\left(\frac{1}{N\,d^{(d+1)/2}\,(2C)^{d-1}}\right).
    \]
\end{theorem}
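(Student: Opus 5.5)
The plan is to combine the test-set reduction of \Cref{lem:gamma_test_set_reduction} with the Graver-basis test set of \Cref{prop:linear_inequality_test_set}, as in the unit-ball case (\Cref{theo:gamma_lowerbound_linear_inequality_ball}). The simplex geometry will be handled the same way as in the proof of \Cref{theo:gamma_lowerbound_on_simplex}: pass to the downward closure so that no full-dimensionality assumption is needed.

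\emph{Step 1 (reduction).} Set $C:=g_\infty(\widetilde A)$ and $S^+:=\{g\in\mathcal{T}_x\mid\theta^{*\top}g>0\}\subset\{g\in\mathbb{Z}^d\mid\|g\|_\infty\le C\}$. The set $Y^{(n)}\subseteq X(s^{(n)})$ because vertices of $\Conv$ of a finite integer set lie in the set, so \Cref{lem:gamma_test_set_reduction} applies. Maximizing over $\theta^\dagger\in\Delta^{d-1}$ and using \Cref{prop:Minimax_theorem_finite} gives
\[
\gamma(\ell_{\mathrm{sub}})\ \ge\ \frac1N\max_{\theta\in\Delta^{d-1}}\min_{g\in S^+}\theta^\top g=\frac1N\min_{q\in\Conv(S^+)}\max_i q_i .
\]
Nondegeneracy ensures that some non-optimal point exists. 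Decomposing it toward the optimum as in the proof of \Cref{lem:gamma_test_set_reduction} produces an element of $S^+$, so $S^+\neq\emptyset$. If $\max_i q_i\le 0$ for some $q\in\Conv(S^+)$, then $\theta^{*\top}q\le 0$ because $\theta^*\ge0$, which contradicts $\theta^{*\top}q>0$. Hence the quantity $\mu:=\min_{q}\max_i q_i$ is positive, and $0\notin Q:=\Conv(S^+)-\mathbb{R}^d_{\ge0}$.

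\emph{Step 2 (integral separating hyperplane).} The set $Q$ is a full-dimensional polyhedron. Its vertices lie in $S^+$ and its recession directions are $-e_i$. I would apply \Cref{prop:str_separating_hyperplane_theorem_for_polyhedra}, which the paper already uses in the simplex proof of \Cref{theo:gamma_lowerbound_on_simplex}, to $Q$ and the lattice point $0$. This should give an integral normal $a\in\mathbb{Z}^d$ and $c\le-1$ with $Q\subseteq\{a\cdot x\le c\}$. The normal $a$ is built from cofactors of $d-1$ independent generators of a facet. Since $Q$ contains $-\mathbb{R}^d_{\ge0}$ directions, $a\ge 0$. The generators are differences of two vertices of $S^+$, with entries bounded by $2C$, or unit vectors $-e_i$, with entries bounded by $1\le 2C$. \Cref{prop:norm_a_bound} with $m=(2C,\dots,2C)$, so that $\|m\|_2=2C\sqrt d$, should then bound the relevant norm of $a$ by $\max(d-1,\sqrt2)(2C\sqrt d)^{d-1}$. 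This matches the factor in \Cref{theo:gamma_lowerbound_on_simplex}, where $\|a\|_1$ rather than $\|a\|_2$ is the natural quantity.

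\emph{Step 3 (estimate).} For $q\in\Conv(S^+)$ we have $a\ge0$ and $a\cdot q\le c\le-1$, so
\[
-1\ \ge\ a\cdot q\ \ge\ -\|a\|_1\max_i(-q_i)\quad\text{and so}\quad \max_i(-q_i)\ge 1/\|a\|_1 .
\]
This is the wrong sign, so I would redo the separation with care. The correct step is to separate $0$ from the \emph{upward} closure $\Conv(S^+)+\mathbb{R}^d_{\ge0}$, or equivalently to write the minimax as a distance in the $\ell_\infty$-type gauge. For $q$ in the closure we then get $a\cdot q\ge c\ge1$ with $a\ge0$. This gives $\max_i q_i\cdot\|a\|_1\ge a\cdot q\ge 1$, hence $\mu\ge1/\|a\|_1$, and therefore $\gamma\ge 1/(N\|a\|_1)$. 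Inserting the cofactor bound finishes the proof.

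The main obstacle is Step 2: checking that the separating-hyperplane proposition yields a facet normal whose defining vectors all have entries bounded by $2C$, including the recession directions, and that the norm bound in \Cref{prop:norm_a_bound} is stated in the form ($\ell_1$ with the $\max(d-1,\sqrt2)$ factor) used for the simplex. I would follow the proof of \Cref{theo:gamma_lowerbound_on_simplex} line by line, replacing the vertex sets $W$ (ranges $N m$) by $S^+$ (ranges $2C$).
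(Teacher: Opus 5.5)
Once you adopt the correction you make in Step~3, your argument is essentially the paper's proof: separate $0$ from the upward closure $\Conv(S^+)+\mathbb{R}^d_{\ge0}$ by a facet inequality $a\cdot x\ge c$ with $a\in\mathbb{Z}^d_{\ge0}$ and $c\ge1$, conclude $\max_i q_i\ge 1/\sum_j a_j$, and bound $\sum_j a_j$ by $\max(d-1,\sqrt2)(2C\sqrt d)^{d-1}$. For a clean write-up, delete the false claim $0\notin\Conv(S^+)-\mathbb{R}^d_{\ge0}$ from Step~1 (the correct statement is $0\notin\Conv(S^+)+\mathbb{R}^d_{\ge0}$); argue directly with a facet of this unbounded but pointed polyhedron, spanned by a vertex in $S^+$, vertex differences with entries bounded by $2C$ in absolute value, and rays $e_i$, because \Cref{prop:str_separating_hyperplane_theorem_for_polyhedra} is stated only for bounded polytopes; and obtain the $\ell_1$ factor from the coordinatewise Hadamard bound together with \Cref{prop:max_1-x_on_simplex}, not from the $\ell_2$ bound of \Cref{prop:norm_a_bound}, which via $\|a\|_1\le\sqrt d\|a\|_2$ would only give $\sqrt{d(d-1)}$ in place of $\max(d-1,\sqrt2)$.
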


\begin{proof}
    By \Cref{prop:linear_inequality_test_set}, $\mathcal{T}_x$ is a test set with $\|g\|_\infty\le C$. Set $S^+:=\{g\in\mathcal{T}_x\mid\theta^{*\top}g>0\}\subset\mathbb{Z}^d$.
    By the nondegeneracy assumption we can take $a^{n} \in Y^{(n_0)} \setminus \{a^{(n_0)}\}$ (note that $Y^{(n_0)}$ is the vertex set of $\Conv(X(s^{(n_0)}))$ since $f$ is the identity map, and that $Y^{(n_0)} \subseteq X(s^{(n_0)})$). By \Cref{assu:WIRL-uniqueness}, $a^{(n_0)}$ is the unique maximizer of $\theta^*$ over $X(s^{(n_0)})$; hence, applying \Cref{prop:discrete_gradient_descent} with $X = X(s^{(n_0)})$, $\theta = \theta^*$, and $x^1 = a^{n}$, the resulting decomposition is a nonempty sum because $a^{n} \neq a^{(n_0)}$, and so there exists $g \in \mathcal{T}_x$ with $\theta^{*\top}g > 0$; that is, $S^+ \neq \emptyset$.

    \emph{Step 1 (reduction via minimax).}
    Maximizing \Cref{lem:gamma_test_set_reduction} (which holds for any $\theta^\dagger\in\Theta$) over $\theta^\dagger\in\Delta^{d-1}$ and applying \Cref{prop:Minimax_theorem_finite} ($\Delta^{d-1}$ is compact convex, $S^+$ finite), since $\max_{\theta^\dagger\in\Delta^{d-1}}\theta^{\dagger\top}q=\max_i q_i$ we obtain
    \[
        \gamma(\ell_\mathrm{sub})
        \ge\frac1N\max_{\theta^\dagger\in\Delta^{d-1}}\min_{g\in S^+}\theta^{\dagger\top}g
        =\frac1N\min_{q\in\Conv(S^+)}\max_{i=1,\ldots,d}q_i .
    \]
    From $\theta^*\in\Delta^{d-1}$ ($\theta^*\ge0$) and $\theta^{*\top}q>0$, we have $\max_i q_i>0$ for each $q\in\Conv(S^+)$, that is, $\Conv(S^+)\cap(-\mathbb{R}_{\ge0}^d)=\emptyset$.

    \emph{Step 2 (an integral separating hyperplane with a nonnegative normal).}
    Consider the upward closure $Q':=\Conv(S^+)+\mathbb{R}_{\ge0}^d$. Being the Minkowski sum of a bounded convex polytope and a convex cone, $Q'$ is a polyhedron (an intersection of finitely many closed half-spaces), and since it contains the full $d$-dimensional set $q_0+\mathbb{R}_{\ge0}^d$ for any $q_0\in\Conv(S^+)$, $Q'$ is full $d$-dimensional. Moreover $Q'$ is \emph{upward closed}, that is, $x\in Q'$ and $x'\ge x$ (componentwise) imply $x'\in Q'$. Hence each facet inequality defining $Q'$ can be taken in the form $a\cdot x\ge c$ with $a\ge0$: indeed, writing a facet inequality as $a\cdot x\ge c$ and supposing $a_i<0$ for some $i$, upward closedness gives $x+te_i\in Q'$ for all $t\ge0$ and $x\in Q'$, so $a\cdot x+t a_i\ge c$ would have to hold for all $t\ge0$, while the left-hand side tends to $-\infty$ as $t\to\infty$, a contradiction (this mirrors the argument for the downward closure in Step~3 of the proof of \Cref{theo:gamma_lowerbound_on_simplex}). As $\Conv(S^+)\cap(-\mathbb{R}_{\ge0}^d)=\emptyset$ is equivalent to $0\notin Q'$, the point $0$ violates some facet inequality: there exist an integral normal $a\in\mathbb{Z}_{\ge0}^d\setminus\{0\}$ and $c\in\mathbb{Z}$ with $\Conv(S^+)\subseteq Q'\subseteq\{x\mid a\cdot x\ge c\}$ and $a\cdot0<c$ (i.e., $c\ge1$). This facet is spanned by vertices of $S^+$ (lattice points with $\|\cdot\|_\infty\le C$) and recession directions $e_i$ (where $a_i=0$); $a$ can be constructed from the cofactors of affinely independent lattice points $y^1,\ldots,y^d$ (choosing the sign so that $a\ge0$), and the difference vectors $v_k=y^{k+1}-y^1$ satisfy $|(v_k)_j|\le 2C$.

    \emph{Step 3 (estimate).}
    For every $q\in\Conv(S^+)$, since $a\ge0$ the weights $\lambda_i:=a_i/\sum_j a_j$ form a convex combination, so
    \[
        \max_{i}q_i\ge\sum_i\lambda_i q_i=\frac{a\cdot q}{\sum_j a_j}\ge\frac{c}{\sum_j a_j}\ge\frac{1}{\sum_j a_j} .
    \]
    Applying the same Hadamard estimate as in the proof of \Cref{theo:gamma_lowerbound_on_simplex} (\Cref{prop:max_1-x_on_simplex}) with $m=(2C,\ldots,2C)$ gives $\sum_j a_j\le\max(d-1,\sqrt2)(2C\sqrt d)^{d-1}$. Therefore,
    \[
        \gamma(\ell_\mathrm{sub})\ge\frac1N\cdot\frac{1}{\sum_j a_j}\ge\frac{1}{N\,\max(d-1,\sqrt2)(2C\sqrt d)^{d-1}}.
    \]
    
\end{proof}

\begin{remark}
    \label{rem:totally_unimodular_gamma}
    If $A$ is \emph{totally unimodular} (i.e., every square submatrix of $A$ has determinant in $\{0,\pm1\}$), then $C=1$ \citep[cf.][Lemmas 3.18, 3.19]{onn2010nonlinear}. For $\Theta=\Delta^{d-1}$, by \Cref{theo:gamma_lowerbound_linear_inequality_simplex}, the lower bound on $\gamma(\ell_\mathrm{sub})$ is $\Omega\bigl(1/(N\,d^{(d+1)/2}\,2^{d-1})\bigr)$.
\end{remark}

\begin{remark}
    \label{rem:linear_inequality_includes_Mnat}
    An M${}^\natural$-convex set (\Cref{assu:Mnatural-convex-projection}) is the set of integer points of a generalized integral base polyhedron, which is described by lower and upper bound constraints over a family of subsets (submodular and supermodular inequalities). We caution that the corresponding coefficient matrix (whose rows are indicator vectors of subsets) is in general \emph{not} totally unimodular: for instance, the rows $(1,1,0)$, $(0,1,1)$, $(1,0,1)$ form a square submatrix of determinant $\pm2$ (total unimodularity holds only for special families of subsets such as laminar families). Hence the lower bound for the M${}^\natural$-convex case cannot, in general, be derived as a corollary of \Cref{rem:totally_unimodular_gamma} ($C=1$). On the other hand, \Cref{theo:gamma_lowerbound_on_ball_Mnaturalconvex,theo:gamma_lowerbound_on_simplex_Mnaturalconvex}, which exploit the explicit test set $\{e_i-e_j,\pm e_i\}$, hold regardless of the structure of the coefficient matrix of the describing inequality system, and give much sharper lower bounds, namely $\Omega(1/(N d^{3/2}))$ and $\Omega(1/(N d^{2}))$.%
    
\end{remark}

\begin{proposition}[An explicit upper bound on $g_\infty(\widetilde{A})$, {\citealp[cf.][Section 3.4]{onn2010nonlinear}}]
    \label{prop:ginfty_explicit_bound}
    For the constraint matrix $A\in\mathbb{Z}^{p\times d}$ of \Cref{assu:linear-inequality} with $A\neq0$, let $\widetilde{A}=[A\mid I_p]$. Let $\Delta(A)$ be the largest absolute value of the square subdeterminants of $A$. Then
    $
        C=g_\infty(\widetilde{A})\ \le\ d\,\Delta(A).
    $
\end{proposition}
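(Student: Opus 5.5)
Our plan is to bound every Graver basis element $\widetilde g=(x,y)\in\mathcal{G}(\widetilde A)$ coordinatewise through its support structure. The standard fact we use is that Graver elements of an integer matrix are built from circuits: every $\widetilde g\in\mathcal{G}(\widetilde A)$ is a conformal sum $\widetilde g=\sum_k \lambda_k c^k$ with $0\le\lambda_k<1$ (a non-integral conic combination), where the $c^k$ are circuits of $\widetilde A$ conformal to $\widetilde g$ and the number of terms is at most $\dim\ker(\widetilde A)=d$, the number of columns minus the rank $p$. We take this from \citet[Section 3.4]{onn2010nonlinear}. The Carathéodory-type reduction to at most $d$ circuits with coefficients in $[0,1)$ is the part we would cite rather than reprove. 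It follows by subtracting integer parts: if some $\lambda_k\ge1$, then $\widetilde g-c^k$ is a nonzero kernel element conformal to $\widetilde g$, which contradicts minimality. Strictness gives $\|\widetilde g\|_\infty<d\max_k\|c^k\|_\infty$. Because all quantities are integers, this yields $\|\widetilde g\|_\infty\le d\max_k\|c^k\|_\infty$.

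The main step is to bound $\|c\|_\infty\le\Delta(A)$ for each circuit $c$ of $\widetilde A=[A\mid I_p]$. A circuit is a support-minimal integral kernel vector, normalized to be primitive. By Cramer's rule, its entries are, up to sign and a common divisor, the maximal minors of the submatrix of $\widetilde A$ formed by the columns of the support (after restricting to a row basis). Every square submatrix of $[A\mid I_p]$ has determinant equal, up to sign, to a square subdeterminant of $A$, or to $1$ when only identity columns remain. To see this, we expand along the identity columns: each contributes a single $1$ and deletes a row. Hence every entry of the circuit has absolute value at most $\max(\Delta(A),1)=\Delta(A)$, using $A\neq 0$ and integrality, so that $\Delta(A)\ge1$. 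Dividing by the gcd only decreases the entries.

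Combining the two steps gives $C=g_\infty(\widetilde A)\le d\,\Delta(A)$. We expect the main obstacle to be the bookkeeping in the first step. We must check that the number of circuits needed is $d$ and not $d+p$: the relevant bound is $\dim\ker\widetilde A=(d+p)-\operatorname{rank}\widetilde A=d$, since $\operatorname{rank}\widetilde A=p$ thanks to $I_p$. We must also handle the strictness $\lambda_k<1$ carefully, so that no extra additive constant appears.
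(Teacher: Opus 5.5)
Your proposal is correct and follows the same route as the paper. The paper applies Onn's Lemma 3.20 to $\widetilde{A}$ to get $\|g\|_\infty\le (n-r)\,\Delta(\widetilde{A})=d\,\Delta(\widetilde{A})$, then uses $\Delta(\widetilde{A})=\max(\Delta(A),1)=\Delta(A)$; you unpack that lemma via the conformal circuit decomposition and Cramer's rule. One imprecision: when $\widetilde g$ is itself a circuit, the decomposition is $\widetilde g=1\cdot c^k$ with $\widetilde g-c^k=0$, so minimality yields no contradiction, but then $\|\widetilde g\|_\infty\le\Delta(A)$ holds directly.
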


\begin{proof}
    The matrix $\widetilde{A}=[A\mid I_p]$ has $n=d+p$ columns and $p$ rows, and contains the submatrix $I_p$, so its rank is $r=p$. Applying \citet[Lemma 3.20]{onn2010nonlinear} to $\widetilde{A}$, every $g\in\mathcal{G}(\widetilde{A})$ satisfies $\|g\|_\infty\le(n-r)\Delta(\widetilde{A})=d\,\Delta(\widetilde{A})$. Adjoining unit columns gives $\Delta(\widetilde{A})=\max(\Delta(A),1)$, and since $A\neq0$ we have $\Delta(A)\ge1$, so $\Delta(\widetilde{A})=\Delta(A)$.
\end{proof}

\subsection{\texorpdfstring{A separating hyperplane theorem}{A separating hyperplane theorem}}

\begin{definition}
    For a vector $\mathbf{b} \in \mathbb{R}^d$ ($\mathbf{b} \neq \mathbf{0}$) and a scalar $c \in \mathbb{R}$,
    $
    H = \{\mathbf{x} \in \mathbb{R}^d \mid \mathbf{b}^T \mathbf{x} \leq c\}
    $
    is called a \textbf{closed half-space}. Its \textbf{boundary hyperplane} is $\partial H = \{\mathbf{x} \in \mathbb{R}^d \mid \mathbf{b}^T \mathbf{x} = c\}$.
\end{definition}

\begin{definition}
\label{def:face_facet}
The \emph{affine hull} $\mathrm{aff}(S)$ of a set $S \subseteq \mathbb{R}^d$ is the smallest affine subspace containing $S$ (equivalently, the intersection of all affine subspaces containing $S$); for a finite list of points $y^1, \ldots, y^k \in \mathbb{R}^d$ we write $\mathrm{aff}(y^1, \ldots, y^k) := \mathrm{aff}(\{y^1, \ldots, y^k\})$.
Consider a polyhedron $A$ in $\mathbb{R}^d$ (a convex set expressed as the intersection of finitely many closed half-spaces) together with an inequality $\mathbf{b}^\top \mathbf{x} \le c$ that holds on $A$ (that is, $A \subseteq \{\mathbf{x} \mid \mathbf{b}^\top \mathbf{x} \le c\}$ with $\mathbf{b} \neq \mathbf{0}$). Then the set $F := A \cap \{\mathbf{x} \mid \mathbf{b}^\top \mathbf{x} = c\}$ is called a \emph{face} of $A$. The dimension of the affine hull $\mathrm{aff}(F)$ of a face $F$ is called the dimension of that face, and a face whose dimension equals $\dim A - 1$ is called a \emph{facet} of $A$. Moreover, when a face $F$ is a facet of $A$, the inequality $\mathbf{b}^\top \mathbf{x} \le c$ that gives it (that is, an inequality holding on $A$ such that $F = A \cap \{\mathbf{x} \mid \mathbf{b}^\top \mathbf{x} = c\}$) is called a \emph{facet-defining inequality} of $F$.
\end{definition}

\begin{proposition}
\label{prop:str_separating_hyperplane_theorem_for_polyhedra}
In Euclidean space $\mathbb{R}^d$, let $A$ be a bounded convex polytope all of whose vertices are lattice points (points of $\mathbb{Z}^d$) and that satisfies $\dim A = d$ (full-dimensional), and let $P \in \mathbb{Z}^d$ be a lattice point not belonging to $A$ ($P \notin A$). Then there exist a closed half-space $H = \{\mathbf{x} \in \mathbb{R}^d \mid \mathbf{b}^\top \mathbf{x} \leq c\}$ and affinely independent vertices $y^1, \ldots, y^{d}$ of $A$ (which, being vertices of $A$, are lattice points) such that: $A \subset H$; $P \notin H$ (i.e., $\mathbf{b}^\top P > c$); and $\partial H = \mathrm{aff}(y^1, \ldots, y^{d})$.
\end{proposition}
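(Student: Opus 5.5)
The plan is to use the facet description of the full-dimensional polytope $A$ and to pick a facet inequality that $P$ violates. Since $A$ is a bounded convex polytope with $\dim A = d$, the Minkowski--Weyl theorem gives an irredundant description
\[
A=\bigcap_{k=1}^{K}\{\mathbf{x}\mid \mathbf{b}_k^\top\mathbf{x}\le c_k\}.
\]
Every inequality in this description is facet-defining. Because $P\notin A$, at least one inequality fails at $P$. So there is an index $k$ with $\mathbf{b}_k^\top P>c_k$. Set $\mathbf{b}:=\mathbf{b}_k$, $c:=c_k$ and $H:=\{\mathbf{x}\mid \mathbf{b}^\top\mathbf{x}\le c\}$. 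Then $A\subset H$ and $P\notin H$ hold immediately.

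Next I identify the boundary hyperplane. Let $F:=A\cap\partial H$ be the facet determined by $H$. By \Cref{def:face_facet} it satisfies $\dim\mathrm{aff}(F)=d-1$. The face $F$ is itself a polytope, equal to the convex hull of those vertices of $A$ that lie on $\partial H$. The affine hull of a convex hull equals the affine hull of the generating points. Therefore the vertices of $A$ lying in $F$ affinely span a $(d-1)$-dimensional affine space, and from them I can extract $d$ affinely independent vertices $y^1,\ldots,y^d$. These are lattice points by hypothesis.

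Both $\mathrm{aff}(y^1,\ldots,y^d)$ and $\partial H$ are $(d-1)$-dimensional affine subspaces, and the first is contained in the second. Hence they coincide, giving $\partial H=\mathrm{aff}(y^1,\ldots,y^d)$.

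For later use, as in \Cref{theo:gamma_lowerbound_linear_inequality_ball}, I would also record that $\mathbf{b}$ can be chosen integral. Take $\mathbf{b}$ to be the vector of signed $(d-1)\times(d-1)$ cofactors of the matrix with rows $v_k=y^{k+1}-y^1$, i.e. the generalized cross product, and set $c=\mathbf{b}^\top y^1$. This vector is nonzero because the $v_k$ are linearly independent, and it is orthogonal to every $v_k$. It is therefore parallel to the original normal, so after fixing the sign the half-space is unchanged. Integrality of $\mathbf{b}$ and $c$ then follows from $y^j\in\mathbb{Z}^d$. Finally, $\mathbf{b}^\top P>c$ with both sides integers gives $\mathbf{b}^\top P\ge c+1$.

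The only step needing care is the claim that a violated inequality can be chosen facet-defining, together with the face-vertex relation. Both are standard polytope theory (an irredundant H-representation of a full-dimensional polytope consists of facet inequalities, and faces of a polytope are convex hulls of the vertices they contain). I would cite these facts rather than reprove them.
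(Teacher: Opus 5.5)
Your proof is correct and follows the paper's argument essentially step for step: take an irredundant facet description of $A$, pick a facet inequality violated by $P$, and extract $d$ affinely independent vertices of that facet, which span its affine hull $\partial H$. Your added justifications fill in steps the paper only asserts: a face is the convex hull of the vertices of $A$ it contains, and the two $(d-1)$-dimensional affine spaces coincide by containment. The cofactor-integrality remark is not needed for this proposition, but it matches how the paper later uses it.
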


\begin{proof}
Since $A$ is a $d$-dimensional bounded convex polytope, it admits an irredundant facet representation
$A = \bigcap_{i=1}^{M} \{\mathbf{x} \in \mathbb{R}^d \mid \mathbf{n}_i^\top \mathbf{x} \le c_i\}$,
where each $\{\mathbf{n}_i^\top \mathbf{x} = c_i\}$ is a hyperplane defining a facet of $A$. Since $P \notin A$, there is an index $i_0$ with $\mathbf{n}_{i_0}^\top P > c_{i_0}$. The corresponding facet
\[
    G := A \cap \{\mathbf{x} \mid \mathbf{n}_{i_0}^\top \mathbf{x} = c_{i_0}\}
\]
is a $(d-1)$-dimensional face, so it has $d$ affinely independent vertices $y^1, \ldots, y^d \in G$. Being vertices of $A$, these are lattice points, and
$\mathrm{aff}(y^1, \ldots, y^d) = \mathrm{aff}(G) = \{\mathbf{n}_{i_0}^\top \mathbf{x} = c_{i_0}\}$
holds. Setting $\mathbf{b} := \mathbf{n}_{i_0}$, $c := c_{i_0}$, and $H := \{\mathbf{x} \mid \mathbf{b}^\top \mathbf{x} \le c\}$, we obtain $A \subset H$, $\mathbf{b}^\top P = \mathbf{n}_{i_0}^\top P > c_{i_0} = c$, and $\partial H = \mathrm{aff}(y^1, \ldots, y^{d})$.
\end{proof}

\subsection{\texorpdfstring{Proof of Theorem~\ref{theo:gamma_lowerbound_on_ball}}{Proof of the unit-ball lower bound}}
\label{subsec:proof_of_theo:gamma_lowerbound_on_ball}

The proof consists of three steps.

\begin{proof}[Proof of \Cref{theo:gamma_lowerbound_on_ball}]

\textbf{Step 1: Reduction to the space of sums.} 
The map $\varphi \colon (\mathbb{R}^d)^N \to \mathbb{R}^d$, $(b^1, \ldots, b^N) \mapsto \sum_{n=1}^N b^n$ is linear, so it maps convex sets to convex sets, and
\[
    \varphi\!\left(\Conv\!\left(\prod_{n=1}^N Y^{(n)} \setminus \{(a^{(n)})_n\}\right)\right) = \Conv\!\left(\varphi\!\left(\prod_{n=1}^N Y^{(n)} \setminus \{(a^{(n)})_n\}\right)\right) = \Conv(W).
\]
Hence \eqref{eq:gamma_ball} can be rewritten as
\begin{equation}
    \gamma(\ell_{\mathrm{sub}}) = \frac{1}{N} \min_{q \in \Conv(W)} \|P - q\|
    \label{eq:gamma_as_dist}
\end{equation}

Next we show $P \notin \Conv(W)$. By \Cref{assu:WIRL-uniqueness}, for each $n$, $a^{(n)} = a^*(\theta^*, s^{(n)})$ is the unique optimal solution under $\theta^*$; that is, for any $b^n \in Y^{(n)}$,
\[
    {\theta^*}^\top b^n \leq {\theta^*}^\top a^{(n)},
\]
with equality if and only if $b^n = a^{(n)}$.

If $(b^n)_n \in \prod_{n=1}^N Y^{(n)} \setminus \{(a^{(n)})_n\}$, then there is some $n_0$ with $b^{n_0} \neq a^{(n_0)}$, so
\[
    {\theta^*}^\top \sum_{n=1}^N b^n = \sum_{n=1}^N {\theta^*}^\top b^n < \sum_{n=1}^N {\theta^*}^\top a^{(n)} = {\theta^*}^\top P.
\]
This strict inequality holds for all elements of $\prod_{n=1}^N Y^{(n)} \setminus \{(a^{(n)})_n\}$, and is preserved under convex combinations. Thus ${\theta^*}^\top q < {\theta^*}^\top P$ for all $q \in \Conv(W)$, and in particular $P \notin \Conv(W)$.

\textbf{Step 2: Reduction to a separating hyperplane through lattice points.}
By \Cref{assu:ILP}, $f(\mathcal{X}\times\mathcal{S}) \subset \mathbb{Z}^d$. For each $s \in \mathcal{S}$, by translating the range of $f$, we may assume that for all $x \in X(s)$,
\[
    f(x, s) \in \prod_{i=1}^d \{0, 1, \ldots, m_i\}.
\]
Then
\begin{equation}
    P = \sum_{n=1}^N a^{(n)} \in \prod_{i=1}^d \{0, 1, \ldots, Nm_i\}, \qquad W \subset \Lambda' := \prod_{i=1}^d \{0, 1, \ldots, Nm_i\}.
    \label{eq:action_sum_in_lattice}
\end{equation}

By Step~1, $P \notin \Conv(W)$, and since $\Conv(W)$ is compact and convex, the distance between $P$ and $\Conv(W)$ is positive. As $\Conv(W)$ is the convex hull of the finite set $W$, it is a convex polytope, and $\{P\} \cap \Conv(W) = \emptyset$.

The vertices of $\Conv(W)$ are all lattice points of $W \subset \Lambda'$. By the assumption of the theorem, $\Conv(W)$ is full-dimensional ($\dim \Conv(W) = d$), and by Step~1 we have $P \notin \Conv(W)$. Hence we may apply \Cref{prop:str_separating_hyperplane_theorem_for_polyhedra} to the $d$-dimensional polytope $A = \Conv(W)$ and the lattice point $P$. This yields a closed half-space $H^- = \{x \in \mathbb{R}^d \mid a \cdot x \leq c\}$ and $d$ affinely independent points $y^1, \ldots, y^d \in W \subset \Lambda'$ that are vertices of $\Conv(W)$ (hence lattice points of $\Lambda'$) such that:
$\Conv(W) \subset H^-$,  $P \notin H^-$ (i.e., $a \cdot P > c$), and $\partial H^- = \mathrm{aff}(y^1, \ldots, y^d)$.

Since the boundary hyperplane $\partial H^- = \{x \in \mathbb{R}^d \mid a \cdot x = c\}$ is the affine hull of the lattice points $y^1, \ldots, y^d \in \mathbb{Z}^d$, the normal $a$ can be given as the cofactors of the difference vectors $v_k := y^{k+1} - y^1 \in \mathbb{Z}^d$ ($k = 1, \ldots, d-1$):
$
    a_i = (-1)^i \det(V^{(i)}) \in \mathbb{Z}$ $(i = 1, \ldots, d),
$
where $V^{(i)}$ is the matrix obtained by deleting the $i$-th column of the $(d-1) \times d$ matrix $V$ whose rows are the difference vectors. Also $c = a \cdot y^1 \in \mathbb{Z}$.

Since $P \in \mathbb{Z}^d$, we have $a \cdot P - c \in \mathbb{Z}$, which combined with $a \cdot P > c$ gives
\begin{equation}
    a \cdot P - c \geq 1.
    \label{eq:integer_gap}
\end{equation}

For any $q \in \Conv(W) \subset H^-$ we have $a \cdot q \leq c$, so by the Cauchy--Schwarz inequality,
\[
    \|P - q\| \geq \frac{a \cdot (P - q)}{\|a\|} = \frac{a \cdot P - a \cdot q}{\|a\|} \geq \frac{a \cdot P - c}{\|a\|} \geq \frac{1}{\|a\|}.
\]
Taking the minimum over $q \in \Conv(W)$,
\begin{equation}
    \min_{q \in \Conv(W)} \|P - q\| \geq \frac{1}{\|a\|}.
    \label{eq:dist_lower_by_a}
\end{equation}

\textbf{Step 3: Bounding the norm of the normal vector.}

\begin{proposition}\label{prop:max_1-x_on_simplex}
    Let $\alpha>0$ and set
    $
        F_\alpha(z):=\sum_{i=1}^d(1-z_i)^{\alpha},\text{ for } z\in\Delta^{d-1}.
    $
    Then:
    \begin{enumerate}
        \item if $\alpha\ge 1$, then $F_\alpha$ is convex on $\Delta^{d-1}$ and its maximum is attained at a vertex; in particular $F_\alpha(z)\le d-1$;
        \item if $0<\alpha<1$, then $F_\alpha$ is strictly concave on $\Delta^{d-1}$ and its maximum is attained at the barycenter $z=(1/d,\dots,1/d)$; in particular $F_\alpha(z)\le d(1-1/d)^{\alpha}$.
    \end{enumerate}
\end{proposition}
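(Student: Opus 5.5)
The plan is to analyze $F_\alpha$ through the one-variable function $\phi_\alpha(t):=(1-t)^\alpha$ on $[0,1]$, since $F_\alpha(z)=\sum_i\phi_\alpha(z_i)$ is separable. Differentiating twice gives
\[
\phi_\alpha''(t)=\alpha(\alpha-1)(1-t)^{\alpha-2}\qquad (t<1).
\]
For $\alpha\ge1$ this is nonnegative, so $\phi_\alpha$ is convex on $[0,1]$. For $\alpha=1$ it is affine, and for $1<\alpha<2$ the second derivative blows up at $t=1$, but convexity on the closed interval follows by continuity. Likewise, for $0<\alpha<1$ the second derivative is strictly negative on $[0,1)$, and $\phi_\alpha$ is continuous on $[0,1]$, so it is strictly concave there. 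A sum of convex (resp. strictly concave) functions of distinct coordinates is convex (resp. strictly concave) on the convex set $\Delta^{d-1}$, which gives the convexity and concavity claims.

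For (1), a convex continuous function on a polytope attains its maximum at a vertex, since every point is a convex combination of vertices and Jensen bounds the value by the maximum over the vertices. At a vertex $e_k$ we have $F_\alpha(e_k)=(1-1)^\alpha+\sum_{i\neq k}1^\alpha=d-1$, so $F_\alpha\le d-1$ on $\Delta^{d-1}$.

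For (2), strict concavity gives a unique maximizer. $F_\alpha$ is symmetric under permutations of coordinates, and so is $\Delta^{d-1}$, so every permutation of the maximizer is again a maximizer. Uniqueness then forces the maximizer to be permutation-invariant, hence equal to the barycenter. Equivalently, Jensen on the concave $\phi_\alpha$ gives $\frac1d\sum_i\phi_\alpha(z_i)\le\phi_\alpha(1/d)$. This yields $F_\alpha(z)\le d(1-1/d)^\alpha$.

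The only delicate point is the endpoint behavior at $t=1$, where $\phi_\alpha''$ is singular. I would handle it with a continuity or closure argument, or by checking concavity or convexity directly through the chord inequality, rather than relying on the second derivative up to the boundary. Everything else is routine.
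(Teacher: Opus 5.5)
Your proof is correct and follows essentially the same route as the paper: the second derivative of $(1-t)^\alpha$ gives convexity (resp.\ strict concavity) of the separable function $F_\alpha$, the convex case is maximized at a vertex $e_k$ with value $d-1$, and the concave case is handled by Jensen at the barycenter. Your extra care at the endpoint $t=1$ (the paper states the second-derivative computation only for $t\in[0,1)$) and your symmetry-plus-uniqueness argument are harmless refinements of the same argument.
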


\begin{proof}
    The second derivative of each term $g(t)=(1-t)^{\alpha}$ is $g''(t)=\alpha(\alpha-1)(1-t)^{\alpha-2}$. Let $t\in[0,1)$.
    \begin{itemize}
        \item if $\alpha\ge 1$, then $g''\ge 0$, so $g$ is convex. Hence $F_\alpha$ is convex on $\Delta^{d-1}$ and its maximum is attained at a vertex $\mathbf{e}_i = (\delta_{ij})_j$, where $\delta_{ij}$ is the Kronecker delta. Then $F_\alpha(\mathbf{e}_i)=(1-1)^{\alpha}+(d-1)(1-0)^{\alpha}=d-1$, which proves the claim;
        \item if $0<\alpha<1$, then $g''<0$, so $g$ is strictly concave. Hence $F_\alpha$ is strictly concave on $\Delta^{d-1}$, and by Jensen's inequality the maximizer is the barycenter $z=(1/d,\dots,1/d)$, with $F_\alpha(1/d,\dots,1/d)=d(1-1/d)^{\alpha}$. \qedhere
    \end{itemize}
\end{proof}

\begin{proposition}
    \label{prop:leq_m-m_i_d-1}
    Let $d\ge2$ and $m=(m_1,\ldots,m_d)\in\mathbb{R}^d$. Then
    \[
    \sum_{i=1}^d\left(\sum_{j\ne i}m_j^2\right)^{d-1}
    \le (d-1) \|m\|_2^{2(d-1)}.
    \]
\end{proposition}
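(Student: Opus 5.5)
The plan is to reduce the claim to \Cref{prop:max_1-x_on_simplex}. If $m=0$ both sides vanish, so assume $\|m\|_2>0$. Writing $S:=\|m\|_2^2=\sum_j m_j^2$, we have $\sum_{j\ne i}m_j^2 = S - m_i^2$. Introduce the normalized weights
\[
    z_i := \frac{m_i^2}{S}\qquad (i=1,\ldots,d),
\]
which are nonnegative and sum to $1$, so $z\in\Delta^{d-1}$. Then $\sum_{j\ne i}m_j^2 = S(1-z_i)$.

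Substituting this expression, the left-hand side becomes
\[
    \sum_{i=1}^d \bigl(S(1-z_i)\bigr)^{d-1} = S^{d-1}\sum_{i=1}^d (1-z_i)^{d-1} = \|m\|_2^{2(d-1)}\,F_{d-1}(z),
\]
in the notation of \Cref{prop:max_1-x_on_simplex} with $\alpha=d-1$.

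Since $d\ge2$, we have $\alpha=d-1\ge1$, so part (1) of \Cref{prop:max_1-x_on_simplex} applies: $F_{d-1}$ is convex on $\Delta^{d-1}$, its maximum over the simplex is attained at a vertex $\mathbf e_i$, and $F_{d-1}(z)\le d-1$. Multiplying by $\|m\|_2^{2(d-1)}$ gives the claim.

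No step is a real obstacle. The only point needing care is the boundary case $d=2$, where $\alpha=1$ and $F_1(z)=\sum_i(1-z_i)=d-1$ identically; this is still covered by part (1), which only requires $\alpha\ge1$.
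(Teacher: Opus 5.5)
Your proof is correct and takes essentially the same approach as the paper's: set $z_i=m_i^2/\|m\|_2^2\in\Delta^{d-1}$, factor out $\|m\|_2^{2(d-1)}$, and apply part (1) of \Cref{prop:max_1-x_on_simplex} with $\alpha=d-1\ge1$. Your handling of the case $m=0$ and your remark that $d=2$ gives $F_1\equiv d-1$ are both fine.
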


\begin{proof}
    If $m = 0$ the claim holds, so we may assume $m \neq 0$. The left-hand side equals
    \[
    \sum_{i=1}^d\left(\sum_{j\ne i}m_j^2\right)^{d-1}
    =
    \sum_{i=1}^d\left(\| m \|_2^2 - m_i^2\right)^{d-1}
    =
    \| m \|_2^{2(d-1)}
    \sum_{i=1}^d\left( 1 - \frac{m_i^2}{\|m \|_2^2}\right)^{d-1}
    .
    \]
    Setting $z_i = \frac{m_i^2}{\|m \|_2^2}$ and $z = (z_1, \ldots, z_d )$, we have $z \in \Delta^{d-1}$. By \Cref{prop:max_1-x_on_simplex},
    \[
        \| m \|_2^{2(d-1)}
        \sum_{i=1}^d\left( 1 - \frac{m_i^2}{\|m \|_2^2}\right)^{d-1}
        \leq
        (d-1)
        \| m \|_2^{2(d-1)},
    \]
    which proves the claim.
\end{proof}

\begin{proposition}
    \label{prop:norm_a_bound}
    Let $d\ge2$, and let $y^1,\ldots,y^d\in\mathbb{R}^d$ be affinely independent points whose difference vectors $v_k:=y^{k+1}-y^1$ ($k=1,\ldots,d-1$) satisfy $|(v_k)_j|\le m_j$ for all $j$. Let $V$ be the $(d-1)\times d$ matrix whose rows are $v_1,\ldots,v_{d-1}$, let $V^{(i)}$ be the matrix obtained by deleting the $i$-th column of $V$, and set $a_i:=(-1)^i\det(V^{(i)})$ ($i=1,\ldots,d$). Then
        \[
    \|a\|^2 \le \sum_{i=1}^d\left(\sum_{j\ne i}m_j^2\right)^{d-1}
    \leq (d-1) \| m \|_2^{2(d-1)}.
    \]
\end{proposition}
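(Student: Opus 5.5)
The plan is to apply Hadamard's inequality to each cofactor and then invoke \Cref{prop:leq_m-m_i_d-1} for the second inequality.

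Fix $i$. The matrix $V^{(i)}$ is a $(d-1)\times(d-1)$ matrix whose rows are $v_1,\ldots,v_{d-1}$ with the $i$-th coordinate deleted; call these rows $v_k^{(i)}\in\mathbb{R}^{d-1}$. By Hadamard's inequality,
\[
    a_i^2=\det(V^{(i)})^2\le\prod_{k=1}^{d-1}\bigl\|v_k^{(i)}\bigr\|_2^2 .
\]
By the hypothesis $|(v_k)_j|\le m_j$, each row satisfies
\[
    \bigl\|v_k^{(i)}\bigr\|_2^2=\sum_{j\ne i}(v_k)_j^2\le\sum_{j\ne i}m_j^2 .
\]
Therefore
\[
    a_i^2\le\Bigl(\sum_{j\ne i}m_j^2\Bigr)^{d-1}.
\]
Summing over $i=1,\ldots,d$ gives the first inequality, $\|a\|^2\le\sum_i\bigl(\sum_{j\ne i}m_j^2\bigr)^{d-1}$.

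The second inequality is exactly \Cref{prop:leq_m-m_i_d-1}, which rests on the convexity case $\alpha=d-1\ge1$ of \Cref{prop:max_1-x_on_simplex}, so nothing new is needed there.

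There is no real obstacle. The only points to handle carefully are, first, that Hadamard's inequality is applied to the rows of the square matrix $V^{(i)}$, which are the truncated difference vectors rather than the full $v_k$, so the bound runs over $j\ne i$ only. Second, the degenerate case $d=2$, where $V^{(i)}$ is $1\times1$, makes the bound an equality form of $|(v_1)_j|\le m_j$ and is covered by the same argument. Affine independence is not needed for the inequality itself; it only ensures $a\neq0$ in the applications.
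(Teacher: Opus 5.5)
Your proposal is correct and matches the paper's proof: you apply Hadamard's inequality to the rows $v_k^{(i)}$ of $V^{(i)}$, bound $\|v_k^{(i)}\|^2\le\sum_{j\ne i}m_j^2$ coordinatewise, sum over $i$, and then cite \Cref{prop:leq_m-m_i_d-1} for the second inequality. Your side remarks are also right: the $d=2$ case needs no special treatment, and affine independence is not needed for the inequality itself.
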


\begin{proof}
    By Hadamard's inequality,
    \[
    |a_i|=|\det(V^{(i)})|\le \prod_{k=1}^{d-1}\|v_k^{(i)}\|.
    \]
    For the norm of each $v_k^{(i)}$, since $|(v_k)_j|\le m_j$,
    \[
    \|v_k^{(i)}\|^2=\sum_{j\ne i}(v_k)_j^2\le \sum_{j\ne i}m_j^2.
    \]
    This bound is independent of $k$, so
    \[
    |a_i|\le \prod_{k=1}^{d-1}\sqrt{\sum_{j\ne i}m_j^2}
    =\left(\sum_{j\ne i}m_j^2\right)^{(d-1)/2}.
    \]
    By \Cref{prop:leq_m-m_i_d-1},
    \[
    \|a\|^2 = \sum_{i=1}^d a_i^2 \le \sum_{i=1}^d\left(\sum_{j\ne i}m_j^2\right)^{d-1}
    \leq (d-1) \| m \|_2^{2(d-1)}.
    \]
\end{proof}

With the above preparations, we complete Step~3 of the proof of \Cref{theo:gamma_lowerbound_on_ball}.
The hyperplane $\partial H^-$ passes through $d$ affinely independent lattice points $y^1, \ldots, y^d$ in $\Lambda' = \prod_{i=1}^d \{0, 1, \ldots, Nm_i\}$, and the difference vectors $v_k = y^{k+1} - y^1$ satisfy $|(v_k)_j| \leq Nm_j$.

By \Cref{prop:norm_a_bound} (applied with $m$ replaced by $Nm$),
\begin{equation}
    \|a\|^2 \leq (d-1)\,\|Nm\|_2^{2(d-1)} = (d-1)\,N^{2(d-1)}\,\|m\|_2^{2(d-1)},
    \text{ i.e., }
    \|a\| \leq \sqrt{d-1}\, N^{d-1}\, \|m\|_2^{d-1}.
    \label{eq:norm_a_bound_applied}
\end{equation}

Combining \eqref{eq:gamma_as_dist}, \eqref{eq:dist_lower_by_a}, and \eqref{eq:norm_a_bound_applied},
\begin{align*}
    \gamma(\ell_{\mathrm{sub}})
    &= \frac{1}{N} \min_{q \in \Conv(W)} \|P - q\| 
    \geq \frac{1}{N} \cdot \frac{1}{\|a\|} 
    \geq \frac{1}{N} \cdot \frac{1}{\sqrt{d-1}\, N^{d-1}\, \|m\|_2^{d-1}} 
    = \frac{1}{N^d\, \sqrt{d-1}\, \|m\|_2^{d-1}}.
\end{align*}

\end{proof}

\subsection{\texorpdfstring{Proof of Theorem~\ref{theo:gamma_lowerbound_on_simplex}}{Proof of the probability-simplex lower bound}}
\label{subsec:proof_of_theo:gamma_lowerbound_on_simplex}

The proof consists of three steps. Steps~1 and~2 are common to the proof of \Cref{theo:gamma_lowerbound_on_ball}, while Step~3 is specific to the probability-simplex case.

\begin{proof}[Proof of \Cref{theo:gamma_lowerbound_on_simplex}]

\textbf{Step 1: Reduction to the space of sums.}
By the linearity of $\varphi \colon (\mathbb{R}^d)^N \to \mathbb{R}^d$, $(b^1, \ldots, b^N) \mapsto \sum_{n=1}^N b^n$, \eqref{eq:gamma_simplex} can be rewritten as
\begin{equation}
    \gamma(\ell_{\mathrm{sub}}) = \frac{1}{N} \min_{q \in \Conv(W)} \max_{i=1,\ldots,d}(P_i - q_i).
    \label{eq:gamma_simplex_as_dist}
\end{equation}

By the same argument as in Step~1 of the proof of \Cref{theo:gamma_lowerbound_on_ball} (uniqueness via \Cref{assu:WIRL-uniqueness}), $P \notin \Conv(W)$. Moreover, by \Cref{prop:grad_based_opt_achieve_min_SL_before} ($\gamma(\ell_{\mathrm{sub}})>0$) and \cref{eq:gamma_simplex_as_dist}, for any $q \in \Conv(W)$,
\begin{equation}
    \max_{i=1,\ldots,d}(P_i - q_i) > 0.
    \label{eq:positive_max}
\end{equation}

Rewriting \eqref{eq:positive_max} in set-theoretic terms,
\begin{equation}
    \Conv(W) \cap (P + \mathbb{R}_{\geq 0}^d) = \emptyset,
    \label{eq:disjoint_orthant}
\end{equation}
where $P + \mathbb{R}_{\geq 0}^d = \{x \in \mathbb{R}^d \mid x_i \geq P_i,\ \forall\, i\}$ is the translate of the nonnegative orthant with vertex $P$.

\textbf{Step 2: Reduction to the lattice structure.}%
As in Step~2 of the proof of \Cref{theo:gamma_lowerbound_on_ball}, by \Cref{assu:ILP} and a translation,
\begin{equation}
    P \in \prod_{i=1}^d \{0, 1, \ldots, Nm_i\}, \qquad W \subset \Lambda' := \prod_{i=1}^d \{0, 1, \ldots, Nm_i\}.
    \label{eq:action_sum_in_lattice_simplex}
\end{equation}

\textbf{Step 3: Reduction to a lattice hyperplane with nonnegative normal.}
We first verify that we may assume $m_i \ge 1$ for each $i$. By \Cref{assu:ILP}, each $m_i$ is a nonnegative integer. For a coordinate $i$ with $m_i = 0$, the feature $f_i$ is constant on $X(s)$ for every $s$, so $b^n_i = a^{(n)}_i$ for every $n$ and every $b^n \in Y^{(n)}$; that is, $P_i - q_i = 0$ for every $q \in \Conv(W)$, and the coordinate $i$ does not contribute to $\max_i(P_i - q_i)$ in \cref{eq:gamma_simplex_as_dist} (by \cref{eq:positive_max}, the positive maximum is attained at a coordinate with $m_i \ge 1$). Hence, applying the argument below to the $d'$-dimensional problem obtained by removing all coordinates with $m_i = 0$ ($1 \le d' \le d$; the case $d' = 0$ is excluded, since then every $Y^{(n)}$ would be a singleton and the index set of the $\min$ in \cref{eq:gamma_simplex} would be empty), and writing $m'$ for the vector of the nonzero components of $m$, we obtain
\[
    \gamma(\ell_{\mathrm{sub}})
    \ \ge\ \frac{1}{N^{d'}\max(d'-1,\sqrt2)\,\|m'\|_2^{d'-1}}
    \ \ge\ \frac{1}{N^{d}\max(d-1,\sqrt2)\,\|m\|_2^{d-1}}
\]
(the last inequality uses $N \ge 1$, $d' \le d$, and $\|m'\|_2 = \|m\|_2 \ge 1$). In what follows we assume $m_i \ge 1$ for each $i$. The estimate below (\Cref{prop:leq_m-m_i_d-1,prop:norm_a_bound}) requires the reduced dimension to satisfy $d'\ge2$; if $d'=1$, then $\Delta^{0}$ is a single point and, since $W\subset\mathbb{Z}$ and $P\in\mathbb{Z}$ with $P-q>0$ for all $q\in\Conv(W)$ by \cref{eq:positive_max}, integrality gives $\min_{q\in\Conv(W)}(P-q)\ge1$, whence $\gamma(\ell_{\mathrm{sub}})\ge 1/N\ge 1/(N\sqrt2)$ and the bound already holds. We may therefore assume $d'\ge2$. Consider the downward closure
\[
    Q := \Conv(W) - \mathbb{R}_{\geq 0}^d = \{ q - v \mid q \in \Conv(W),\ v \in \mathbb{R}_{\geq 0}^d \}.
\]
Since $\Conv(W)$ is a bounded convex polytope and $\mathbb{R}_{\geq 0}^d$ is a convex cone, $Q$ is a polyhedron (an intersection of finitely many closed half-spaces); and since for any $q_0 \in \Conv(W)$ it contains $Q \supseteq q_0 - \mathbb{R}_{\geq 0}^d$ (which is full $d$-dimensional), $Q$ is also full $d$-dimensional. Moreover $Q$ is \emph{downward closed}, that is, $x \in Q$ and $x' \le x$ (componentwise) imply $x' \in Q$. Hence the normal $a$ of each facet inequality $a \cdot x \le c$ defining $Q$ can be taken to satisfy $a \ge 0$ (if $a_i < 0$, then for $x \in Q$ downward closedness gives $x - t e_i \in Q$ for all $t \ge 0$, so $a \cdot x - t a_i \le c$ holds for all $t \ge 0$, a contradiction as $t \to \infty$).
\eqref{eq:disjoint_orthant} ($\Conv(W) \cap (P + \mathbb{R}_{\geq 0}^d) = \emptyset$) is equivalent to $P \notin Q$, and since the full $d$-dimensional $Q$ equals the intersection of the finitely many closed half-spaces defining its facets (\Cref{def:face_facet}), $P$ violates some facet inequality. That is, there exist $a^\circ \in \mathbb{R}_{\geq 0}^d \setminus \{0\}$ and $c^\circ \in \mathbb{R}$ such that
\[
    \Conv(W) \subseteq Q \subseteq \{ x \mid a^\circ \cdot x \leq c^\circ \},
    \qquad a^\circ \cdot P > c^\circ.
\]
The facet $F := Q \cap \{ x \mid a^\circ \cdot x = c^\circ \}$ is $(d-1)$-dimensional (the vertices of $Q$ coincide with vertices of $\Conv(W)$, hence are lattice points of $\Lambda'$), and its edge directions are either (i) difference vectors between vertices of $\Conv(W)$, or (ii) coordinate directions $-e_i$ (for $i$ with $a^\circ_i = 0$; by downward closedness $F$ may extend as a ray in this direction). Since $\mathrm{aff}(F) = \{ x \mid a^\circ \cdot x = c^\circ \}$ is $(d-1)$-dimensional, a vertex $y^1 \in W$ of $F$ together with $d-1$ affinely independent vectors taken from these edge directions yields affinely independent lattice points $y^1, \ldots, y^d \in \mathbb{Z}^d$ with $\mathrm{aff}(y^1, \ldots, y^d) = \{ x \mid a^\circ \cdot x = c^\circ \}$. The difference vectors $v_k := y^{k+1} - y^1$ satisfy, in case (i), $|(v_k)_j| \leq Nm_j$ (both endpoints are lattice points of $\Lambda'$), and in case (ii), $v_k = -e_i$, hence $|(v_k)_j| = \delta_{ij} \leq 1 \leq Nm_j$.
Moreover, since $a^\circ \geq 0$ and $a^\circ \cdot P > c^\circ$, for any $x \in \{a^\circ \cdot x = c^\circ\}$ we have $\sum_i a^\circ_i (P_i - x_i) = a^\circ \cdot P - c^\circ > 0$, so there is some $j$ with $a^\circ_j > 0$ and $P_j - x_j > 0$, whence $\max_i(P_i - x_i) > 0$.

Therefore, we obtain a hyperplane $H := \mathrm{aff}(y^1,\ldots,y^d)$ ($= \{x \mid a^\circ \cdot x = c^\circ\}$) containing $d$ affinely independent lattice points $y^1, \ldots, y^d$ with difference vectors in $\Lambda' = \prod_{i=1}^d \{0, 1, \ldots, Nm_i\}$ such that:
\begin{itemize}
    \item $\Conv(W)$ is contained in the closed half-space $H^- = \{x \mid a \cdot x \leq c\}$ defined by $H$ (with $a \cdot P > c$);
    \item $\forall\, x \in H,\ \max_{i=1,\ldots,d}(P_i - x_i) > 0$.
\end{itemize}

Construct the normal vector $a$ as the cofactors of the $(d-1) \times d$ integer matrix $V$ whose rows are the difference vectors $v_k := y^{k+1} - y^1 \in \mathbb{Z}^d$ ($k = 1, \ldots, d-1$):
$
    a_i = (-1)^i \det(V^{(i)}) \in \mathbb{Z} \qquad (i = 1, \ldots, d).
$
Then $c = a \cdot y^1 \in \mathbb{Z}$. This $a$ is an integer normal of the hyperplane $\mathrm{aff}(y^1,\ldots,y^d) = \{x \mid a^\circ \cdot x = c^\circ\}$, and is parallel to $a^\circ$. Since $H \cap (P + \mathbb{R}_{\geq 0}^d) = \emptyset$, choosing the sign of $a$ appropriately yields
\begin{equation}
    a_i \geq 0 \quad (\forall\, i), \qquad a \cdot P - c \geq 1.
    \label{eq:sign_condition_simplex}
\end{equation}

For any $q \in \Conv(W) \subset H^-$, setting $\lambda_i := \frac{a_i}{\sum_{j=1}^d a_j} \geq 0$ (so $\sum_{i=1}^d \lambda_i = 1$),
\begin{equation}
    \max_{i=1,\ldots,d}(P_i - q_i) \geq \sum_{i=1}^d \lambda_i (P_i - q_i) = \frac{a \cdot (P - q)}{\sum_{j=1}^d a_j} \geq \frac{a \cdot P - c}{\sum_{j=1}^d a_j} \geq \frac{1}{\sum_{j=1}^d a_j}.
    \label{eq:convex_lower_simplex}
\end{equation}

We bound $\sum_{j=1}^d a_j$. The difference vectors satisfy $|(v_k)_j| \leq Nm_j$, so by Hadamard's inequality,
\[
    a_i \leq |a_i| \leq \left(\sum_{j \neq i} (Nm_j)^2\right)^{(d-1)/2}.
\]
Since $a_i \geq 0$,
\[
    \sum_{i=1}^d a_i \leq \sum_{i=1}^d \left(\sum_{j \neq i} (Nm_j)^2\right)^{(d-1)/2} = N^{d-1} \|m\|_2^{d-1} \sum_{i=1}^d \left(1 - \frac{m_i^2}{\|m\|_2^2}\right)^{(d-1)/2}.
\]
By \Cref{prop:max_1-x_on_simplex},
\begin{equation}
    \sum_{i=1}^d a_i \leq \max(d-1, \sqrt{2})\, N^{d-1}\, \|m\|_2^{d-1}.
    \label{eq:sum_a_bound_simplex}
\end{equation}

Combining \eqref{eq:gamma_simplex_as_dist}, \eqref{eq:convex_lower_simplex}, and \eqref{eq:sum_a_bound_simplex},
\begin{align*}
    \gamma(\ell_{\mathrm{sub}})
    &= \frac{1}{N} \min_{q \in \Conv(W)} \max_{i=1,\ldots,d}(P_i - q_i) 
    \geq \frac{1}{N} \cdot \frac{1}{\sum_{j=1}^d a_j} \\
    &\geq \frac{1}{N} \cdot \frac{1}{\max(d-1, \sqrt{2})\, N^{d-1}\, \|m\|_2^{d-1}} 
    = \frac{1}{N^d\, \max(d-1, \sqrt{2})\, \|m\|_2^{d-1}}.
\end{align*}

\end{proof}

\subsection{\texorpdfstring{Tightness of the general-ILP lower bounds}{Tightness of the general-ILP lower bounds}}
\label{subsec:tightness}

The lower bounds of \Cref{theo:gamma_lowerbound_on_ball,theo:gamma_lowerbound_on_simplex} decay exponentially in the dimension $d$. The following explicit family shows that $\gamma(\ell_{\mathrm{sub}})$ itself can be exponentially small in $d$, so that the iteration upper bound $T=O(1/\gamma(\ell_{\mathrm{sub}})^2)$ can be exponentially large in $d$.

\begin{proposition}
\label{prop:tightness}
Fix an integer $K\ge 2$ and a dimension $d\ge 2$, and set $a=(1,K,K^2,\ldots,K^{d-1})\in\mathbb{Z}^d$. Consider the single-sample DDIOP ($N=1$) with identity features $f(x,s)=x$ and feasible region
\begin{align*}
    & X =\{p_0,p_1,\ldots,p_{d-1}, p_d,\,a^*\}\subset\mathbb{Z}^d,
    \\
    & p_0=0,\quad p_i=K e_i-e_{i+1}\ (i=1,\ldots,d-1),\quad p_d=-e_d,\quad a^*=e_1,
\end{align*}
whose feature ranges satisfy $m_i\le K+1$ for every $i$, and let the true weight be $\theta^*=a/\|a\|$ (normalized in $\|\cdot\|_1$ for $\Theta=\Delta^{d-1}$ and in $\|\cdot\|_2$ for $\Theta=B^{d}$). Then $a^*$ is the unique maximizer of $\theta^{*\top}x$ over $X$, and, with $\|a\|_1=\tfrac{K^{d}-1}{K-1}$ and $\|a\|_2=\sqrt{\tfrac{K^{2d}-1}{K^2-1}}$,
\[
    \tfrac{1}{\|a\|_1}\le \gamma(\ell_{\mathrm{sub}})\le \tfrac{1}{K^{d-1}}\quad(\Theta=\Delta^{d-1}),
    \qquad
    \tfrac{1}{\|a\|_2}\le \gamma(\ell_{\mathrm{sub}})\le \tfrac{1}{K^{d-1}}\quad(\Theta=B^{d}).
\]
In particular $\gamma(\ell_{\mathrm{sub}})=\Theta(K^{-(d-1)})$ in both cases.
\end{proposition}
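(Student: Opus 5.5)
The plan is to compute both bounds directly from the definition \cref{eq:gamma_SL}. I will use the integer vector $a=(1,K,\ldots,K^{d-1})$ as a certificate for the lower bound, and an explicit convex combination of non-optimal points, fed into \Cref{prop:min_max_repre_gamma}, for the upper bound. With $N=1$ and identity features, $a^{(1)}=a^*=e_1$, and $Y^{(1)}$ is the vertex set of $\Conv X$.

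\emph{Step 1 (values of $a$ on $X$ and uniqueness).} For $i=1,\ldots,d-1$ we have $a\cdot p_i=K\cdot K^{i-1}-K^{i}=0$. Also $a\cdot p_0=0$, $a\cdot p_d=-K^{d-1}$ and $a\cdot a^*=1$. Hence $a\cdot x\le 0<1=a\cdot a^*$ for every $x\in X\setminus\{a^*\}$, so $a^*$ is the unique maximizer of $\theta^{*\top}x$ for either normalization. Since $\Conv X$ is a polytope, $a^*$ is one of its vertices, and \Cref{assu:WIRL-uniqueness} holds.

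\emph{Step 2 (vertex set).} I will check that every point of $X$ is a vertex of $\Conv X$, so that $Y^{(1)}=X$. This matters because the upper bound needs the points $p_0,\ldots,p_{d-1}$ to actually appear in $Y^{(1)}\setminus\{a^*\}$. For each point I will exhibit a strictly separating linear functional:
\begin{itemize}
\item for $p_0$, the functional $c=(-1,\ldots,-1,\varepsilon)$ with small $\varepsilon>0$, which is strictly negative on every other point;
\item for $a^*$, the functional $a$ from Step 1;
\item for $p_i$ with $1\le i\le d-1$, a functional such as $x_i-x_{i+1}$, corrected by small multiples of other coordinates where ties occur;
\item for $p_d$, the functional $-x_{d-1}-x_d$, again with a small correction if needed.
\end{itemize}
This is the only case-by-case and somewhat tedious part, and I expect it to be the main obstacle. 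A single uniform family of functionals would be preferable, and I would look for one before resorting to case analysis.

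\emph{Step 3 (lower bound).} In \cref{eq:gamma_SL} I take $\theta=\theta^*=a/\|a\|$, with $\|a\|=\|a\|_1$ for $\Delta^{d-1}$ and $\|a\|=\|a\|_2$ for $B^d$. For every $b\in Y^{(1)}\setminus\{a^*\}$, Step 1 gives
\[
\theta^{*\top}(a^*-b)=\frac{1-a\cdot b}{\|a\|}\ge\frac{1}{\|a\|},
\]
so $\gamma(\ell_{\mathrm{sub}})\ge 1/\|a\|$ in both cases.

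\emph{Step 4 (upper bound).} By \Cref{prop:min_max_repre_gamma} it suffices to find one $q\in\Conv(Y^{(1)}\setminus\{a^*\})$ with $\max_{\theta\in\Theta}\theta^\top(a^*-q)\le K^{-(d-1)}$. I take the telescoping combination
\[
q=\sum_{i=1}^{d-1}K^{-i}p_i+\Bigl(1-\sum_{i=1}^{d-1}K^{-i}\Bigr)p_0 .
\]
This is a valid convex combination because $\sum_{i\ge1}K^{-i}\le 1/(K-1)\le 1$ for $K\ge2$. The $e_1$-coefficient of $q$ is $K^{-1}\cdot K=1$. For $2\le j\le d-1$, the $e_j$-coefficient is $K^{-j}K-K^{-(j-1)}=0$. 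The $e_d$-coefficient is $-K^{-(d-1)}$. Hence $a^*-q=K^{-(d-1)}e_d$, and therefore
\[
\max_{\theta\in\Delta^{d-1}}\theta^\top(a^*-q)=\max_{\theta\in B^d}\theta^\top(a^*-q)=K^{-(d-1)}.
\]
This gives the upper bound in both cases.

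\emph{Step 5 (conclusion).} Finally, $\|a\|_1=\tfrac{K^d-1}{K-1}$ and $\|a\|_2$ are both at most a constant multiple of $K^{d-1}$ (for instance $\|a\|_1\le 2K^{d-1}$ when $K\ge2$). Together with Steps 3 and 4 this yields $\gamma(\ell_{\mathrm{sub}})=\Theta(K^{-(d-1)})$. The range bound $m_i\le K+1$ is read off the coordinates of the points in $X$.
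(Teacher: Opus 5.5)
Your proposal is correct and follows essentially the paper's proof. The certificate $\theta^*=a/\|a\|$ gives the lower bound; the paper phrases the same estimate dually, as $\max_i(a^*_i-q_i)\ge\sum_i\tfrac{a_i}{\|a\|_1}(a^*_i-q_i)$ over $q\in\Conv(W)$. The same telescoping point $q^\circ=e_1-K^{-(d-1)}e_d$ gives the upper bound.

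Your Step~2 supplies something the paper skips: it sets $W=X\setminus\{a^*\}$ without checking that $X$ is in convex position. Only $p_0,\dots,p_{d-1}$ actually need to be vertices. The functional $x_i-x_{i+1}$ already singles out $p_i$ with no ties. For $p_d$, which is not needed, $-x_{d-1}-x_d$ ties with $p_{d-2}$ when $d\ge3$, and $-x_{d-1}-2x_d$ repairs this.
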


\begin{proof}
Since $a\cdot p_0=0$, $a\cdot p_i=K\cdot K^{i-1}-K^{i}=0$ ($i=1,\ldots,d-1$), $a\cdot p_d=-K^{d-1}$, and $a\cdot a^*=1$, we have $\theta^{*\top}a^*=1/\|a\|>0\ge\theta^{*\top}p$ for every $p\in\{p_0,\ldots,p_{d-1},p_d\}$, so $a^*$ is the unique maximizer; thus $a^{(1)}=a^*$ and \Cref{assu:WIRL-uniqueness} holds. Put $W=X\setminus\{a^*\}=\{p_0,\ldots,p_{d-1},p_d\}$. Since $p_0,\ldots,p_{d-1}$ span the hyperplane $\{a\cdot x=0\}$ and $p_d=-e_d\notin\{a\cdot x=0\}$, $\Conv(W)$ is full-dimensional, so \Cref{theo:gamma_lowerbound_on_ball} applies to this family. By \Cref{prop:min_max_repre_gamma}, $\gamma(\ell_{\mathrm{sub}})=\min_{q\in\Conv(W)}\max_{\theta\in\Theta}\theta^\top(a^*-q)$. Every $q\in\Conv(W)$ satisfies $a\cdot q\le0$, whereas $a\cdot a^*=1$.

\emph{Lower bounds.} If $\Theta=\Delta^{d-1}$, then $\max_{\theta}\theta^\top(a^*-q)=\max_i(a^*_i-q_i)$, and from $\sum_i a_i(a^*_i-q_i)=a\cdot(a^*-q)=1-a\cdot q\ge1$ with $a_i>0$ we get $\max_i(a^*_i-q_i)\ge 1/\sum_i a_i=1/\|a\|_1$. If $\Theta=B^{d}$, then $\max_\theta\theta^\top(a^*-q)=\|a^*-q\|_2\ge|a\cdot(a^*-q)|/\|a\|_2\ge1/\|a\|_2$.

\emph{Upper bound.} Take $q^\circ=\lambda_0 p_0+\sum_{i=1}^{d-1}K^{-i}p_i$ with $\lambda_0=1-\sum_{i=1}^{d-1}K^{-i}\ge0$. A direct computation gives $q^\circ=e_1-K^{-(d-1)}e_d\in\Conv(W)$, so $a^*-q^\circ=K^{-(d-1)}e_d$, whence $\max_i(a^*_i-q^\circ_i)=K^{-(d-1)}$ and $\|a^*-q^\circ\|_2=K^{-(d-1)}$; thus $\gamma(\ell_{\mathrm{sub}})\le K^{-(d-1)}$ in both cases. Finally $\|a\|_1=\tfrac{K^{d}-1}{K-1}\le 2K^{d-1}$ and $\|a\|_2=\sqrt{\tfrac{K^{2d}-1}{K^2-1}}\le\tfrac{K}{\sqrt{K^2-1}}\,K^{d-1}\le\tfrac{2}{\sqrt3}\,K^{d-1}$ (both using $K\ge2$), so $\gamma(\ell_{\mathrm{sub}})=\Theta(K^{-(d-1)})$ with constants independent of $d$.
\end{proof}

\section{\texorpdfstring{Iteration upper bounds for integer linear programs}{Iteration upper bounds for integer linear programs}}
\label{sec:iteration_bound_from_gamma}

The iteration upper bound of PSGD (SRSS) (\Cref{cor:convergence_rate_NSS_eg_mini}) and that of PSGD (SRSL) (\Cref{cor:convergence_rate_NSL_eg_mini}) are both of the form $O\!\big(\gamma(\ell_{\mathrm{sub}})^{-2}\big)\times\mathrm{poly}(\mathrm{diam}(\Theta),\beta,L(\ell_{\mathrm{sub}}))$. Hence, substituting each lower bound on $\gamma(\ell_{\mathrm{sub}})$ from \Cref{tab:gamma_lowerbound} (obtained in \Cref{sec:gamma_lowerbound_ILP_appx}), the SRSS and SRSL iteration upper bounds for solving \Cref{eq:IOP_linear} exactly are obtained as explicit functions of the problem size (both share the same $O$ order). We summarize the results in \Cref{tab:iteration_upperbound} (for the probability simplex $\Theta=\Delta^{d-1}$; we abbreviate $\mathrm{poly}=\mathrm{poly}(\mathrm{diam}(\Theta),\beta,L(\ell_{\mathrm{sub}}))$). The totally unimodular case ($O(N^{2}\,d^{d+1}4^{d-1}\,\mathrm{poly})$) illustrated in \Cref{sec:gamma_lowerbound_main} corresponds to the last row of this table.

\begin{table}[t]
    \caption{Iteration upper bounds of PSGD (SRSS) and PSGD (SRSL), obtained by substituting each lower bound on $\gamma(\ell_{\mathrm{sub}})$ from \Cref{tab:gamma_lowerbound} into \Cref{cor:convergence_rate_NSS_eg_mini,cor:convergence_rate_NSL_eg_mini} ($\Theta=\Delta^{d-1}$, $\mathrm{poly}=\mathrm{poly}(\mathrm{diam}(\Theta),\beta,L(\ell_{\mathrm{sub}}))$; both share the same $O$ order).}
    \label{tab:iteration_upperbound}
    \centering
    \renewcommand{\arraystretch}{1.5}
    
    \begin{tabular}{ll}
        \toprule
        Setting & Iteration upper bound (SRSS, SRSL) \\
        \midrule
        ILP & $O\!\big(N^{2d}\,d^{2}\,\lVert m\rVert_2^{2(d-1)}\,\mathrm{poly}\big)$ \\
        M-convex & $O\!\big(N^{2}d^{4}\,\mathrm{poly}\big)$ \\
        M$^\natural$-convex & $O\!\big(N^{2}d^{4}\,\mathrm{poly}\big)$ \\
        Linear inequality ($C=g_\infty(\widetilde{A})$) & $O\!\big(N^{2}\,d^{d+1}(2C)^{2(d-1)}\,\mathrm{poly}\big)$ \\
        Totally unimodular ($C=1$) & $O\!\big(N^{2}\,d^{d+1}4^{d-1}\,\mathrm{poly}\big)$ \\
        \bottomrule
    \end{tabular}
    
\end{table}

For M-convex and M$^\natural$-convex sets the iteration count is bounded by a polynomial in the dimension $d$, whereas for general ILPs and linear inequality constraints it can grow exponentially in $d$ (\Cref{rem:gamma_hardness_main}).

The iteration upper bounds for attaining the PLF minimum (\Cref{theo:achieve_min_PLF_PSGD_SRSS_readable,theo:achieve_min_PLF_PSGD_SRSL_readable}) are also of the form $O\!\big(\gamma(\ell_{\mathrm{sub}})^{-2}\big)\times\mathrm{poly}(\mathrm{diam}(\Theta),\beta,L(\ell_{\mathrm{sub}}))$. Hence, substituting each lower bound on $\gamma(\ell_{\mathrm{sub}})$ from \Cref{tab:gamma_lowerbound} yields the iteration upper bound for attaining the PLF minimum as an explicit function of the problem size as well, whose $O$ order coincides with that in \Cref{tab:iteration_upperbound}.

\section{Conclusion}
\label{sec:conclusion}

In this paper, for data-driven inverse optimization whose forward problem is an integer linear program (ILP), we gave explicit lower bounds on the geometric constant $\gamma(\ell_{\mathrm{sub}})$ of the suboptimality loss in terms of the structure of integer programming (total unimodularity, Graver bases, and M-convexity/M$^\natural$-convexity). Substituting them into the iteration upper bounds for gradient-based optimization methods~\citep{kitaoka2024exact}, we evaluated the number of iterations sufficient to achieve exact consistency with the observed data as an explicit function of the sample size, the dimension, the feature ranges, and the structure of the constraint matrix, up to polynomial factors in basic constants. Similarly, we gave an explicit bound on the number of iterations required to attain the minimum of the prediction loss of features (PLF). The obtained lower bounds show that the number of iterations is bounded by a polynomial in the problem size when structure can be exploited, while for general ILPs they can become exponentially small in the dimension; the latter is consistent with the NP-hardness of inverse optimization with noisy data established by \citet{aswani2018inverse}. In fact \Cref{prop:tightness} exhibits an explicit family of ILP instances on which $\gamma(\ell_{\mathrm{sub}})$ is exponentially small in $d$; hence no lower bound on $\gamma(\ell_{\mathrm{sub}})$ polynomial in the problem size can hold for general ILPs, and the exponential dependence of the iteration bound on $d$ is genuine. Sharpening the general ILP lower bound and improving its $N^{d}$ dependence are left for future work.%

\section*{Acknowledgement }
    We also thank GPT-5.2, GPT-5.4, Opus 4.7, and Opus 4.8, Fable 5 for their assistance with proofreading the manuscript.

\bibliographystyle{apalike} 
\bibliography{suboptimality_loss.bib} 


\newpage

\appendix
\crefalias{section}{appendix}

\section{Regret analysis for the suboptimality loss}
\label{sec:known_regret_analysis}

Let $\Theta$ be a nonempty subset of $\mathbb{R}^d$.
Let $\ell\colon\Theta\to\mathbb{R}$ be a loss function.
Consider a sequence $\{\theta^t\}_{t=1}^T\subset\Theta$.
We define the regret by
\[
    \mathrm{Regret}(T)
    = \sum_{t=1}^T \left(\ell(\theta^{t}) - \min_{\theta\in\Theta}\ell(\theta)\right) \, .
\]
In what follows, we review existing work on regret analyses for the suboptimality loss.
The results are summarized in \cref{tab:pro_con_SL_regret_detail}.

\begin{table}[ht]
\vspace{-\intextsep} 
    \caption{Performance comparison of methods for solving the DDIOP for MILPs (\cref{eq:IOP_linear}).
    The integer $T$ denotes the number of iterations or the number of optimization calls.
    The integer $t$ denotes the iteration index or the optimization-call index.}
    \label{tab:pro_con_SL_regret_detail}
    \centering
    \begin{tabular}{p{3cm}|p{9cm}}
        \toprule
        Method & Regret for the suboptimality loss \\
        \midrule
        MWU \citep{arora2012multiplicative} & $O \left( L(\ell_{\mathrm{sub}}) \log d  \sqrt{T} \right)$, provided that $\Theta = \Delta^{d-1}$ \citep[Theorem 3.5]{Barmann-2018-online} \\
        \hline
        PSGD (online gradient descent) & $O \left( \mathrm{diam} (\Theta) L(\ell_{\mathrm{sub}}) \sqrt{T} \right)$ \citep[Theorem 3.11]{Barmann-2018-online} \\
        (with step size &  \\
        $\mathrm{diam} (\Theta) L(\ell_{\mathrm{sub}})^{-1} t^{-1/2}$) &  \\
        \hline
        \citet{besbes2021online,besbes2025contextual} & $O \left( d^4 \log T  \right)$, provided that $\Theta$ is the unit sphere and $L(\ell_{\mathrm{sub}})\leq 1$\\
        \hline
        \citet{gollapudi2021contextual} & $O \left( d \log T \right)$, provided that $\Theta$ is the unit ball and $L(\ell_{\mathrm{sub}})\leq 1$ \\
         & $O \left( d^{2(d+1)} \right)$, provided that $\Theta$ is the unit ball and $L(\ell_{\mathrm{sub}})\leq 1$ \\
        \hline
        ONS \citep{hazan2007logarithmic} & $O \left(\mathrm{diam} (\Theta) L(\ell_{\mathrm{sub}}) d\log (T/d) \right)$  \citep[Theorem 3.1]{sakaue2025online}\\
        \hline
        MetaGrad \citep{van2016metagrad,van2021metagrad} & $O \left( \mathrm{diam} (\Theta) L(\ell_{\mathrm{sub}}) d\log (T/d) \right)$ \citep[Theorem 4.1]{sakaue2025online} \\
        \bottomrule
        Cf.\ lower bound & $\Omega (d)$ \citep[\S 5]{sakaue2025online}\\
    \end{tabular}
    \vspace{-\intextsep} 
\end{table}

\section{Online and offline optimization}

Let $\Theta$ be a nonempty subset of $\mathbb{R}^d$.
Let $\ell\colon\Theta\to\mathbb{R}$ be a function.
Consider a sequence $\{\theta^t\}_{t=1}^T\subset\Theta$.
Then, the following holds:
\begin{equation}
    \min_{t=1,\ldots,T}
    \left(\ell(\theta^{t}) - \min_{\Theta}\ell\right)
    \leq
    \frac{1}{T}
    \sum_{t=1}^T \left(
        \ell(\theta^t) - \min_{\Theta}\ell
    \right)
    \leq \frac{\mathrm{Regret}(T)}{T}
    \, .
    \label{eq:offline-online-estimate}
\end{equation}

By applying the existing regret bounds reviewed in \cref{sec:known_regret_analysis} to \cref{eq:offline-online-estimate}, one can upper-bound the best-iterate performance for the suboptimality loss.
For the resulting bounds, see \cref{tab:pro_con_SL_detail}.

\section{Additional remarks on Assumption \ref{assu:WIRL}}

\begin{remark}\label{remark:mip}
We explain why it is preferable to exclude $0$ from the weight space $\Theta$.
At the origin $0$, one has $\ell_{\mathrm{sub}}(0)=0$, and thus $0$ attains the minimum value of the suboptimality loss.
Consequently, for the DDIOP for MILPs (\cref{eq:IOP_linear}), $\theta=0$ is a solution.

However, the optimizer $x^*(0,s^{(n)})$ may be any point in $X(s^{(n)})$; hence, unless $X(s^{(n)})$ is a singleton, the optimizer is not uniquely determined.
Therefore, even if the true weight were $\theta^*=0$, Assumption~\ref{assu:WIRL-uniqueness} would not be satisfied.
That is, even if learning returns $\theta=0$, such a $\theta$ does not coincide with the true weight $\theta^*$.
This issue is common to many IOPs for LPs, regardless of whether the setting is online or offline \citep{bertsimas2015data,Mohajerin-2018-Data,Barmann-2018-online,Chen-2020-Online,sun2023maximum,Kitaoka-2023-convergence-IRL,Kitaoka-2023-imitation-WIRL,kitaoka2024exact}.
\end{remark}

\end{document}